\documentclass{amsproc}
\usepackage[arrow, curve, frame, matrix, graph, tips]{xy}
\usepackage{amssymb}
\usepackage{amsthm}
\usepackage{amsmath}
\usepackage{enumerate}


\newtheorem{theorem}{Theorem}
\newtheorem{lemma}[theorem]{Lemma}
\newtheorem{proposition}[theorem]{Proposition}
\newtheorem{corollary}[theorem]{Corollary}

\theoremstyle{definition}
\newtheorem{definition}[theorem]{Definition}
\newtheorem{example}[theorem]{Example}

\newtheorem{non-example}[theorem]{Non-Example}

\newtheorem{question}[theorem]{Question}

\theoremstyle{remark}
\newtheorem{remark}[theorem]{Remark}

\newcommand{\tn}[1]{\textnormal{#1}}
\newcommand{\tnb}[1]{\textnormal{\bf #1}}

\newcommand{\C}{\mathbb{C}}
\newcommand{\D}{\mathbb{D}}

\newcommand{\N}{\mathbb{N}}

\newcommand{\id}{\tn{id}}
\newcommand{\ca}[1]{\mathcal{#1}}
\newcommand{\ladj}{\dashv}
\newcommand{\iso}{\cong}

\newcommand{\Set}{\tnb{Set}}

\newcommand{\CAT}{\tnb{CAT}}
\newcommand{\op}{\tn{op}}

\renewcommand{\iff}{\Leftrightarrow}


\DeclareMathOperator*{\Tbar}{\overline{T}}
\DeclareMathOperator*{\Sbar}{\overline{S}}

\DeclareMathOperator*{\opE}{E}
\DeclareMathOperator*{\opEpr}{E^{\prime}}
\DeclareMathOperator*{\opF}{F}

\DeclareMathOperator*{\opEr}{E^{(r)}}

\DeclareMathOperator*{\opEm}{E^{(m)}}
\DeclareMathOperator*{\opEmpone}{E^{(m{+}1)}}
\DeclareMathOperator*{\opEmptwo}{E^{(m{+}2)}}
\DeclareMathOperator*{\opErpone}{E^{(r{+}1)}}

\DeclareMathOperator*{\opEzero}{E^{(0)}}
\DeclareMathOperator*{\opEone}{E^{(1)}}
\DeclareMathOperator*{\colsum}{\colim_{r{<}m}\coprod}


\newcommand{\Enrich}[1]{#1{\textnormal{-Cat}}}

\newcommand{\col}{\tn{col}}

\DeclareMathOperator*{\colim}{\textnormal{colim}}

\newcommand{\Alg}[1]{#1{\textnormal{-Alg}}}

\newcommand{\MND}{\textnormal{MND}}
\newcommand{\OpMND}{\textnormal{OpMND}}

\newcommand{\DISTMULT}{\textnormal{DISTMULT}}
\newcommand{\OpDISTMULT}{\textnormal{OpDISTMULT}}

\newcommand{\con}{\tn{con}}

\begin{document}

\title{The Lifting Theorem for Multitensors}

\author{Michael Batanin}
\address{Department of Mathematics,
Macquarie University}
\email{mbatanin@ics.mq.edu.au}
\thanks{}
\author{Denis-Charles Cisinski}
\address{Departement des Mathematiques,
Universit\'e Paris 13 Villanteuse}
\email{cisinski@math.paris13.fr}
\thanks{}
\author{Mark Weber}
\address{Department of Mathematics,
Macquarie University}
\email{mark.weber.math@gmail.com}
\thanks{}
\maketitle
\begin{abstract}
We continue to develop the theory of \cite{EnHopI} and \cite{WebMultMnd} on monads and multitensors. The central result of this paper -- the lifting theorem for multitensors -- enables us to see the Gray tensor product of 2-categories and the Crans tensor product of Gray categories as part of our emerging framework. Moreover we explain how our lifting theorem gives an alternative description of Day convolution \cite{DayConvolution} in the unenriched context.
\end{abstract}
\tableofcontents

\section{Multitensors and functor operads}

This paper continues the developments of \cite{EnHopI} and \cite{WebMultMnd} on the interplay between monads and multitensors in the globular approach to higher category theory. To take an important example, according to \cite{WebMultMnd} there are two related combinatorial objects which can be used to describe the notion of Gray category. One has the monad $A$ on the category $\ca G^3(\Set)$ of 3-globular sets whose algebras are Gray categories, which was first described in \cite{Bat98}. On the other hand there is a multitensor (ie a lax monoidal structure) on the category $\ca G^2(\Set)$ of $2$-globular sets, such that categories enriched in $E$ are exactly Gray categories. The theory described in \cite{WebMultMnd} explains how $A$ and $E$ are related as part of a general theory which applies to all operads of the sort defined originally in \cite{Bat98}.

However there is a third object which is missing from this picture, namely, the Gray tensor product of 2-categories. It is a simpler object than $A$ and $E$, and categories enriched in $\Enrich 2$ for the Gray tensor product are exactly Gray categories. The purpose of this paper is to exhibit the Gray tensor product as part of our emerging framework. This is done by means of the lifting theorem for multitensors -- theorem(\ref{thm:lift-mult}) of this article.

Recall \cite{EnHopI,WebMultMnd} that a \emph{multitensor} $(E,u,\sigma)$ on a category $V$ consists of n-ary tensor product functors $E_n:V^n \to V$, whose values on objects are denoted in any of the following ways
\[ \begin{array}{ccccccc} {E(X_1,...,X_n)} && {E_n(X_1,...,X_n)} && {\opE\limits_{1{\leq}i{\leq}n} X_i} && {\opE\limits_i X_i} \end{array} \]
depending on what is most convenient, together with unit and substitution maps
\[ \begin{array}{lcr} {u_X:Z \rightarrow E_1X} && {\sigma_{X_{ij}}:\opE\limits_i\opE\limits_j X_{ij} \rightarrow \opE\limits_{ij} X_{ij}} \end{array} \]
for all $X$, $X_{ij}$ from $V$ which are natural in their arguments and satisfy the obvious unit and associativity axioms. It is also useful to think of $(E,u,\sigma)$ more abstractly as a lax algebra structure on $V$ for the monoid monad $M$ on $\CAT$, and so to denote $E$ as a functor $E:MV \to V$. The basic example to keep in mind is that of a monoidal structure on $V$, for in this case $E$ is given by the $n$-ary tensor products, $u$ is the identity and the components of $\sigma$ are given by coherence isomorphisms for the monoidal structure.

A \emph{category enriched in $E$} consists of a $V$-enriched graph $X$ together with composition maps
\[ \kappa_{x_i} : \opE\limits_i X(x_{i-1},x_i) \rightarrow X(x_0,x_n) \]
for all $n \in \N$ and sequences $(x_0,...,x_n)$ of objects of $X$, satisfying the evident unit and associativity axioms. With the evident notion of $E$-functor (see \cite{EnHopI}), one has a category $\Enrich E$ of $E$-categories and $E$-functors together with a forgetful functor
\[ U^E : \Enrich E \rightarrow \ca GV. \]
When $E$ is a distributive multitensor, that is when $E_n$ commutes with coproducts in each variable, one can construct a monad $\Gamma E$ on $\ca GV$ over $\Set$. The object map of the underlying endofunctor is given by the formula
\[ \Gamma EX(a,b) = \coprod\limits_{a=x_0,...,x_n=b} \opE\limits_iX(x_{i-1},x_i), \]
the unit $u$ is used to provide the unit of the monad and $\sigma$ is used to provide the multiplication. The identification of the algebras of $\Gamma E$ and categories enriched in $E$ is witnessed by a canonical isomorphism $\Enrich E \iso \ca G(V)^{\Gamma E}$ over $\ca GV$. This construction, the senses in which it is 2-functorial, and its respect of various categorical properties, is explained fully in \cite{WebMultMnd}. We use the notation and terminology of \cite{WebMultMnd} freely.

If one restricts attention to unary operations, then $E_1$, $u$ and the components $\sigma_X:E_1^2X \to E_1X$ provide the underlying endofunctor, unit, and multiplication for a monad on $V$. This monad is called the \emph{unary part} of $E$. When the unary part of $E$ is the identity monad, the multitensor is a \emph{functor operad}. This coincides with existing terminology, see \cite{McClureSmith} for instance, except that we don't in this paper consider any symmetric group actions. Since units for functor operads are identities, we denote any such as a pair $(E,\sigma)$, where as for general multitensors $E$ denotes the functor part and $\sigma$ the substitution.

By definition then, a functor operad is a multitensor. On the other hand, as observed in \cite{EnHopI} lemma(2.7), the unary part of a multitensor $E$ acts on $E$, in the sense that as a functor $E$ factors as
\[ \xygraph{{MV}="l" [r] {V^{E_1}}="m" [r] {V}="r" "l":"m"^-{}:"r"^-{U^{E_1}}} \]
and in addition, the substitution maps are morphisms of $E_1$-algebras. Moreover an $E$-category structure on a $V$-enriched graph $X$ includes in particular an $E_1$-algebra structure on each hom $X(a,b)$ of $X$ with respect to which the composition maps are morphisms of $E_1$-algebras. These observations lead to
\begin{question}\label{q:lift}
Given a multitensor $(E,u,\sigma)$ on a category $V$ can one find a functor operad $(E',\sigma')$ on $V^{E_1}$ such that $E'$-categories are exactly $E$-categories?
\end{question}
The main result of this paper, theorem(\ref{thm:lift-mult}), says that question(\ref{q:lift}) has a nice answer: when $E$ is distributive and accessible and $V$ is cocomplete, one can indeed find a \emph{unique} distributive accessible such $E'$. Moreover as we will see in section(\ref{sec:convolution}), this construction generalises Day convolution \cite{DayConvolution} and some of its lax analogues \cite{DS03}.

Perhaps the first appearance of a case of our lifting theorem in the literature, that does not involve convolution, is in the work of Ginzburg and Kapranov on Koszul duality \cite{GinKap}. Formula (1.2.13) of that paper, in the case of a $K$-collection $E$ coming from an operad, implicitly involves the lifting of the multitensor corresponding (as in \cite{EnHopI} example(2.6)) to the given operad. For instance, our lifting theorem gives a satisfying general explanation for why one must tensor over $K$ in that formula.

This paper is organised in the following way. The lifting theorem is proved in section(\ref{sec:lifting-theorem}), using some transfinite constructions from monad theory which are recalled in appendix(\ref{sec:transfinite}). The lifted functor operad is unpacked explicitly in section(\ref{sec:explicit-lifting}). In section(\ref{sec:Gray-Crans}) we explain how the Gray tensor product of 2-categories and Crans tensor product of Gray categories is obtained as a lifting via theorem(\ref{thm:lift-mult}). Part of the interplay between monads and multitensors described in \cite{WebMultMnd} covers contractible multitensors and their relation to the contractible operads of \cite{Bat98}. In section(\ref{sec:contractibility}) we extend this analysis to the lifted multitensors, and in example(\ref{ex:Gray-contractible}) explain how this gives a different proof of the contractibility of the operad for Gray categories. In section(\ref{sec:convolution}) we explain how Day convolution, in the unenriched setting, can also be obtained via our theorem(\ref{thm:lift-mult}).

\section{The lifting theorem}\label{sec:lifting-theorem}
The idea which enables us to answer question(\ref{q:lift}) is the following. Given a distributive multitensor $E$ on $V$ one can consider also the multitensor $\widetilde{E_1}$ whose unary part is also $E_1$, but whose non-unary parts are all constant at $\emptyset$. This is clearly a sub-multitensor of $E$, also distributive, and moreover as we shall see one has $\Enrich{\widetilde{E_1}} \iso \ca G(V^{E_1})$ over $\ca GV$. Thus from the inclusion $\widetilde{E_1} \hookrightarrow E$ one induces the forgetful functor $U$ fitting in the commutative triangle
\[ \xygraph{{\ca G(V^{E_1})}="l" [r(2)] {\Enrich E}="r" [dl] {\ca GV}="b" "l":@{<-}"r"^-{U}:"b"^-{U^E}:@{<-}"l"^-{\ca G(U^{E_1})}} \]
For sufficiently nice $V$ and $E$ this forgetful functor has a left adjoint. The category of algebras of the induced monad $T$ will be $\Enrich E$ since $U$ is monadic. Thus problem is reduced to that of establishing that this monad $T$ arises from a multitensor on $V^{E_1}$. Theorem(42) of \cite{WebMultMnd} gives the properties that $T$ must satisfy in order that there is such a multitensor, and gives an explicit formula for it in terms of $T$.

In the interplay between multitensors and monads described in \cite{WebMultMnd} the construction $E \mapsto \Gamma E$ of a monad on $\ca GV$ over $\Set$ from a distributive multitensor provides the object map of 2-functors
\[ \begin{array}{l} {\Gamma : \DISTMULT \to \MND(\CAT/\Set)} \\ {\Gamma' : \OpDISTMULT \to \OpMND(\CAT/\Set).} \end{array} \]
That the monads $(S,\eta,\mu)$ on $\ca GV$ that arise from this construction are ``over $\Set$'' means that for all $X \in \ca GV$, the $V$-graph $SX$ has the same object set as $X$, and the components of the unit $\eta$ and multiplication $\mu$ are identities on objects. Theorem(42) of \cite{WebMultMnd} alluded to above characterises the monads on $\ca GV$ over $\Set$ of the form $\Gamma E$ as those which are \emph{distributive} and \emph{path-like} in the sense of definitions(41) and (38) of \cite{WebMultMnd} respectively. Note that the properties of distributivity and path-likeness concern only the functor part of a given monad on $\ca GV$ over $\Set$. On the way to the proof of theorem(\ref{thm:lift-mult}) below, it is necessary to have available these definitions for functors over $\Set$ between categories of enriched graphs.

Suppose that categories $V$ and $W$ have coproducts. Recall that a finite sequence $(Z_1,...,Z_n)$ of objects of $V$ may be regarded as a $V$-graph whose object set is $\{0,...,n\}$, hom from $(i-1)$ to $i$ is $Z_i$ for $1 \leq i \leq n$, and other homs are initial. Then a functor $T:\ca GV \to \ca GW$ over $\Set$ determines a functor $\overline{T}:MV \to W$ whose object map is given by
\[ \overline{T}(Z_1,...,Z_n) = T(Z_1,...,Z_n)(0,n). \]
By definition $\overline{T}$ amounts to functors $\overline{T}_n:V^n \to W$ for each $n \in \N$, and one may consider the various categorical properties that such $\overline{T}$ may enjoy, as in the discussion of \cite{WebMultMnd} section(4.3).
\begin{definition}\label{def:distributive}
Let $V$ and $W$ be categories with coproducts. A functor $T:\ca GV \to \ca GW$ over $\Set$ is \emph{distributive} when for each $n \in \N$, $\overline{T}_n$ preserves coproducts in each variable.
\end{definition}
\noindent Given a $V$-graph $X$ and sequence $x=(x_0,...,x_n)$ of objects of $X$, one can define the morphism
\[ \overline{x} : (X(x_0,x_1),X(x_1,x_2),...,X(x_{n-1},x_n)) \to X \]
whose object map is $i \mapsto x_i$, and whose hom map between $(i-1)$ and $i$ is the identity. For all such sequences $x$ one has
\[ T(\overline{x})_{0,n} : \Tbar\limits_i X(x_{i-1},x_i) \to TX(x_0,x_n) \]
and so taking all sequences $x$ starting at $a$ and finishing at $b$ one induces the canonical map
\[ \pi_{T,X,a,b} : \coprod\limits_{a=x_0,...,x_n=b} \Tbar\limits_i X(x_{i-1},x_i) \rightarrow TX(a,b) \]
in $W$.
\begin{definition}\label{def:path-like}
Let $V$ and $W$ be categories with coproducts. A functor $T:\ca GV \to \ca GW$ over $\Set$ is \emph{path-like} when for all $X \in \ca GV$ and $a,b \in X_0$, the maps $\pi_{T,X,a,b}$ are isomorphisms.
\end{definition}
\noindent Clearly a monad $(T,\eta,\mu)$ on $\ca GV$ over $\Set$ is distributive (resp. path-like) in the sense of \cite{WebMultMnd} iff the underlying endofunctor $T$ is so in the sense just defined.
\begin{lemma}\label{lem:dpl}
Let $V$, $W$ and $Y$ be categories with coproducts and $R:V \to W$, $T:\ca GV \to \ca GW$ and $S:\ca GW \to \ca GY$ be functors.
\begin{enumerate}
\item If $R$ preserves coproducts then $\ca GR$ is distributive and path-like.\label{lemcase:dpl-unary}
\item If $S$ and $T$ are distributive and path-like, then so is $ST$.\label{lemcase:dpl-composition}
\end{enumerate}
\end{lemma}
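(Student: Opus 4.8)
The plan is to prove both parts directly from the definitions of distributive (Definition~\ref{def:distributive}) and path-like (Definition~\ref{def:path-like}), by analysing the induced functors $\overline{T}_n$ and the canonical comparison maps $\pi_{T,X,a,b}$. The essential observation throughout is that all the functors in sight lie over $\Set$, so they fix object sets and act only on homs; this lets me reduce every claim to a statement about the $n$-ary hom-functors $\overline{T}_n : V^n \to W$ and about how the maps $\overline{x}$ from sequences behave.

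For part~(\ref{lemcase:dpl-unary}), I would first identify $\overline{(\ca GR)}_n$ explicitly. Since $\ca GR$ applies $R$ hom-wise and leaves object sets untouched, for a sequence $(Z_1,\dots,Z_n)$ viewed as a $V$-graph on $\{0,\dots,n\}$, the graph $\ca GR(Z_1,\dots,Z_n)$ has hom from $i-1$ to $i$ equal to $RZ_i$ and all other homs equal to $R(\emptyset)$. Evaluating at $(0,n)$ gives $\overline{(\ca GR)}(Z_1,\dots,Z_n) = \ca GR(Z_1,\dots,Z_n)(0,n)$, which for $n \geq 2$ is $R(\emptyset)$, for $n=1$ is $RZ_1$, and for $n=0$ is $R(\emptyset)$. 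Distributivity then holds because each $\overline{(\ca GR)}_n$ is either $R$ itself (preserving coproducts by hypothesis) or a constant functor at $R(\emptyset)$, which trivially preserves coproducts in each variable. For path-likeness I would compute $\pi_{\ca GR,X,a,b}$ and check it is an isomorphism: the coproduct on the left is indexed by sequences $a=x_0,\dots,x_n=b$, and $\overline{(\ca GR)}_i X(x_{i-1},x_i)$ is the hom-wise image, so the comparison map is precisely $R$ applied hom-wise to the analogous isomorphism exhibiting $X$ itself as a path-colimit; since $R$ preserves the relevant coproducts and the identity component dominates, this is an isomorphism. Concretely, the only surviving summand on each side matches up, giving the isomorphism.

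For part~(\ref{lemcase:dpl-composition}), the plan is to exploit path-likeness of $T$ to rewrite $\overline{(ST)}_n$ in terms of $\overline{S}$ and $\overline{T}$, and then invoke the hypotheses. The key computation is that $\overline{(ST)}(Z_1,\dots,Z_n) = STX(0,n)$ where $X=(Z_1,\dots,Z_n)$; applying path-likeness of $S$ to the graph $TX$ rewrites $STX(0,n)$ as a coproduct over sequences in $TX$ of terms $\overline{S}_k$ of homs of $TX$, and each such hom $TX(p,q)$ is in turn, by path-likeness of $T$, a coproduct of $\overline{T}$-values. Distributivity of $\overline{S}$ then lets me push the $\overline{S}_k$ past the coproduct coming from $T$, and the combined indexing reorganises into sequences that factor through the sequence $0,\dots,n$. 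This yields a formula for $\overline{(ST)}_n$ as a coproduct of composites of $\overline{T}$'s followed by $\overline{S}$'s; distributivity of each factor then gives distributivity of the composite, and a careful bookkeeping of the indexing sets shows $\pi_{ST,X,a,b}$ factors as a composite of (iso) comparison maps for $S$ and $T$, hence is an isomorphism.

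The main obstacle I expect is the bookkeeping in part~(\ref{lemcase:dpl-composition}): correctly identifying the indexing set for $\pi_{ST,X,a,b}$ and showing it splits as the composite $\pi_{S}\circ(\text{something built from }\pi_{T})$ compatibly, so that the two isomorphisms assemble into one. This amounts to a Fubini-style interchange of coproducts justified by distributivity of $\overline{S}$, together with the observation that a sequence of objects in $TX$ from $a$ to $b$ (with $TX$ over $\Set$, so its objects are those of $X$) refines to a sequence in $X$ from $a$ to $b$; making the matching of summands precise, rather than merely plausible, is where the real care is needed. The distributivity claim itself should follow more routinely once the composite formula for $\overline{(ST)}_n$ is in hand, since preservation of coproducts in each variable is stable under the composites and coproducts involved.
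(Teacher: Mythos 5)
Your overall strategy is the same as the paper's: for part~(\ref{lemcase:dpl-unary}) you identify the unary part of $\overline{\ca GR}$ as $R$ and the non-unary parts as constants, and for part~(\ref{lemcase:dpl-composition}) you derive, from path-likeness and distributivity of $S$ and $T$ together with a Fubini-style interchange of coproducts, exactly the formula
\[ ST(Z_1,\dots,Z_n)(0,n) \iso \coprod_{0=r_0\leq\dots\leq r_m=n} \Sbar_{1\leq i\leq m}\,\,\Tbar_{r_{i-1}<j\leq r_i} Z_j \]
that the paper's proof rests on, and then conclude both properties by assembling the relevant comparison isomorphisms. Your part~(\ref{lemcase:dpl-composition}) is correct, and the ``careful bookkeeping'' you defer is precisely what the paper compresses into the phrase ``a coproduct of coproducts is a coproduct''.

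There is, however, one step in part~(\ref{lemcase:dpl-unary}) that is false as written: a constant functor at an object $C$ does \emph{not} ``trivially'' preserve coproducts in each variable --- the canonical map $\coprod_j C \to C$ is an isomorphism only when $C$ is initial (already the empty coproduct forces $C \iso \emptyset$). So your distributivity argument for the non-unary parts of $\overline{\ca GR}$, which you leave at ``constant at $R(\emptyset)$, hence trivially fine'', has a hole; and the same hole recurs in your path-likeness argument, since the summands of $\pi_{\ca GR,X,a,b}$ indexed by sequences of length $n \neq 1$ are copies of $R(\emptyset)$, and they fail to ``survive'' only if $R(\emptyset)$ is initial. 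What repairs both is the observation that $R$ preserves the \emph{empty} coproduct, so $R(\emptyset) \iso \emptyset$; this is exactly where the hypothesis is used for the non-unary parts, and it is the content of the paper's remarks that $\ca GR(Z_1,\dots,Z_n) = (RZ_1,\dots,RZ_n)$ and that coproducts of copies of $\emptyset$ are initial. With that one line added, your part~(\ref{lemcase:dpl-unary}) is complete and agrees with the paper's proof.
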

\begin{proof}
(\ref{lemcase:dpl-unary}): Since $R$ preserves the initial object one has $\ca GR(Z_1,...,Z_n) = (RZ_1,...,RZ_n)$ and so $\overline{\ca GR}:MV \to W$ sends sequences of length $n \neq 1$ to $\emptyset$, and its unary part is just $R$. Thus $\ca GR$ is distributive since $R$ preserves coproducts, and coproducts of copies of $\emptyset$ are initial. The summands of the domain of $\pi_{\ca GR,X,a,b}$ are initial unless $(x_0,...,x_n)$ is the sequence $(a,b)$, thus $\pi_{\ca GR,X,a,b}$ is clearly an isomorphism, and so $\ca GR$ is path-like.

(\ref{lemcase:dpl-composition}): Since $S$ and $T$ are path-like and distributive one has
\[ ST(Z_1,...,Z_n)(0,n) \iso \coprod\limits_{0=r_0{\leq}...{\leq}r_m=n} \Sbar\limits_{1{\leq}i{\leq}m} \,\,\, \Tbar\limits_{r_{i-1}{<}j{\leq}r_i} Z_j  \]
and so $ST$ is path-like and distributive since $S$ and $T$ are, and since a coproduct of coproducts is a coproduct. 
\end{proof}
In order to implement the above strategy, we must know something about the explicit description of the left adjoint $\ca G(V^{E_1}) \to \Enrich E$. Since this may be regarded as arising from the monad morphism $\phi$ induced by the inclusion $\tilde{E_1} \hookrightarrow E$ via $\Gamma$, the well-known construction of $\phi_!$ reviewed in section(\ref{ssec:Dubuc}) is what we need. The key point about this construction is that it proceeds via a transfinite process involving only \emph{connected} colimits. Moreover in the context that we shall soon consider, these will be connected colimits of diagrams of $V$-graphs which live wholly within a single fibre of $(-)_0:\ca GV \to \Set$. By remark(21) of \cite{WebMultMnd}, such colimits in $\ca GV$ may be computed by taking the same object set, and computing the colimit one hom at a time in the expected way. The importance of this is underscored by
\begin{lemma}\label{lem:concol-pathlike}
Let $V$ be a category with coproducts, $W$ be a cocomplete category, $J$ be a small connected category and
\[ F : J \rightarrow [\ca GV,\ca GW] \]
be a functor. Suppose that $F$ sends objects and arrows of $J$ functors and natural transformations over $\Set$.
\begin{itemize}
\item[(1)]  Then the colimit $K:\ca GV{\rightarrow}\ca GW$ of $F$ may be chosen to be over $\Set$.\label{cpl1}
\end{itemize}
Given such a choice of $K$:
\begin{itemize}
\item[(2)]  If $Fj$ is path-like for all $j \in J$, then $K$ is also path-like.\label{cpl2}
\item[(3)]  If $Fj$ is distributive for all $j \in J$, then $K$ is also distributive.\label{clp3}
\end{itemize}
\end{lemma}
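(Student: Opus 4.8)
The plan is to prove the three parts in the order they are stated, exploiting the fact that all the functors in sight are over $\Set$ and that the colimit can therefore be computed fibrewise.

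For part (1), I would use remark(21) of \cite{WebMultMnd}: a connected colimit of a diagram of $V$-graphs all sharing the same object set (or, more precisely, where all the transition maps are identities on objects) can be computed by fixing the object set and taking the colimit hom-by-hom in $\ca GW$. Concretely, for a fixed $X \in \ca GV$ the diagram $j \mapsto (Fj)X$ lives in a single fibre of $(-)_0:\ca GW \to \Set$, since each $Fj$ is over $\Set$ and each transition natural transformation has identity-on-objects components; hence its colimit $KX$ may be chosen with $(KX)_0 = X_0$ and with $KX(a,b) = \colim_{j} (Fj)X(a,b)$ computed in the cocomplete category $W$. One then checks that this assignment is functorial in $X$ and that the resulting $K$ is genuinely the colimit of $F$ in $[\ca GV,\ca GW]$; the point is that a colimit in a functor category is computed pointwise, and at each object $X$ we have just exhibited the pointwise colimit as lying over $\Set$. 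So $K$ is over $\Set$.

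With $K$ fixed as above, part (3) is the easier of the remaining two, so I would do it next. Distributivity asks that each $\overline{K}_n$ preserve coproducts in each variable. But $\overline{K}_n(Z_1,\dots,Z_n) = KX(0,n)$ for the appropriate sequence-graph $X$, and by the fibrewise formula this equals $\colim_j \overline{Fj}_n(Z_1,\dots,Z_n)$. Since each $Fj$ is distributive, each $\overline{Fj}_n$ preserves coproducts in each variable, and coproducts commute with the connected colimit over $j$ (colimits commute with colimits), so $\overline{K}_n$ preserves them too. The only subtlety is bookkeeping to make sure the colimit and the coproduct are being taken in compatible ways, which is routine.

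Part (2) is where I expect the main obstacle, because path-likeness is a statement about the canonical maps $\pi_{K,X,a,b}$ being isomorphisms, and one must verify that forming the colimit $K$ is compatible with the formation of these canonical comparison maps. The strategy is to show that $\pi_{K,X,a,b}$ is the colimit over $j$ of the maps $\pi_{Fj,X,a,b}$. Using the fibrewise description of $K$, the codomain $KX(a,b)$ is $\colim_j (Fj)X(a,b)$; for the domain one needs that the coproduct over sequences $x$ of $\Kbar_i X(x_{i-1},x_i)$ agrees with $\colim_j$ of the corresponding coproduct for $Fj$, which again follows from colimits commuting with colimits together with the fibrewise formula for $\overline{K}$. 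The genuinely delicate point is the naturality check that these identifications carry the comparison map $\pi_{K,X,a,b}$ to $\colim_j \pi_{Fj,X,a,b}$; this requires tracing through the definition of $\pi$ in terms of the maps $\overline{x}$ and $T(\overline{x})_{0,n}$ and confirming that everything is induced compatibly as $j$ varies. Once that identification is in place, each $\pi_{Fj,X,a,b}$ is an isomorphism by hypothesis, a colimit of isomorphisms is an isomorphism, and $\pi_{K,X,a,b}$ is an isomorphism, giving path-likeness of $K$.
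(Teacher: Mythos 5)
Your proposal is correct and follows essentially the same route as the paper: compute the colimit over $\Set$ fibrewise via remark(21) of \cite{WebMultMnd}, so that homs of $KX$ are colimits in $W$, and then deduce (2) and (3) from the fact that colimits commute with colimits (your identification of $\pi_{K,X,a,b}$ with $\colim_j \pi_{Fj,X,a,b}$ is exactly the detail the paper compresses into ``follow immediately''). The only differences are cosmetic: you treat (3) before (2) and spell out the compatibility checks that the paper leaves to the reader.
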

\begin{proof}
Colimits in $[\ca GV,\ca GW]$ are computed componentwise from colimits in $\ca GW$ and so for $X \in \ca GV$ we must describe a universal cocone with components
\[ \kappa_{X,j} : Fj(X) \rightarrow KX. \]
By remark(21) of \cite{WebMultMnd} we may demand that the $\kappa_{X,j}$ are identities on objects, and then compute the hom of the colimit between $a,b \in X_0$ by taking a colimit cocone
\[ \{\kappa_{X,j}\}_{a,b} : Fj(X)(a,b) \rightarrow KX(a,b) \]
in $W$. This establishes (1). Since the properties of path-likeness and distributivity involve only colimits at the level of the homs as does the construction of $K$ just given, (2) and (3) follow immediately since colimits commute with colimits in general.
\end{proof}
Recall the structure-semantics result of Lawvere, which says that for any category $\ca E$, the canonical functor
\[ \begin{array}{lccccc} {\tn{Mnd}(\ca E)^{\op} \to \CAT/\ca E} &&& {T} & {\mapsto} & {U:\ca E^T \to \ca E} \end{array} \]
with object map indicated is fully faithful (see \cite{Str72} for a proof). An important consequence of this is that for monads $S$ and $T$ on $\ca E$, an isomorphism $\ca E^T \iso \ca E^S$ over $\ca E$ is induced by a unique isomorphism $S \iso T$ of monads. We now have all the pieces we need to implement our strategy. First, in the following lemma, we give the result we need to recognise the induced monad on $\ca G(V^{E_1})$ as arising from a multitensor.
\begin{lemma}\label{lem:mnd-lift-mult}
Let $\lambda$ be a regular cardinal. Suppose that $V$ is a cocomplete category, $R$ is a coproduct preserving monad on $V$, $S$ is a $\lambda$-accessible monad on $\ca GV$ over $\Set$, and $\phi:\ca GR{\rightarrow}S$ is a monad morphism over $\Set$. Denote by $T$ the monad on $\ca G(V^R)$ induced by $\phi_! \ladj \phi^*$.
\begin{itemize}
\item[(1)]  One may choose $\phi_!$ so that $T$ is over $\Set$.
\end{itemize}
Given such a choice of $\phi_!$:
\begin{itemize}
\item[(2)]  If $S$ is distributive and path-like then so is $T$.
\item[(3)]  If $R$ is $\lambda$-accessible then so is $T$.
\end{itemize}
\end{lemma}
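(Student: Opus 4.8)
We have a monad morphism $\phi: \mathcal{G}R \to S$ over $\mathbf{Set}$, where $\mathcal{G}R$ is distributive and path-like (by Lemma dpl part 1, since $R$ preserves coproducts), and $S$ is distributive, path-like, accessible. We form the adjunction $\phi_! \dashv \phi^*$ between algebra categories.

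$(\mathcal{G}V)^{\mathcal{G}R} \cong \mathcal{G}(V^R)$, and $(\mathcal{G}V)^S = \text{Enr}(E)$ essentially. The induced monad $T$ lives on $\mathcal{G}(V^R)$.

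We need to prove three things:
1. $\phi_!$ can be chosen so $T$ is over $\mathbf{Set}$
2. If $S$ is dist+path-like, so is $T$
3. If $R$ is $\lambda$-accessible, so is $T$

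**Key tool: $\phi_!$ via transfinite construction.**

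The excerpt says $\phi_!$ is constructed "via a transfinite process involving only connected colimits" — this is the Dubuc construction (referenced in section on transfinite constructions). These are connected colimits of $V$-graphs within a single fibre of $(-)_0: \mathcal{G}V \to \mathbf{Set}$.

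**Critical fact from the excerpt:** By remark(21) of WebMultMnd, such connected colimits in $\mathcal{G}V$ are computed with the same object set, one hom at a time.

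**Also crucial:** Lemma concol-pathlike says that connected colimits of functors over $\mathbf{Set}$ that are each path-like/distributive yield a colimit that is path-like/distributive.

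Let me think about what $\phi_!$ is.\textbf{Strategy.} The three claims are really claims about the endofunctor part of the monad $T$, so I would reduce everything to understanding the left adjoint $\phi_!:\ca G(V^R){\rightarrow}(\ca GV)^S$ explicitly. The key input, flagged in the surrounding text, is that $\phi_!$ is built by the Dubuc-style transfinite construction reviewed in appendix(\ref{sec:transfinite}), which produces $\phi_!$ as a transfinite composite of pushouts and of \emph{connected} colimits. Because $\phi$ is a monad morphism over $\Set$, and because $S$ and $\ca GR$ are over $\Set$, every diagram arising in that construction lives within a single fibre of $(-)_0:\ca GV{\rightarrow}\Set$, so by remark(21) of \cite{WebMultMnd} its colimit may be computed with a fixed object set, one hom at a time. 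This is exactly the hypothesis of lemma(\ref{lem:concol-pathlike}), and it is the engine that drives all three parts.

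\textbf{Part (1).} I would argue that at each stage of the transfinite construction the object set is preserved, so one may choose the colimiting cocones to be identities on objects (invoking remark(21) of \cite{WebMultMnd} and part (1) of lemma(\ref{lem:concol-pathlike}) at the connected-colimit steps, and checking directly that the pushout steps, being pushouts of identity-on-objects maps along identity-on-objects maps, stay over $\Set$). Transfinite composites of such maps are again over $\Set$. Since $\phi^*$ is manifestly over $\Set$, the induced monad $T=\phi^*\phi_!$ is then over $\Set$. The mild subtlety is the word ``may choose'': one must fix a single coherent choice of colimits throughout the transfinite process, which is harmless since any two choices are canonically isomorphic.

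\textbf{Parts (2) and (3).} For (2), I would show that path-likeness and distributivity are \emph{created} through the construction. The base of the transfinite construction is built from $S$ (which is distributive and path-like by hypothesis) and from $\ca GR$ (which is distributive and path-like by lemma(\ref{lem:dpl})(\ref{lemcase:dpl-unary}), as $R$ preserves coproducts); composites of such functors remain distributive and path-like by lemma(\ref{lem:dpl})(\ref{lemcase:dpl-composition}). The connected-colimit steps then preserve both properties by parts (2) and (3) of lemma(\ref{lem:concol-pathlike}), and I would check that the pushout steps preserve them for the same reason (a pushout is a particular connected colimit, and since everything is computed hom-by-hom, the relevant $\pi$-maps remain isomorphisms because colimits commute with the coproducts defining path-likeness). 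Passing to the transfinite colimit, a final application of lemma(\ref{lem:concol-pathlike}) gives that $T$ is distributive and path-like. For (3), $\lambda$-accessibility of $T$ follows because $S$ is $\lambda$-accessible by hypothesis, $R$ (hence $\ca GR$) is $\lambda$-accessible by the hypothesis of (3), and the Dubuc construction for a morphism of $\lambda$-accessible monads terminates at stage $\lambda$ and is assembled from $\lambda$-small connected colimits, which preserve $\lambda$-accessibility; I would cite the accessibility statements recalled in appendix(\ref{sec:transfinite}).

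\textbf{Main obstacle.} The delicate point is verifying that the specific colimits appearing in the Dubuc transfinite construction are genuinely \emph{connected}, so that lemma(\ref{lem:concol-pathlike}) applies and the computation really proceeds hom-by-hom with a fixed object set. Once connectedness is in hand the three conclusions fall out almost formally, but establishing it requires unwinding the precise shape of the diagrams in appendix(\ref{sec:transfinite}) and confirming that the monad morphism $\phi$ being over $\Set$ forces each such diagram into a single fibre of $(-)_0$.
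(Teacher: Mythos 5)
Your overall strategy is essentially the paper's: reduce everything to the transfinite (Dubuc) construction of $U^S\phi_!$ recalled in appendix(\ref{sec:transfinite}), observe that since $\phi$, $S$ and $\ca GR$ are over $\Set$ every colimit arising in that construction is a connected colimit (in fact only coequalisers and chain colimits occur --- there are no pushout steps to worry about) computed hom-wise within a single fibre of $(-)_0:\ca GV \to \Set$ by remark(21) of \cite{WebMultMnd}, and then run transfinite inductions using lemma(\ref{lem:dpl}) and lemma(\ref{lem:concol-pathlike}). Modulo one gloss (you should note explicitly that, because $R$ preserves coproducts, $U^R$ creates them, so that $T$ is distributive and path-like iff $\ca G(U^R)T = U^S\phi_!$ is), this correctly handles part (2) and the endofunctor content of part (1).

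The genuine gap is your opening claim that ``the three claims are really claims about the endofunctor part of the monad $T$.'' That is true of distributivity and path-likeness, but false for part (1): for a monad to be \emph{over} $\Set$, the components of its unit $\eta^T$ and multiplication $\mu^T$ must also be identities on objects, and nothing in your argument addresses this. The paper closes exactly this point at the end of its proof: since $\mu^T$ is a retraction of $\eta^TT$, it suffices to check that $\eta^T$ is over $\Set$, and this follows from the commutative square relating $\ca G(U^R)\eta^T$ to the transfinite composite $q:S\ca G(U^R) \to U^S\phi_!$, namely $q\cdot(\phi\ca G(U^R)) = (\ca G(U^R)\eta^T)\cdot\ca G\rho$, all of whose other sides are over $\Set$ by definition or by construction. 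A second, smaller, gap is in part (3): you write ``$R$ (hence $\ca GR$) is $\lambda$-accessible,'' but this ``hence'' is not automatic --- $\lambda$-filtered colimits in $\ca GV$ do not live in a single fibre of $(-)_0$, so the hom-wise reasoning does not apply to them; the paper proves $\ca GR$ is $\lambda$-accessible by identifying $\ca GR \iso \Gamma\tilde{R}$ for the multitensor $\tilde{R}$ whose unary part is $R$ and whose other parts are constant at $\emptyset$, and then appealing to theorem(29) of \cite{WebMultMnd}. With these two points repaired, your argument coincides with the paper's.
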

\begin{proof}
Let us denote by $\rho:RU^R \to U^R$ the 2-cell datum of the Eilenberg-Moore object for $R$, and note that by \cite{WebMultMnd} lemma(16) one may identify $U^{\ca GR}=\ca G(U^R)$ and $\ca G\rho$ as the 2-cell datum for $\ca GR$'s Eilenberg-Moore object. Now $T$ is over $\Set$ iff $\ca G(U^R)T$ is. Moreover since $R$ preserves coproducts $U^R$ creates them, and so $T$ is path-like and distributive iff $\ca G(U^R)T$ is. Since $\ca G(U^R)T = U^S\phi_!$, it follows that $T$ is over $\Set$, path-like and distributive iff $U^S\phi_!$ is. From corollary(\ref{cor:explicit-phi-shreik}) the beginning of the transfinite construction in $[\ca G(V^R),\ca GV]$ giving $U^S\phi_!$ is depicted in
\[ \xygraph{!{0;(1.5,0):(0,.666)::} {S^3\ca G(RU^R)}="tA" [r(2)] {S^3\ca G(U^R)}="tB" [r] {S^2Q_1}="t0" [r] {S^2Q_2}="t1" [r] {...}="t2" 
"tA" [d] {S^2\ca G(RU^R)}="mA" [r(2)] {S^2\ca G(U^R)}="mB" [r] {SQ_1}="m0" [r] {SQ_2}="m1" [r] {...}="m2"
"mA" [d] {S\ca G(RU^R)}="bA" [r(2)] {S\ca G(U^R)}="bB" [r] {Q_1}="b0" [r] {Q_2}="b1" [r] {...}="b2"
"tA":@<-1ex>"tB"_-{S^3\ca G(\rho)} "tA":@<1ex>"tB"^-{S^2\mu^SS(\phi)\ca G(U^R)}:"t0"^-{}:"t1"^-{}:"t2"^-{}
"mA":@<-1ex>"mB"_-{S^2\ca G(\rho)} "mA":@<1ex>"mB"^-{S\mu^SS(\phi)\ca G(U^R)}:"m0"^-{}:"m1"^-{}:"m2"^-{}
"bA":@<-1ex>"bB"_-{S\ca G(\rho)} "bA":@<1ex>"bB"^-{\mu^SS(\phi)\ca G(U^R)}:"b0"_-{}:"b1"_-{}:"b2"_-{}
"tA":@<-1ex>"mA"_-{}:@<-1ex>"bA"_-{} "tA":@<1ex>"mA"^-{Ta}:@<1ex>@{<-}"bA"^-{}
"tB":@<-1ex>"mB"_-{}:@<-1ex>"bB"_-{} "tB":@<1ex>"mB"^-{}:@<1ex>@{<-}"bB"^-{}
"t0":"m0"^-{}:@{<-}"b0"^-{} "t1":"m1"^-{}:@{<-}"b1"^-{}
"tB":"m0"^-{} "mB":"b0"^-{} "t0":"m1"^-{} "m0":"b1"^-{} "t1":"m2"^-{} "m1":"b2"^-{}} \]
Since the monads $S$ and $\ca G(R)$ are over $\Set$, as are $\rho$ and $\phi$, it follows by a transfinite induction using lemma(\ref{lem:concol-pathlike}) that all successive stages of this construction give functors and natural transformations over $\Set$, whence $U^S\phi_!$ is itself over $\Set$. Lemma(\ref{lem:dpl}) ensures that the functors $\ca G(R)$ and $\ca G(RU^R)$ are distributive and path-like, since $R$ preserves coproducts and $U^R$ creates them. When $S$ is also distributive and path-like, then by the same sort of transfinite induction using lemmas(\ref{lem:dpl}) and (\ref{lem:concol-pathlike}), all successive stages of this construction give functors that are distributive and path-like, whence $U^S\phi_!$ is itself distributive and path-like.

Supposing $R$ to be $\lambda$-accessible, note that $\ca G(R)$ is also $\lambda$-accessible. One way to see this is to consider the distributive multitensor $\tilde{R}$ on $V$ whose unary part is $R$ and non-unary parts are constant at $\emptyset$. Thus $\tilde{R}$ will be $\lambda$-accessible since $R$ is. To give an $\tilde{R}$-category structure on $X \in \ca GV$ amounts to giving $R$-algebra structures to the homs of $X$, and similarly on morphisms, whence one has a canonical isomorphism $\Enrich{\tilde{R}} \iso \ca G(V^R)$ over $\ca GV$. By proposition(26) of \cite{WebMultMnd} together with structure-semantics one obtains $\Gamma \tilde{R} \iso \ca GR$. Thus by \cite{WebMultMnd} theorem(29), $\ca GR$ is indeed $\lambda$-accessible. But then it follows that $U^{\ca GR} = \ca G(U^R)$ creates $\lambda$-filtered colimits, and so $T$ is $\lambda$-accessible iff $\ca G(U^R)T = U_S\phi_!$ is. In the transfinite construction of $U^S\phi_!$, it is now clear that the functors involved at every stage are $\lambda$-accessible by yet another transfinite induction, and so $U_S\phi_!$ is $\lambda$-accessible as required.

To finish the proof we must check that $T$'s monad structure is over $\Set$. Since $\mu^T$ is a retraction of $\eta^TT$ it suffices to verify that $\eta^T$ is over $\Set$, which is equivalent to asking that the components of $\ca G(U^R)\eta^T$ are identities on objects. Writing $q:S{\rightarrow}U^S\phi_{!}$ for the transfinite composite constructed as part of the definition of $\phi_!$ recall from the end of section(\ref{ssec:Dubuc}) that one has a commutative square
\[ \xymatrix{{\ca G(RU^R)} \ar[r]^-{\ca G\rho} \ar[d]_{{\phi}\ca G(U^R)} & {\ca GU^R} \ar[d]^{\ca G(U^R)\eta^T} \\ {S\ca G(U^R)} \ar[r]_-{q} & {U^S\phi_{!}}} \]
Now $\ca G\rho$ and $\phi \ca G(U^R)$ are over $\Set$ by definition, and $q$ is by construction, so the result follows.
\end{proof}
\begin{theorem}\label{thm:lift-mult}
\emph{({\bf Multitensor lifting theorem})} Let $\lambda$ be a regular cardinal and let $E$ be a $\lambda$-accessible distributive multitensor on a cocomplete category $V$. Then there is, to within isomorphism, a unique functor operad $(E',\sigma')$ on $V^{E_1}$ such that
\begin{enumerate}
\item  $(E',\sigma')$ is distributive.
\item  $\Enrich {E'} \iso \Enrich E$ over $\ca GV$.
\end{enumerate}
Moreover $E'$ is also $\lambda$-accessible.
\end{theorem}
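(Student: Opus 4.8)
The plan is to obtain $(E',\sigma')$ by specialising Lemma~\ref{lem:mnd-lift-mult} to the data arising from $E$, to recognise the resulting monad as a multitensor via the monad--multitensor correspondence of \cite{WebMultMnd}, and then to verify the one property the lemma does not directly deliver: that $E'$ is a functor operad.

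First I would assemble the hypotheses of the lemma. Set $R:=E_1$, the unary part of $E$; distributivity of $E$ makes $R$ coproduct preserving, and $\lambda$-accessibility of $E$ makes $R$ $\lambda$-accessible. Set $S:=\Gamma E$; by theorem(42) of \cite{WebMultMnd} the distributivity of $E$ makes $S$ a distributive and path-like monad on $\ca GV$ over $\Set$, and it is $\lambda$-accessible because $E$ is. The inclusion $\widetilde{E_1}\hookrightarrow E$ of the sub-multitensor with the same unary part $E_1$ and all higher parts constant at $\emptyset$, transported by $\Gamma$ and composed with the identification $\Gamma\widetilde{E_1}\iso\ca GR$ used in the proof of Lemma~\ref{lem:mnd-lift-mult}, supplies a monad morphism $\phi:\ca GR\to S$ over $\Set$. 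Lemma~\ref{lem:mnd-lift-mult} then produces, for a suitable choice of $\phi_!$, an induced monad $T$ on $\ca G(V^R)$ that is over $\Set$, distributive, path-like and $\lambda$-accessible. By theorem(42) of \cite{WebMultMnd} such a $T$ is isomorphic to $\Gamma E'$ for a distributive multitensor $E'$ on $V^{E_1}=V^R$, unique to within isomorphism and $\lambda$-accessible; and since $\phi^*$ is, under $(\ca GV)^{\ca GR}\iso\ca G(V^R)$ and $(\ca GV)^S\iso\Enrich E$, the monadic forgetful functor $U$ of Section~\ref{sec:lifting-theorem}, one gets $\Enrich{E'}\iso\ca G(V^R)^{T}\catequiv\Enrich E$ over $\ca GV$, which is clause (2).

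The step I expect to be the main obstacle, and the only one genuinely new to the theorem, is showing that $E'$ is a functor operad, i.e. that its unary part $E'_1$ is the identity monad on $V^R$. Recovering $E'$ from $T=\Gamma E'$ as $\overline{T}$, this amounts to $\overline{T}_1=\id$, so I would evaluate $\overline{T}_1(Z)=T(Z)(0,1)$ on the $V^R$-graph $Z$ with object set $\{0,1\}$ and single non-trivial hom $Z$. Because the only path from $0$ to $1$ in this graph is the edge itself, no non-unary component of $E$ can contribute, and the transfinite construction of $\phi_!$ recalled in the proof of Lemma~\ref{lem:mnd-lift-mult} collapses at this hom to the canonical (split) coequalizer exhibiting the $R$-algebra $Z$ as the reflexive coequalizer of its free resolution $R^2Z\rightrightarrows RZ$. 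Thus $T(Z)(0,1)\iso Z$ in $V^R$, whence $E'_1=\id$ and $(E',\sigma')$ is a functor operad.

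Finally, for uniqueness I would argue at the level of monads. Any distributive functor operad $E''$ on $V^R$ with $\Enrich{E''}\iso\Enrich E$ over $\ca GV$ yields, via $\Gamma$, a distributive path-like monad $\Gamma E''$ on $\ca G(V^R)$ over $\Set$. Since the $E_1$-action on the homs of an $E$-category is intrinsic to the $E$-enrichment, this isomorphism is compatible with the underlying $V^R$-graphs, hence is in fact over $\ca G(V^R)$; the structure--semantics theorem of Lawvere then forces a unique monad isomorphism $\Gamma E''\iso T\iso\Gamma E'$, and the bijection of theorem(42) of \cite{WebMultMnd} between distributive multitensors on $V^R$ and distributive path-like monads over $\Set$ delivers $E''\iso E'$.
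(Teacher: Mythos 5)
Your proposal follows essentially the same route as the paper's proof: apply Lemma~\ref{lem:mnd-lift-mult} with $R=E_1$, $S=\Gamma E$ and $\phi=\Gamma\psi$, recognise the induced monad $T$ as $\Gamma E'$ for a distributive multitensor $E'$ via theorem(42) of \cite{WebMultMnd} (with theorem(29) giving $\lambda$-accessibility), and deduce uniqueness from Lawvere structure--semantics together with the monad--multitensor correspondence (proposition(43) of \cite{WebMultMnd}). The only real difference is that you explicitly verify that $E'_1$ is the identity by evaluating $T$ at a length-one sequence and observing that the transfinite construction collapses to the split canonical-presentation coequaliser of the $E_1$-algebra; this check is correct, and is exactly the verification the paper treats as ``true by construction'' in its proof and only spells out afterwards in section(\ref{sec:explicit-lifting}).
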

\begin{proof}
Write $\psi:\widetilde{E_1} \hookrightarrow E$ for the multitensor inclusion of the unary part of $E$, and then apply lemma(\ref{lem:mnd-lift-mult}) with $S=\Gamma{E}$, $R=E_1$ and $\phi=\Gamma{\psi}$ to produce a $\lambda$-accessible distributive and path-like monad $T$ on $\ca G(V^{E_1})$ over $\Set$. Thus by \cite{WebMultMnd} proposition(40) and theorem(42), $\overline{T}$ is a distributive multitensor on $V^{E_1}$ with $\Enrich {\overline{T}} \iso \Enrich E$. Moreover since $T \iso \Gamma {\overline{T}}$ it follows by \cite{WebMultMnd} theorem(29) that $\overline{T}$ is $\lambda$-accessible. As for uniqueness suppose that $(E',\sigma')$ is given as in the statement. Then by \cite{WebMultMnd} theorem(42) $\Gamma(E')$ is a distributive monad on $\ca G(V^{E_1})$ and one has
\[ \ca G(V^{E_1})^{\Gamma(E')} \iso \Enrich E \]
over $\ca G(V^{E_1})$. By structure-semantics one has an isomorphism $\Gamma(E'){\iso}T$ of monads, and thus by \cite{WebMultMnd} proposition(43), an isomorphism $E'{\iso}\overline{T}$ of multitensors.
\end{proof}
%

\section{Multitensor lifting made explicit}\label{sec:explicit-lifting}

Let us now instantiate the constructions of section(\ref{ssec:Dubuc}) to produce a more explicit description of the functor operad $E'$ produced by theorem(\ref{thm:lift-mult}). Beyond mere instantiation this task amounts to reformulating everything in terms of hom maps which live in $V$, because in our case the colimits being formed in $\ca GV$ at each stage of the construction are connected colimits diagrams whose morphisms are all identity on objects. Moreover these fixed object sets are of the form $\{0,...,n\}$ for $n \in \N$.
\\ \\
{\bf Notation}. We shall be manipulating sequences of data and so we describe here some notation that will be convenient. A sequence $(a_1,...,a_n)$ from some set will be denoted more tersely as $(a_i)$ leaving the length unmentioned. Similarly a sequence of sequences
\[ ((a_{11},...,a_{1n_1}),...,(a_{k1},...,a_{kn_k})) \]
of elements from some set will be denoted $(a_{ij})$ -- the variable $i$ ranges over $1{\leq}i{\leq}k$ and the variable $j$ ranges over $1{\leq}j{\leq}n_i$. Triply-nested sequences look like this $(a_{ijk})$, and so on. These conventions are more or less implicit already in the notation we have been using all along for multitensors. See especially section(\ref{ssec:LMC}) and \cite{EnHopI}. We denote by
\[ \con(a_{i_1,...,i_k}) \]
the ordinary sequence obtained from the $k$-tuply nested sequence $(a_{i_1,...,i_k})$ by concatenation. In particular given a sequence $(a_i)$, the set of $(a_{ij})$ such that $\con(a_{ij})=(a_i)$ is just the set of partitions of the original sequence into doubly-nested sequences, and will play an important role below. This is because to give the substitution maps for a multitensor $E$ on $V$, is to give maps
\[ \sigma : \opE\limits_i\opE\limits_j X_{ij} \to \opE\limits_i X_i \]
for all $(X_{ij})$ and $(X_i)$ from $V$ such that $\con(X_{ij})=(X_i)$.
\\ \\
\indent The monad map $\phi:M \to S$ is taken as $\Gamma(\psi):\ca GE_1 \to \Gamma(E)$ where $\psi:\widetilde{E_1} \hookrightarrow E$ is the inclusion of the unary part of the multitensor $E$. The role of $(X,x)$ in $V^M$ is played by sequences $(X_i,x_i)$ of $E_1$-algebras regarded as objects of $\ca G(V^{E_1})$. The transfinite induction produces for each ordinal $m$ and each sequence of $E_1$-algebras as above of length $n$, morphisms
\[ \begin{array}{l} {v^{(m)}_{(X_i,x_i)} : SQ_m(X_i,x_i) \rightarrow Q_{m{+}1}(X_i,x_i)} \\ {q^{(m)}_{(X_i,x_i)}:Q_m(X_i,x_i) \to Q_{m{+}1}(X_i,x_i)} \\ {q^{({<}m)}_{(X_i,x_i)}:S(X_i) \to Q_m(X_i,x_i)} \end{array} \]
in $\ca GV$ which are identities on objects, and thus we shall now evolve this notation so that it only records what's going on in the hom between $0$ and $n$. By the definition of $S$ we have the equation on the left
\[ \begin{array}{lccr} {S(X_i)(0,n) = \opE\limits_iX_i} &&& {Q_m(X_i,x_i)(0,n) = {\opEm\limits_i}(X_i,x_i)} \end{array} \]
and the equation on the right is a definition. Because of these definitions and that of $S$ we have the equation
\[ SQ_m(X_i,x_i)(0,n) = \coprod\limits_{\con(X_{ij},x_{ij}){=}(X_i,x_i)} \opE\limits_i\opEm\limits_j (X_{ij},x_{ij}). \]
The data for the hom maps of the $v^{(m)}$ thus consists of morphisms
\[ \begin{array}{c} {v^{(m)}_{(X_{ij},x_{ij})} : \opE\limits_i\opEm\limits_j (X_{ij},x_{ij}) \to \opEmpone\limits_i (X_i,x_i)} \end{array} \]
in $V$ whenever one has $\con(X_{ij},x_{ij})=(X_i,x_i)$ as sequences of $E_1$-algebras.

To summarise, the output of the transfinite process we are going to describe is, for each ordinal $m$, the following data. For each sequence $(X_i,x_i)$ of $E_1$-algebras, one has an object
\[ \opEm\limits_i (X_i,x_i) \]
and morphisms
\[ \begin{array}{l} {v^{(m)}_{(X_{ij},x_{ij})}} : {\opE\limits_i\opEm\limits_j (X_{ij},x_{ij}) \to \opEmpone\limits_i (X_i,x_i)} \\
{q^{(m)}_{(X_i,x_i)}} : {\opEm\limits_i (X_i,x_i) \to \opEmpone\limits_i (X_i,x_i)} \\
{q^{(<m)}_{(X_i,x_i)}} : {\opE\limits_iX_i \to \opEm\limits_i (X_i,x_i)} \end{array} \]
of $V$ where $\con(X_{ij},x_{ij})=(X_i,x_i)$.
\\ \\
{\bf Initial step}. First we put $\opEzero\limits_i (X_i,x_i) = \opE\limits_i X_i$, $q^{({<}0)}_{(X_i,x_i)_i} = \id$, and then form the coequaliser
\begin{equation}\label{eq:coeq}
\xygraph{!{0;(2,0):} {\opE\limits_iE_1X_i}="l" [r] {\opE\limits_iX_i}="m" [r] {\opEone\limits_i(X_i,x_i)}="r" "l":@<2ex>"m"^-{\sigma} "l":"m"_-{\opE\limits_ix_i}:@<1ex>"r"^-{q^{(0)}_{(X_i,x_i)}}} \end{equation}
in $V$ to define $q^{(0)}$. Put $v^{(0)}=q^{(0)}\sigma$ and $q^{({<}1)}=q^{(0)}$.
\\ \\
{\bf Inductive step}. Assuming that $v^{(m)}$, $q^{(m)}$ and $q^{({<}m{+}1)}$ are given, we have maps
\[ \begin{array}{lcr} {\xybox{\xygraph{!{0;(2,0):} {\opE\limits_i\opE\limits_j\opEm\limits_k}="l" [r] {\opE\limits_i\opEmpone\limits_{jk}}="r" "l":"r"^-{\opE\limits_iv^{(m)}}}}} && {\xybox{\xygraph{!{0;(2,0):} {\opE\limits_i\opE\limits_j\opEm\limits_k}="l" [r] {\opE\limits_{ij}\opEm\limits_k}="m" [r] {\opE\limits_{ij}\opEmpone\limits_k}="r" "l":"m"^-{{\sigma}\opEm\limits_k}:"r"^-{q^{(m)}}}}} \end{array} \]
and these are used to provide the parallel maps in the coequaliser
\[ \xygraph{!{0;(1.5,0):} {\coprod\limits_{\con(X_{ijk},x_{ijk})=(X_i,x_i)} \opE\limits_i\opE\limits_j\opEm\limits_k(X_{ijk},x_{ijk})}="l" [d] {\coprod\limits_{\con(X_{ij},x_{ij})=(X_i,x_i)} \opE\limits_i\opEmpone\limits_j(X_{ij},x_{ij})}="m" [d] {\opEmptwo\limits_i (X_i,x_i)}="r" "l":@<-2ex>"m" "l":@<2ex>"m":"r"^-{(v^{(m{+}1)}_{(X_{ij},x_{ij})})}} \]
which defines the $v^{(m{+}1)}$, the commutative diagram
\[ \xygraph{{\opEmpone\limits_i (X_i,x_i)}="l" [d] {E_1\opEmpone\limits_i (X_i,x_i)}="il" [r(4)] {\coprod\limits_{\con(X_{ij},x_{ij})=(X_i,x_i)} \opE\limits_i\opEmpone\limits_j(X_{ij},x_{ij})}="ir" [ru] {\opEmptwo\limits_i (X_i,x_i)}="r" "l":"il"_-{u}:"ir":"r"^(.35){v^{(m{+}1)}_{(X_i,x_i)}}:@{<-}"l"_-{q^{(m{+}1)}_{(X_i,x_i)}}} \]
in which the unlabelled map is the evident coproduct inclusion defines $q^{(m{+}1)}$, and $q^{({<}m{+}2)}=q^{(m{+}1)}q^{({<}m{+}1)}$.
\\ \\
{\bf Limit step}. Define $\opEm\limits_i (X_i,x_i)$ as the colimit of the sequence given by the objects $\opEr\limits_i (X_i,x_i)$ and morphisms $q^{(r)}$ for $r < m$, and $q_{<{m}}$ for the component of the universal cocone at $r=0$.
\[ \xygraph{!{0;(0,-1.5):(0,-3.45)::} {\colsum\limits_{\con(X_{ijk},x_{ijk})=(X_i,x_i)} \opE\limits_i\opE\limits_j\opEr\limits_k(X_{ijk},x_{ijk})}="tl" [r] {\colsum\limits_{\con(X_{ij},x_{ij})=(X_i,x_i)} \opE\limits_i\opEr\limits_j(X_{ij},x_{ij})}="tm" [r] {\colim_{r{<}m} \opEr\limits_i (X_i,x_i)}="tr" [d] {\opEm\limits_i (X_i,x_i)}="br" [l] {\coprod\limits_{\con(X_{ij},x_{ij})=(X_i,x_i)} \opE\limits_i\opEm\limits_j(X_{ij},x_{ij})}="bm" [l] {\coprod\limits_{\con(X_{ijk},x_{ijk})=(X_i,x_i)} \opE\limits_i\opE\limits_j\opEm\limits_k(X_{ijk},x_{ijk})}="bl" "tl":@<1ex>"tm"^-{\sigma^{(<{m})}}:@<1ex>"tr"^-{v^{(<{m})}} "tl":@<-1ex>"tm"_-{(Ev)^{(<{m})}}:@<-1ex>@{<-}"tr"_-{u^{(<{m})}} "bl":"bm"_-{\mu}:@{<-}"br"_-{uc} "tl":"bl"^{o_{m,2}} "tm":"bm"^{o_{m,1}} "tr":@{=}"br"} \]
As before we write $o_{m,1}$ and $o_{m,2}$ for the obstruction maps, and $c$ denotes the evident coproduct injection. The maps $\sigma^{(<{m})}$, $(Ev)^{(<{m})}$, $v^{(<{m})}$ and $u^{(<{m})}$ are by definition induced by $\sigma{\opEr}$, $(Ev)^{(r)}$, $v^{(r)}$ and $u{\opEr}$ for $r < m$ respectively. Define $v^{(m)}$ as the coequaliser of $o_{m,1}\sigma^{(<{m})}$ and $o_{m,1}(Ev)^{<{m}}$, $q^{(m)}=v^{(m)}(u{\opEm})$ and $q^{(<{m{+}1})}=q^{(m)}q^{(<{m})}$.
\\ \\
Instantiating corollary(\ref{cor:explicit-phi-shreik}) to the present situation gives
\begin{corollary}\label{cor:lifted-obj}
Let $V$ be a cocomplete category, $\lambda$ a regular cardinal, and $E$ a distributive $\lambda$-accessible multitensor on $V$. Then for any ordinal $m$ with $|m| \geq \lambda$ one may take
\[ (\opEm\limits_i (X_i,x_i), a(X_i,x_i)) \]
where the action $a(X_i,x_i)$ is given as the composite
\[ \xygraph{!{0;(3,0):} {E_1\opEm\limits_i (X_i,x_i)}="l" [r] {\opEmpone\limits_i (X_i,x_i)}="m" [r] {\opEm\limits_i (X_i,x_i)}="r" "l":"m"^-{v^{(m)}}:"r"^-{(q^{(m)})^{-1}}} \]
as an explicit description of the object map of the lifted multitensor $E'$ on $V^{E_1}$.
\end{corollary}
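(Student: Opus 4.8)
The plan is to obtain the statement as a hom-by-hom instantiation of the general form of Dubuc's construction recorded in corollary(\ref{cor:explicit-phi-shreik}), there being essentially nothing to build beyond the transfinite process already written out above. To begin I would recall from the proof of theorem(\ref{thm:lift-mult}) that $E' = \Tbar$, where $T = \phi^{*}\phi_{!}$ is the monad on $\ca G(V^{E_1})$ induced by $\phi_{!} \ladj \phi^{*}$ for $\phi = \Gamma(\psi):\ca G E_1 \to \Gamma E = S$. By the definition of $\Tbar$, the object $E'_n(X_i,x_i)$ to be exhibited is the hom between $0$ and $n$ of the $V^{E_1}$-graph $T(X_i,x_i)$, taken with its $E_1$-algebra structure. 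Since $\ca G(U^{E_1})T = U^{S}\phi_{!}$, this is nothing but the hom $(0,n)$ of the $S$-algebra $\phi_{!}(X_i,x_i)$, whose $E_1$-action on homs is the restriction of its $S$-action along $\phi$.

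With this reduction in place I would first settle convergence. As $E$ is $\lambda$-accessible, \cite{WebMultMnd} theorem(29) gives that $S = \Gamma E$ is $\lambda$-accessible, so corollary(\ref{cor:explicit-phi-shreik}) applies and tells us that for any $m$ with $|m| \geq \lambda$ the transfinite sequence has stabilised: the map $q^{(m)}$ is invertible, $Q_m(X_i,x_i)$ computes $\phi_{!}(X_i,x_i)$, and its $S$-action is $(q^{(m)})^{-1}v^{(m)}$. Because every stage of the construction is over $\Set$ by lemma(\ref{lem:mnd-lift-mult})(1), remark(21) of \cite{WebMultMnd} lets us read all of this off one hom at a time; restricting to the hom between $0$ and $n$ identifies the underlying $V$-object of $E'_n(X_i,x_i)$ with $\opEm\limits_i(X_i,x_i) = Q_m(X_i,x_i)(0,n)$, exactly as the statement asserts.

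The one step I expect to require genuine care is the identification of the $E_1$-action with the displayed composite, since this is where the unary monad morphism $\phi$ must be tracked. The action is the $S$-action precomposed with the hom-$(0,n)$ component of $\phi$. Using $S = \Gamma E$ one has
\[ SQ_m(X_i,x_i)(0,n) = \coprod_{\con(X_{ij},x_{ij})=(X_i,x_i)} \opE\limits_i\opEm\limits_j(X_{ij},x_{ij}), \]
and because $\psi$ is the inclusion of the unary part, $\phi = \Gamma(\psi)$ lands at this hom in the single summand indexed by the trivial one-block partition, namely $E_1\opEm\limits_i(X_i,x_i)$, as the coproduct injection. The component of $v^{(m)}$ on that summand is precisely the map $v^{(m)}:E_1\opEm\limits_i(X_i,x_i)\to\opEmpone\limits_i(X_i,x_i)$ named in the statement, so the restricted action is $(q^{(m)})^{-1}v^{(m)}$ and $a(X_i,x_i)$ takes the claimed form. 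What remains is purely the bookkeeping of matching the length-one chain in the definition of $\Gamma$ on the sequence-graph $(X_i)$ with this one-block summand, which is routine.
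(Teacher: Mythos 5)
Your proposal is correct and is essentially the paper's own proof: the corollary is obtained there precisely by instantiating corollary(\ref{cor:explicit-phi-shreik}) at $S=\Gamma E$, $M=\ca G E_1$, $\phi=\Gamma\psi$ and reading everything off at the hom between $0$ and $n$, exactly as you do. Your careful tracking of the $E_1$-action as the restriction of the $S$-action along $\phi$, landing in the one-block summand of $\Gamma E(Q_m)(X_i,x_i)(0,n)$, just spells out what the paper's notational setup in section(\ref{sec:explicit-lifting}) (where $v^{(m)}$, $q^{(m)}$ are already recorded hom-wise) leaves implicit.
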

In corollaries (\ref{cor:phi-shreik-simple}) and (\ref{cor:vexp-simple}), in which the initial data is a monad map $\phi:M \to S$ between monads on a category $V$ together with an algebra $(X,x)$ for $M$, we noted the simplification of our constructions when $S$ and $S^2$ preserve the coequaliser
\begin{equation}\label{eq:monad-coeq} \xygraph{!{0;(2,0):} {SMX}="l" [r] {SX}="m" [r] {Q_1X}="r" "l":@<-1ex>"m"_-{Sx} "l":@<1ex>"m"^-{\mu^SS(\phi)}:"r"^-{q_0}} \end{equation}
in $V$, which is part of the first step of the inductive construction of $\phi_!$. In the present situation the role of $V$ is played by the category $\ca GV$, the role of $S$ is played by $\Gamma E$, and the role of $(X,x)$ played by a given sequence $(X_i,x_i)$ of $E_1$-algebras, and so the role of the coequaliser (\ref{eq:monad-coeq}) is now played by the coequaliser
\begin{equation}\label{eq:monad-coeq2} \xygraph{!{0;(2,0):} {\Gamma E(E_1X_i)}="l" [r(1.5)] {\Gamma E(X_i)}="m" [r] {Q_1}="r" "l":@<-1ex>"m"_-{Sx} "l":@<1ex>"m"^-{\mu^SS(\phi)}:"r"^-{q^{(0)}}} \end{equation}
in $\ca GV$. Here we have denoted by $Q_1$ the $V$-graph with objects $\{0,...,n\}$ and homs given by
\[ Q_1(i,j) = \left\{\begin{array}{lll} {\emptyset} && {\textnormal{if $i>j$}} \\ {\opEone\limits_{i{<}k{\leq}j}(X_k,x_k)} && {\textnormal{if $i \leq j$.}} \end{array}\right. \]
Taking the hom of (\ref{eq:monad-coeq2}) between $0$ and $n$ gives the coequaliser
\begin{equation}\label{eq:mult-coeq} \xygraph{!{0;(2,0):} {\opE\limits_iE_1X_i}="l" [r] {\opE\limits_iX_i}="m" [r] {\opEone\limits_i(X_i,x_i)}="r" "l":"m"_-{\opE\limits_ix_i} "l":@<2ex>"m"^-{\sigma}:@<1ex>"r"^-{q^{(0)}}} \end{equation}
in $V$ which is part of the first step of the explicit inductive construction of $E'$. We shall refer to (\ref{eq:mult-coeq}) as the \emph{basic coequaliser associated to the sequence $(X_i,x_i)$} of $E_1$-algebras. Note that all coequalisers under discussion here are reflexive coequalisers, with the common section for the basic coequalisers given by the maps $\opE\limits_iu_{X_i}$.

The basic result which expresses why reflexive coequalisers are nice, is the $3{\times}3$-lemma, which we record here for the reader's convenience. A proof can be found in \cite{PTJ-topos77}.
\begin{lemma}\label{lem:3by3}
{\bf $3{\times}3$-lemma}. Given a diagram
\[ \xymatrix @R=3em @C=3em {A \ar@<1ex>[r]^-{f_1} \ar@<-1ex>[r]_{g_1} \ar@<1ex>[d]^{b_1} \ar@<-1ex>[d]_{a_1}
& B \ar[r]^-{h_1} \ar@<1ex>[d]^{b_2} \ar@<-1ex>[d]_{a_2} & C \ar@<1ex>[d]^{b_3} \ar@<-1ex>[d]_{a_3} \\
D \ar@<1ex>[r]^-{f_2} \ar@<-1ex>[r]_-{g_2} & E \ar[r]^-{h_2} & F \ar[d]^{c} \\ && H} \]
in a category such that: (1) the two top rows and the right-most column are coequalisers, (2) $a_1$ and $b_1$ have a common section, (3) $f_1$ and $g_1$ have a common section, (3) $f_2a_1{=}a_2f_1$, (4) $g_2b_1{=}b_2g_1$, (5) $h_2a_2{=}a_3h_1$ and (6) $h_2b_2{=}b_3h_1$; then $ch_2$ is a coequaliser of $f_2a_1{=}a_2f_1$ and $g_2b_1{=}b_2g_1$.
\end{lemma}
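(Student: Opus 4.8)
The plan is a direct diagram chase that exploits the two coequaliser hypotheses together with the reflexivity assumptions. Throughout I would write $s:B\to A$ for the common section of $f_1$ and $g_1$ and $t:D\to A$ for the common section of $a_1$ and $b_1$, so that $f_1s=g_1s=\id$ and $a_1t=b_1t=\id$. The target is to verify that $ch_2\colon E\to H$ is the coequaliser of the pair $(f_2a_1,g_2b_1)\colon A\to E$, these being legitimately equal to $(a_2f_1,b_2g_1)$ by the intertwining hypotheses.

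First I would check that $ch_2$ coequalises the pair. Using $f_2a_1=a_2f_1$ and $h_2a_2=a_3h_1$ one rewrites $ch_2f_2a_1=ca_3h_1f_1$, and symmetrically $ch_2g_2b_1=cb_3h_1g_1$. The top row gives $h_1f_1=h_1g_1$ and the right-hand column gives $ca_3=cb_3$, so the two expressions coincide; this establishes that $ch_2$ coequalises the pair.

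The heart of the argument is universality, and this is where reflexivity does the real work. Given a test map $k\colon E\to X$ with $kf_2a_1=kg_2b_1$, I would first precompose with $t$: since $a_1t=b_1t=\id$ this yields $kf_2=kf_2a_1t=kg_2b_1t=kg_2$, so $k$ coequalises $f_2,g_2$ and, the middle row being a coequaliser, factors uniquely as $k=k'h_2$. Next I would show that $k'$ coequalises $a_3,b_3$. Composing with the epimorphism $h_1$ and using the intertwining relations gives $k'a_3h_1=k'h_2a_2=ka_2$ and $k'b_3h_1=kb_2$; meanwhile the hypothesis on $k$, rewritten as $ka_2f_1=kb_2g_1$, may be precomposed with the section $s$ to give $ka_2=kb_2$ using $f_1s=g_1s=\id$. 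Since $h_1$ is epi this forces $k'a_3=k'b_3$, so $k'$ factors uniquely through the right-hand coequaliser as $k'=\ell c$, whence $\ell ch_2=k$. Uniqueness of $\ell$ is then automatic, since $c$ and $h_2$ are coequalisers, hence epimorphisms, so that $ch_2$ is epi.

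The only genuinely delicate point, and the step I expect to be the main obstacle to get exactly right, is the twofold use of the common sections to promote the pre-composed equalities $kf_2a_1=kg_2b_1$ and $ka_2f_1=kb_2g_1$ to the stronger cancelled forms $kf_2=kg_2$ and $ka_2=kb_2$. This cancellation is precisely what would fail for a non-reflexive pair, and it is what makes hypotheses (2) and (3) indispensable; everything else is bookkeeping among the intertwining identities and the epimorphism property of coequalisers.
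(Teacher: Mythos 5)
Your proof is correct, and there is a point of comparison worth making explicit: the paper does not actually prove this lemma at all --- it states it and refers to Johnstone's \emph{Topos theory} \cite{PTJ-topos77} for a proof --- so your argument supplies what the paper only cites. Your direct verification of the universal property is sound in every step. The coequalising identity $ch_2f_2a_1 = ch_2g_2b_1$ follows, as you say, from the intertwining relations together with $h_1f_1 = h_1g_1$ and $ca_3 = cb_3$. For universality, given $k$ with $kf_2a_1 = kg_2b_1$, precomposing with the common section $t$ of $a_1, b_1$ gives $kf_2 = kg_2$, hence a factorisation $k = k'h_2$ through the second row; rewriting the hypothesis as $ka_2f_1 = kb_2g_1$ and precomposing with the common section $s$ of $f_1, g_1$ gives $ka_2 = kb_2$, which combined with $k'a_3h_1 = ka_2$, $k'b_3h_1 = kb_2$ and the epimorphy of $h_1$ yields $k'a_3 = k'b_3$, hence the factorisation $k' = \ell c$; uniqueness of $\ell$ follows since $ch_2$ is a composite of regular epimorphisms. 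You have also correctly diagnosed where reflexivity enters: hypotheses (2) and (3) are used exactly once each, to perform the two cancellations, and without them the statement fails. This is the standard proof of the reflexive $3{\times}3$ lemma, and it is exactly the kind of argument the cited reference contains; nothing is missing.
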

\noindent If $F:{\ca A_1}{\times}...{\times}{\ca A_n}{\rightarrow}{\ca B}$ is a functor which preserves connected colimits of a certain type, then it also preserves these colimits in each variable separately, because for a connected colimit, a cocone involving only identity arrows is a universal cocone. The most basic corollary of the $3{\times}3$-lemma says that the converse of this is true for reflexive coequalisers.
\begin{corollary}\label{cor:3by3}
Let $F:{\ca A_1}{\times}...{\times}{\ca A_n}{\rightarrow}{\ca B}$ be a functor. If $F$ preserves reflexive coequalisers in each variable separately then $F$ preserves reflexive coequalisers.
\end{corollary}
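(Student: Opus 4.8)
The plan is to prove the case $n=2$ directly from the $3{\times}3$-lemma and then reduce the general case to it by induction. Throughout I use that colimits in a product category are computed componentwise, so that a reflexive coequaliser in $\ca A_1 {\times} \dots {\times} \ca A_n$ is precisely an $n$-tuple of reflexive coequalisers, one in each factor; in particular its parallel pair and its common section are tuples of the corresponding data in the separate $\ca A_i$.

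For $n=2$, let $A_i \rightrightarrows B_i \to C_i$ be reflexive coequalisers in $\ca A_i$ for $i=1,2$, with parallel pairs $(d_i,e_i)$, coequalising maps $c_i$, and common sections $s_i$. I would instantiate lemma(\ref{lem:3by3}) with the two top rows $F(A_1,A_2)\rightrightarrows F(B_1,A_2)\to F(C_1,A_2)$ and $F(A_1,B_2)\rightrightarrows F(B_1,B_2)\to F(C_1,B_2)$, whose horizontal maps are the images under $F({-},A_2)$ and $F({-},B_2)$ of $d_1,e_1,c_1$, and with rightmost column $F(C_1,A_2)\rightrightarrows F(C_1,B_2)\to F(C_1,C_2)$, whose maps are the images under $F(C_1,{-})$ of $d_2,e_2,c_2$; the remaining vertical maps are $F(B_1,d_2),F(B_1,e_2)$ and $F(A_1,d_2),F(A_1,e_2)$. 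The two top rows are coequalisers because $F({-},A_2)$ and $F({-},B_2)$ preserve the first reflexive coequaliser, and the rightmost column is a coequaliser because $F(C_1,{-})$ preserves the second. The common-section hypotheses are met by $F(s_1,A_2)$ and $F(A_1,s_2)$, while the four commutativity conditions all collapse, by functoriality of $F$ on the product, to identities of the form $F(d_1,B_2)F(A_1,d_2)=F(d_1,d_2)=F(B_1,d_2)F(d_1,A_2)$ (and likewise with $e$ and $c$). The lemma then gives that $F(c_1,c_2)$ is the coequaliser of $F(d_1,d_2),F(e_1,e_2):F(A_1,A_2)\rightrightarrows F(B_1,B_2)$, which is exactly the image under $F$ of the given reflexive coequaliser in $\ca A_1{\times}\ca A_2$.

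For the general case I would induct on $n$, the case $n=1$ being trivial. Regarding $F$ as a functor on $(\ca A_1{\times}\dots{\times}\ca A_{n-1}){\times}\ca A_n$, I apply the case $n=2$: preservation of reflexive coequalisers in the last variable (the other $n-1$ held fixed) is immediate from the hypothesis, and preservation in the grouped variable with $\ca A_n$ held at any object $a$ follows by applying the induction hypothesis to $F({-},a):\ca A_1{\times}\dots{\times}\ca A_{n-1}\to\ca B$, which preserves reflexive coequalisers in each of its $n-1$ variables separately.

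The main obstacle is purely organisational: one must line up the seven objects and the parallel arrows so that they match the hypotheses of lemma(\ref{lem:3by3}) exactly, and verify each of its commutativity and common-section conditions. These checks are routine consequences of the interchange identities in the product category, but they must be carried out with care so that the conclusion identifies the diagonal coequaliser with $F(c_1,c_2)$, the image of the product coequalising map, rather than with some unrelated composite.
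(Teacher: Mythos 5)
Your proposal is correct and takes essentially the same approach as the paper, which merely remarks that the corollary ``can be proved by induction on $n$ using the $3{\times}3$-lemma in much the same way as \cite{LkMonFinMon} lemma(1)''; your explicit instantiation of lemma(\ref{lem:3by3}) for $n=2$ and the induction by grouping the first $n-1$ variables are exactly the details the paper leaves unwritten. The verifications you flag (the interchange identities and the identification of the diagonal coequaliser with $F(c_1,c_2)$) are indeed the routine content of that induction, and you have set them up correctly.
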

\noindent and this can be proved by induction on $n$ using the $3{\times}3$-lemma in much the same way as \cite{LkMonFinMon} lemma(1). The most well-known instance of this is
\begin{corollary}\label{cor:3by3-2}\cite{LkMonFinMon}
Let $\ca V$ be a biclosed monoidal category. Then the $n$-fold tensor product of reflexive coequalisers in $\ca V$ is again a reflexive coequaliser.
\end{corollary}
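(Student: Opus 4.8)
The plan is to deduce this directly from corollary(\ref{cor:3by3}), so that the only genuine work is to verify the single-variable hypothesis of that corollary for the $n$-fold tensor product. First I would regard the $n$-fold tensor product as a functor $T : \ca V^n \to \ca V$ given on objects by $(X_1,\dots,X_n) \mapsto X_1 \otimes \cdots \otimes X_n$. By the coherence theorem for monoidal categories the choice of bracketing is immaterial up to coherent natural isomorphism, so any fixed choice of association may be used, and the statement to be proved is precisely that $T$ preserves reflexive coequalisers in the sense of the preceding corollaries.

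Next I would exploit biclosedness. For every object $X$ of $\ca V$ the functors $X \otimes (-)$ and $(-) \otimes X$ admit right adjoints, namely the internal homs, and hence preserve all colimits; in particular they preserve reflexive coequalisers. Fixing all but the $i$-th argument of $T$ yields a functor $\ca V \to \ca V$ of the form $Y \mapsto A \otimes Y \otimes B$, where $A = X_1 \otimes \cdots \otimes X_{i-1}$ and $B = X_{i+1} \otimes \cdots \otimes X_n$ are assembled from the remaining arguments (with $A$ or $B$ taken to be the unit in the boundary cases $i=1$ or $i=n$). This is the composite of the right-tensoring functor $(-) \otimes B$ followed by the left-tensoring functor $A \otimes (-)$, each of which preserves reflexive coequalisers, and a composite of such functors does too. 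Hence $T$ preserves reflexive coequalisers in each variable separately.

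Finally I would invoke corollary(\ref{cor:3by3}) verbatim for $F = T$: since $T$ preserves reflexive coequalisers in each variable separately, it preserves them jointly in all $n$ variables. Unpacking the meaning of joint preservation is exactly the assertion that the $n$-fold tensor product of reflexive coequalisers is again a reflexive coequaliser, which completes the argument.

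I do not expect any substantive obstacle at this level: the real content — the inductive use of the $3{\times}3$-lemma that upgrades separate-variable preservation to joint preservation of reflexive coequalisers — has already been discharged inside corollary(\ref{cor:3by3}). The present statement is then essentially a clean instance of that corollary, the only genuinely new ingredient being the observation that biclosedness supplies, for free, the one-variable preservation via the internal-hom adjunctions.
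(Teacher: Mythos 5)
Your proposal is correct and is precisely the paper's intended argument: the paper offers no separate proof, presenting this corollary as ``the most well-known instance'' of corollary(\ref{cor:3by3}), with biclosedness supplying the separate-variable preservation of reflexive coequalisers via the internal-hom adjunctions, exactly as you spell out.
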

\noindent In particular note that by corollary(\ref{cor:3by3}) a multitensor $E$ preserves (some class of) reflexive coequalisers iff it preserves them in each variable separately.

Returning to our basic coequalisers an immediate consequence of the explicit description of $\Gamma E$ and corollary(\ref{cor:3by3}) is
\begin{lemma}\label{lem:reformulate-simplifying-conditions}
Let $E$ be a distributive multitensor on $V$ a cocomplete category, and $(X_i,x_i)$ a sequence of $E_1$-algebras. If $E$ preserves the basic coequalisers associated to all the subsequences of $(X_i,x_i)$, then for all $r \in \N$, $(\Gamma E)^r$ preserves the coequaliser (\ref{eq:monad-coeq2}).
\end{lemma}
\noindent and applying this lemma and corollary(\ref{cor:phi-shreik-simple}) gives
\begin{corollary}\label{cor:lifted-obj-simple}
Let $V$ be a cocomplete category, $\lambda$ a regular cardinal, $E$ a distributive $\lambda$-accessible multitensor on $V$ and $(X_i,x_i)$ a sequence of $E_1$-algebras. If $E$ preserves the basic coequalisers associated to all the subsequences of $(X_i,x_i)$, then one may take
\[ \opEpr\limits_i(X_i,x_i) = (\opEone\limits_i (X_i,x_i), a) \]
where the action $a$ is defined as the unique map such that $aE_1(q^{(0)})=q^{(0)}\sigma$.
\end{corollary}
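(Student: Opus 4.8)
The plan is to obtain the corollary as a direct application of the simplified $\phi_!$-construction, corollary(\ref{cor:phi-shreik-simple}), under the dictionary set up in the discussion preceding the statement. Recall that here the role of the base category is played by $\ca GV$, that of the monad $S$ by $\Gamma E$, and that of the $M$-algebra $(X,x)$ by the sequence $(X_i,x_i)$ of $E_1$-algebras regarded as an object of $\ca G(V^{E_1})$; under this translation the coequaliser(\ref{eq:monad-coeq}) becomes(\ref{eq:monad-coeq2}), whose hom between $0$ and $n$ is exactly the basic coequaliser(\ref{eq:mult-coeq}) defining $\opEone\limits_i(X_i,x_i)$.

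First I would check that the hypotheses of corollary(\ref{cor:phi-shreik-simple}) hold, namely that $S$ and $S^2$ preserve(\ref{eq:monad-coeq2}). This is precisely the content of lemma(\ref{lem:reformulate-simplifying-conditions}): since $E$ preserves the basic coequalisers associated to all subsequences of $(X_i,x_i)$, one obtains that $(\Gamma E)^r$ preserves(\ref{eq:monad-coeq2}) for every $r \in \N$, in particular for $r=1$ and $r=2$. Corollary(\ref{cor:phi-shreik-simple}) then collapses the transfinite construction of $\phi_!(X_i,x_i)$ to its first stage: its value is the $V$-graph $Q_1$, with algebra structure read off directly from the first coequaliser rather than from the colimit over $|m| \geq \lambda$ used in corollary(\ref{cor:lifted-obj}).

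Next I would extract the data at the hom between $0$ and $n$. All the $V$-graphs and maps occurring are identities on objects with object set $\{0,\dots,n\}$, so taking this hom sends $Q_1$ to $\opEone\limits_i(X_i,x_i)$ and(\ref{eq:monad-coeq2}) to(\ref{eq:mult-coeq}); hence the object part of the lifted functor operad is $\opEpr\limits_i(X_i,x_i) = \opEone\limits_i(X_i,x_i)$. For the action, I would observe that by corollary(\ref{cor:3by3}) the hypothesis forces the unary part $E_1$ to preserve the basic coequaliser in its single variable, so that $E_1(q^{(0)})$ is again a coequaliser. The $E_1$-algebra structure carried by this hom of $\phi_!(X_i,x_i)$ is then the unique map $a$ obtained by factoring $v^{(0)} = q^{(0)}\sigma$ through $E_1(q^{(0)})$, that is, the unique $a$ with $aE_1(q^{(0)}) = q^{(0)}\sigma$, as claimed.

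The only genuine obstacle is the bookkeeping required to confirm that the $S$-algebra action produced abstractly by corollary(\ref{cor:phi-shreik-simple}) descends, at the hom between $0$ and $n$, to the map $a$ above. Concretely this comes down to checking that $q^{(0)}\sigma$ coequalises the image under $E_1$ of the parallel pair appearing in(\ref{eq:mult-coeq}); a short chase using the associativity and naturality of the substitution reduces the two resulting composites to ones that agree by the defining property of the coequaliser $q^{(0)}$. One should finally confirm that the resulting $a$ agrees with the composite $(q^{(m)})^{-1}v^{(m)}$ of corollary(\ref{cor:lifted-obj}) under the stabilisation isomorphism $\opEone\limits_i(X_i,x_i) \iso \opEm\limits_i(X_i,x_i)$, so that the simplified and general descriptions of $E'$ coincide.
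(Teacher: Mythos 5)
Your proposal is correct and follows essentially the same route as the paper: lemma(\ref{lem:reformulate-simplifying-conditions}) supplies the hypotheses of corollary(\ref{cor:phi-shreik-simple}) with $S=\Gamma E$, the transfinite construction then stabilises at stage $1$, and restricting to the hom between $0$ and $n$ yields exactly the stated object $\opEone\limits_i(X_i,x_i)$ and the action $a$ determined by $aE_1(q^{(0)})=q^{(0)}\sigma$ (the unary component of the $S$-algebra structure $w$), details the paper leaves implicit. One small quibble: you do not need corollary(\ref{cor:3by3}) to see that $E_1$ preserves the basic coequaliser, since this is already contained in the hypothesis that $E$ (hence each $E_n$, in particular the unary part) preserves the basic coequalisers of all subsequences.
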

\noindent Note in particular that when the sequence $(X_i,x_i)$ of $E_1$-algebras is of length $n=0$ or $n=1$, the associated basic coequaliser is absolute. In the $n=0$ case the basic coequaliser is constant at $E_0$, and when $n=1$ the basic coequaliser may be taken to be the canonical presentation of the given $E_1$-algebra. Thus in these cases it follows from corollary(\ref{cor:lifted-obj-simple}) that $E'_0=(E_0,\sigma)$ and $E_1'(X,x)=(X,x)$. Reformulating the explicit description of the unit in corollary(\ref{cor:vexp-simple}) one recovers the fact from our explicit descriptions, that the unit of $E'$ is the identity, which was of course true by construction.

To complete the task of giving a completely explicit description of the multitensor $E'$ we now turn to unpacking its substitution. So we assume that $E$ is a distributive $\lambda$-accessible multitensor on $V$ a cocomplete category, and fix an ordinal $m$ so that $|m| \geq \lambda$, so that $E'$ may be constructed as $E^{(m)}$ as in corollary(\ref{cor:lifted-obj}). By transfinite induction on $r$ we shall generate the following data:
\[ \sigma^{(r)}_{X_{ij},x_{ij}} : \opEr\limits_i(\opEm\limits_j(X_{ij}),x_{ij}) \to \opEm\limits_i(X_i,x_i) \]
and $\sigma^{(r{+}1)}_{X_{ij},x_{ij}}$ whenever $\con(X_{ij},x_{ij})=(X_i,x_i)$, such that
\[ \xygraph{!{0;(2.5,0):(0,.5)::} {\opE\limits_i\opEr\limits_j\opEm\limits_k}="tl" [r] {\opErpone\limits_{ij}\opEm\limits_k}="tr" [d] {\opEm\limits_{ijk}}="br" [l] {\opE\limits_i\opEm\limits_{jk}}="bl" "tl":"tr"^-{v^{(r)}E^{(m)}}:"br"^-{\sigma^{(r{+}1)}}:@{<-}"bl"^-{(q^{(m)})^{-1}v^{(m)}}:@{<-}"tl"^-{\opE\limits_i\sigma^{(r)}}} \]
commutes.
\\ \\
{\bf Initial step}. Define $\sigma^{(0)}$ to be the identity and $\sigma^{(1)}$ as the unique map such that $\sigma^{(1)}q^{(0)}=(q^{(m)})^{-1}v^{(m)}$ by the universal property of the coequaliser $q^{(0)}$.
\\ \\
{\bf Inductive step}. Define $\sigma^{(r{+}2)}$ as the unique map such that
\[ \sigma^{(r{+}2)}(v^{(r{+}1)}E^{(m)})=(q^{(m)})^{-1}v^{(m)}(\opE\limits_i\sigma^{(r{+}1)}) \]
using the universal property of $v^{(r{+}1)}$ as a coequaliser.
\\ \\
{\bf Limit step}. When $r$ is a limit ordinal define $\sigma^{(r)}$ as induced by the $\mu^{(s)}$ for $s<r$ and the universal property of $E^{(r)}$ as the colimit of the sequence of the $E^{(s)}$ for $s<r$. Then define $\sigma^{(r{+}1)}$ as the unique map such that
\[ \sigma^{(r{+}1)}(v^{(r)}E^{(m)})=(q^{(m)})^{-1}v^{(m)}(\opE\limits_i\sigma^{(r)}) \]
using the universal property of $v^{(r)}$ as a coequaliser.
\\ \\
The fact that the transfinite construction just specified was obtained from that for corollary(\ref{cor:induced-monad-very-explicit}), by taking $S=\Gamma E$ and looking at the homs, means that by corollaries (\ref{cor:induced-monad-very-explicit}) and (\ref{cor:vexp-simple}) one has
\begin{corollary}\label{cor:induced-substitution-very-explicit}
Let $V$ be a cocomplete category, $\lambda$ a regular cardinal, $E$ a distributive $\lambda$-accessible multitensor on $V$ and $(X_i,x_i)$ a sequence of $E_1$-algebras. Then one has
\[ \sigma'_{(X_i,x_i)} = \sigma^{(m)}_{(X_i,x_i)} \]
as an explicit description of the substitution of $E'$. If moreover $E$ preserves the basic coequalisers of all the subsequences of $(X_i,x_i)$, then one may take $\sigma^{(1)}_{(X_i,x_i)}$ as the explicit description of the substitution.
\end{corollary}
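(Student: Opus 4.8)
The plan is to identify the substitution $\sigma'$ of the lifted functor operad $E'=\overline{T}$ with the multiplication $\mu^T$ of the induced monad $T$, read off between the objects $0$ and $n$, and then to recognise the displayed transfinite construction of the $\sigma^{(r)}$ as exactly this restriction of the explicit formula for $\mu^T$. By the construction $T\mapsto\overline{T}$ of \cite{WebMultMnd} theorem(42) underlying the isomorphism $\Gamma\overline{T}\iso T$ used in the proof of theorem(\ref{thm:lift-mult}), the substitution map of the multitensor $\overline{T}$ on a sequence $(X_i,x_i)$ is precisely the hom-component of $\mu^T$ between $0$ and $n$ on the associated sequence $V$-graph. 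So the problem reduces to computing $\mu^T$ and restricting to homs.

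First I would invoke the general explicit description of the multiplication of a monad arising from $\phi_!\ladj\phi^*$ --- corollary(\ref{cor:induced-monad-very-explicit}) of the appendix --- applied to $\phi=\Gamma\psi:\ca GE_1\to\Gamma E$. This presents $\mu^T$ as a transfinite tower of stages, each stage $E^{(r+1)}$ and its structure maps $v^{(r)},q^{(r)}$ defined by a universal property of either a reflexive coequaliser (successor case) or the limit-stage colimit. The essential structural point is that all of these colimits are \emph{connected} and, by lemma(\ref{lem:concol-pathlike}) together with remark(21) of \cite{WebMultMnd}, live within a single fibre of $(-)_0:\ca G(V^{E_1})\to\Set$; hence they are computed one hom at a time. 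Restricting the entire monad-level construction to the hom between $0$ and $n$ therefore yields verbatim the $V$-level transfinite construction of the $\sigma^{(r)}$ displayed above, with the stages, maps, and defining (co)limits matching hom-wise those already recalled in corollary(\ref{cor:lifted-obj}).

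The main verification --- and the step demanding care --- is checking that the universal properties used to define each $\sigma^{(r+1)}$ (the coequaliser property of $v^{(r)}$ in the successor case, the colimit property of $E^{(r)}$ in the limit case) are precisely the hom-restrictions of the universal properties defining the corresponding stage of $\mu^T$ in corollary(\ref{cor:induced-monad-very-explicit}). This is bookkeeping rather than substance: one matches the commuting square displayed just before the initial step against the hom-between-$0$-and-$n$ component of the monad-level data, and propagates the identification through the transfinite induction, using throughout that the reindexing of a coproduct of coproducts by concatenation $\con$ agrees on the two sides. Once this matching is in place, stabilisation of the $\phi_!$ construction at any ordinal $m$ with $|m|\geq\lambda$ (guaranteed by $\lambda$-accessibility, exactly as in corollary(\ref{cor:lifted-obj})) gives $\sigma'_{(X_i,x_i)}=\sigma^{(m)}_{(X_i,x_i)}$.

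For the final clause I would appeal to the simplification recorded in corollary(\ref{cor:vexp-simple}): when both $\Gamma E$ and $(\Gamma E)^2$ preserve the relevant reflexive coequaliser, the construction of $\phi_!$ collapses to its first stage. By lemma(\ref{lem:reformulate-simplifying-conditions}) this preservation is exactly the hypothesis that $E$ preserve the basic coequalisers of all subsequences of $(X_i,x_i)$, and the coequaliser (\ref{eq:monad-coeq2}) whose preservation is needed is, hom-wise, the basic coequaliser (\ref{eq:mult-coeq}), as was noted just before lemma(\ref{lem:reformulate-simplifying-conditions}). Reading the collapsed construction between $0$ and $n$, the tower of the $\sigma^{(r)}$ already stabilises at $r=1$, so $\sigma^{(1)}_{(X_i,x_i)}$ serves as the substitution; I expect no genuine obstacle here beyond this translation.
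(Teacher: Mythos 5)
Your proposal is correct and follows essentially the same route as the paper: the paper's justification is precisely that the displayed transfinite construction of the $\sigma^{(r)}$ is the instantiation of corollary(\ref{cor:induced-monad-very-explicit}) at $S=\Gamma E$ read off on the homs (via the identification $E'=\overline{T}$ and the hom-wise computation of the connected colimits involved), with the final clause coming from lemma(\ref{lem:reformulate-simplifying-conditions}) and corollary(\ref{cor:vexp-simple}) exactly as you argue. Your write-up merely makes explicit the bookkeeping that the paper compresses into one sentence.
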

%

\section{Gray and Crans tensor products}\label{sec:Gray-Crans}

Let $A$ be a $\ca T_{\leq{n{+}1}}$-operad over $\Set$ and let $E$ be the associated $\ca T_{\leq{n}}^{\times}$-multitensor. Thus by definition one has
\[ \begin{array}{lccr} {A = \Gamma E} &&& {E = \overline{A}} \end{array} \]
and $\ca G^{n+1}(\Set)^A \iso \Enrich E$ over $\ca G^{n+1}(\Set)$. The monad $E_1$ on $\ca G^n{\Set}$ has as algebras, the structure borne by the homs of an $A$-algebra. Theorem(\ref{thm:lift-mult}) produces the functor operad $E'$ on $\ca G^n(\Set)^{E_1}$ such that
\[ \ca G^{n{+}1}(\Set)^A \iso \Enrich {E'} \iso \Enrich {E} \]
over $\ca G^n(\Set)^{E_1}$. Moreover $E'$ is the unique such functor operad which is distributive.
\begin{example}\label{ex:strict-n-cat}
When $A$ is the terminal $\ca T_{\leq{n{+}1}}$-operad, $E$ is the terminal $\ca T_{\leq{n}}^{\times}$-multitensor, and so $E_1 = \ca T_{\leq{n}}$. Since strict $(n{+}1)$-categories are categories enriched in $\Enrich n$ using cartesian products, and these commute with coproducts (in fact all colimits), it follows by the uniqueness part of theorem(\ref{thm:lift-mult}) that $E'$ is just the cartesian product of $n$-categories. 
\end{example}
\begin{example}\label{ex:opmonoidal}
Suppose that $E$ is a multitensor on $V$ and $T$ is an opmonoidal monad on $(V,E)$. Then one has by theorem(49) of \cite{WebMultMnd} a lifted multitensor $E'$ on $V^T$. On the other hand if moreover $V$ is cocomplete, $E$ is a distributive and accessible functor operad, and $T$ is coproduct preserving and accessible, then $E'$ may also be obtained by applying theorem(\ref{thm:lift-mult}) to the composite multitensor $EM(T)$. When $E$ is given by cartesian product and $T = \ca T_{{\leq}n}$, we recover example(\ref{ex:strict-n-cat}).
\end{example}
\begin{example}\label{ex:Gray}
Take $A$ to be the $\ca T_{\leq{3}}$-operad for Gray categories constructed in \cite{Bat98} (example(4) after corollary(8.1.1)). Since $E_1$ is the monad on $\ca G^2(\Set)$ for 2-categories, in this case $E'$ is a functor operad for 2-categories. However the Gray tensor product of 2-categories \cite{Gray} is part of a symmetric monoidal closed structure. Thus it is distributive as a functor operad, and since Gray categories are categories enriched in the Gray tensor product by definition, it follows that $E'$ is the Gray tensor product. In other words, the general methods of this paper have succeeded in producing the Gray tensor product of $2$-categories from the operad $A$.
\end{example}
\begin{example}\label{ex:Crans}
In \cite{Crans99} Sjoerd Crans explicitly constructed a tensor product on the category of Gray-categories. This explicit construction was extremely complicated. It is possible to exhibit the Crans tensor product as an instance of our general theory, by rewriting his explicit constructions as the construction of the $\ca T_{{\leq}4}$-operad $A$ whose algebras are teisi in his sense. The associated multitensor $E$ has $E_1$ equal to the $\ca T_{{\leq}3}$-operad for Gray categories. Thus theorem(\ref{thm:lift-mult}) constructs a functor operad $E'$ of Gray categories whose enriched categories are teisi. Since the tensor product explicitly constructed by Crans is distributive, the uniqueness of part of theorem(\ref{thm:lift-mult}) ensures that it is indeed $E'$, since teisi are categories enriched in the Crans tensor product by definition.
\end{example}
Honestly writing the details of the $\ca T_{{\leq}4}$-operad of example(\ref{ex:Crans}) is a formidable task and we have omitted this here. In the end though, such details will not be important, because such a tensor product (or more properly a biclosed version thereof) will only be really useful once it is constructed in a conceptual way as part of a general inductive machine.

\section{Contractibility}\label{sec:contractibility}

\subsection{Functoriality and comparison}\label{ssec:functoriality-lifting}
Recall \cite{Str72} \cite{LS00} that when a 2-category $\ca K$ has Eilenberg-Moore objects, the one and 2-cells of the 2-category $\MND(\ca K)$ admit another description. Given monads $(V,T)$ and $(W,S)$ in $\ca K$, to give a monad functor $(H,\psi):(V,T) \rightarrow (W,S)$, is to give $\tilde{H}:V^T \rightarrow W^S$ such that $U^S\tilde{H}=HU^T$. This follows immediately from the universal property of Eilenberg-Moore objects. Similarly to give a monad 2-cell $\phi:(H_1,\psi_1) \rightarrow (H_2,\psi_2)$ is to give $\phi:H_1{\rightarrow}H_2$ and $\tilde{\phi}:\tilde{H_1}{\rightarrow}\tilde{H_2}$ commuting with $U^T$ and $U^S$. Note that Eilenberg-Moore objects in $\CAT/\Set$ are computed as in $\CAT$, and we shall soon apply these observations to the case $\ca K = \CAT/\Set$.
\begin{remark}\label{rem:functoriality-lifting}
Suppose we have a lax monoidal functor $(H,\psi):(V,E) \rightarrow (W,F)$. Then we obtain a commutative diagram
\[ \xygraph{!{0;(1.5,0):(0,.667)::} {\Enrich E}="tl" [r] {\ca G(V^{E_1})}="tm" [r] {\ca GV}="tr" [d] {\ca GW}="br" [l] {\ca G(W^{F_1})}="bm" [l] {\Enrich F}="bl" "tl":"tm":"tr" "bl":"bm":"br" "tl":"bl" "tm":"bm" "tr":"br"} \]
of forgetful functors in $\CAT/\Set$. If moreover $V$ and $W$ are cocomplete and $E$ and $F$ are distributive and accessible, then by theorem(\ref{thm:lift-mult}) we have distributive multitensors $E'$ and $F'$ on $V^{E_1}$ and $V^{F_1}$ respectively, and from the left-most square above we have a monad morphism $(\ca GV,\Gamma{E'}) \rightarrow (\ca GW,\Gamma{F'})$ with underlying functor $\ca G(\psi_1^*)$. By \cite{WebMultMnd} proposition(44) this monad functor is the result of applying $\Gamma$ to a unique lax monoidal functor
\[ (\psi_1^*,\psi') : (V^{E_1},E'){\rightarrow}(W^{F_1},F'). \]
Arguing similarly for monoidal transformations and monad 2-cells, one finds that the assignment $(V,E) \mapsto (V^{E_1},E')$, for cocomplete $V$ and accessible $E$, is 2-functorial.
\end{remark}
\begin{remark}\label{rem:general-comparison}
Suppose that $\varepsilon:E \rightarrow \ca T^{\times}_{{\leq}n}$ is a $\ca T^{\times}_{{\leq}n}$-multitensor. Applying remark(\ref{rem:functoriality-lifting}) in the case $V=W=\ca G^n\Set$, $H=\id$, $\psi{=}\varepsilon$ and example(\ref{ex:strict-n-cat}) one obtains a map
\[ \varepsilon'_{(X_i,x_i)} : {\opE\limits_i}' \varepsilon_1^*(X_i,x_i) \rightarrow \prod\limits_i \varepsilon_1^*(X_i,x_i) \]
of $E_1$-algebras for each sequence $((X_1,x_1),...,(X_n,x_n))$ of strict $n$-categories, since $\varepsilon_1^*:\Enrich n \rightarrow \Alg {E_1}$ as a right adjoint preserves products. This gives a general comparison map between the functor operad $E'$ produced by theorem(\ref{thm:lift-mult}) and cartesian products, defined for sequences of $E_1$-algebras that underlie strict $n$-categories.
\end{remark}
\begin{example}\label{ex:Gray-comparison}
When $E$ is the multitensor of example(\ref{ex:Gray}) for Gray categories, $E_1$ is itself $\ca T_{{\leq}2}$ and $\varepsilon_1{=}\id$, and $\varepsilon'$ gives the well-known comparison map from the Gray tensor product of 2-categories to the cartesian product, which we recall is actually a componentwise biequivalence.
\end{example}
Returning to the situation of remark(\ref{rem:functoriality-lifting}), it is routine to unpack the assignment $(H,\psi) \mapsto (\psi_1^*,\psi')$ as in section(\ref{sec:explicit-lifting}) and so obtain the following 1-cell counterpart of corollary(\ref{cor:lifted-obj-simple}).
\begin{corollary}\label{cor:free-lift-1cell}
Let $(H,\psi):(V,E){\rightarrow}(W,F)$ be a lax monoidal functor such that $V$ and $W$ are cocomplete, and $E$ and $F$ are accessible. Let $(X_1,...,X_n)$ be a sequence of objects of $V$. Then the component of $\psi'$ at the sequence \[ (E_1X_1,...,E_1X_n) \] of free $E_1$-algebras is just $\psi_{X_i}$.
\end{corollary}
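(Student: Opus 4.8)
The plan is to track the component of the natural transformation $\psi'$ through the explicit transfinite construction of section(\ref{sec:explicit-lifting}), restricted to sequences of \emph{free} $E_1$-algebras. The key simplification is that when each $(X_i,x_i)$ is a free algebra $(E_1X_i,\sigma_{X_i})$, the basic coequaliser (\ref{eq:mult-coeq}) associated to the sequence is a split (hence absolute) coequaliser: the canonical presentation of a free $E_1$-algebra is absolute. By corollary(\ref{cor:lifted-obj-simple}) and corollary(\ref{cor:induced-substitution-very-explicit}), on such a sequence the entire transfinite tower collapses, so that $E'$ and its structure may be read off at stage $1$. In particular $\opEone\limits_i(E_1X_i,\sigma_{X_i})$ may be computed directly from the coequaliser $q^{(0)}$, whose codomain is identified (via splitness) with $\opE\limits_iX_i$.

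First I would invoke remark(\ref{rem:functoriality-lifting}) to recall that $(\psi_1^*,\psi')$ is the unique lax monoidal functor whose image under $\Gamma$ is the monad morphism $\ca G(\psi_1^*)$ arising from the left square of the displayed diagram, where $\psi_1^*:V^{F_1}\to V^{E_1}$ is restriction along the monad morphism $\psi_1:E_1\to F_1$ induced by $(H,\psi)$. On free algebras $\psi_1^*$ sends $(F_1HX,\sigma^F_{HX})$ to $(E_1$-algebra obtained by restriction$)$, but more to the point the component $\psi'_{(X_i,x_i)}$ is, by construction of the lift, the map between lifted tensor products induced at stage $m$ by the lax constraint $\psi$ of the original functor. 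I would then unpack this stage-$m$ map using the explicit description, observing that at a sequence of free algebras both source and target tensor products are computed by the $n=1$-collapsed versions of the towers for $E$ and for $F$.

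The heart of the argument is the identification at the initial step. Writing the basic coequalisers for $E$ and for $F$ side by side, the lax monoidal constraint $\psi:H\opE\limits_i(-)\to\opF\limits_iH(-)$ furnishes a comparison of the two (absolute) coequaliser diagrams, and by the compatibility of $\psi$ with the substitutions $\sigma^E,\sigma^F$ and with the units, this comparison descends to the coequaliser codomains. Because the coequalisers are absolute, the induced map on codomains is forced to be exactly the component $\psi_{X_i}:H\opE\limits_iX_i\to\opF\limits_iHX_i$ of the lax constraint itself. I would then check that the transfinite construction of $\psi'$, which is defined stagewise as the map induced on successive coequalisers/colimits by the previous stage, stabilises already at this point on free algebras, so that $\psi'_{(E_1X_i,\sigma_{X_i})}=\psi_{X_i}$.

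The main obstacle I expect is bookkeeping rather than conceptual: one must verify that restricting the full $\Gamma$-level construction of the monad morphism to the hom between $0$ and $n$, and then to sequences of free algebras, really does reduce the stage-$m$ component of $\psi'$ to the stage-$1$ (initial step) data, and that the identification of $q^{(0)}$'s codomain with $\opE\limits_iX_i$ is natural enough to carry $\psi$ across unchanged. Concretely, the delicate point is confirming that the splitting of the basic coequaliser (via $\opE\limits_iu_{X_i}$, as noted after (\ref{eq:mult-coeq})) is compatible with the lax constraint, so that no correction term appears; once this is in hand, the equality $\psi'=\psi$ on free algebras follows by the uniqueness clauses in the universal properties of the coequalisers $q^{(0)}$ and of $v^{(r)}$ used throughout corollary(\ref{cor:induced-substitution-very-explicit}).
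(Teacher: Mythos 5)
Your overall strategy --- unpacking the assignment $(H,\psi)\mapsto(\psi_1^*,\psi')$ of remark(\ref{rem:functoriality-lifting}) through the explicit construction of section(\ref{sec:explicit-lifting}) and exploiting the collapse of the tower at free algebras --- is exactly what the paper intends (it offers nothing beyond ``routine unpacking'' by way of proof). Your treatment of the codomain side is correct: at free $E_1$-algebras the basic coequaliser for $E$ is split by the unit maps, hence absolute, and $\opEpr\limits_i(E_1X_i,\mu_{X_i})$ has underlying object $\opE\limits_iX_i$. The genuine gap is on the domain side. The component of $\psi'$ at $(E_1X_i,\mu_{X_i})$ is a map $\opFpr\limits_i\psi_1^*(E_1X_i,\mu_{X_i})\rightarrow\psi_1^*\opEpr\limits_i(E_1X_i,\mu_{X_i})$, and the $F_1$-algebras $\psi_1^*(E_1X_i,\mu_{X_i})=(HE_1X_i,\,H\mu_{X_i}\circ\psi_{1,E_1X_i})$ need \emph{not} be free $F_1$-algebras (they are when $\psi_1$ is invertible, but not in general). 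Consequently the $F$-side basic coequaliser is not the split one you invoke: it is the coequaliser of $\sigma^F$ and $\opF\limits_i(H\mu_{X_i}\circ\psi_1)$ with domain $\opF\limits_iF_1HE_1X_i$, and its value is a quotient of $\opF\limits_iHE_1X_i$, not $\opF\limits_iHX_i$. So your claim that ``both source and target tensor products are computed by the $n=1$-collapsed versions of the towers,'' and with it the conclusion that the induced map on codomains ``is forced to be exactly $\psi_{X_i}$,'' does not follow.

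What the unpacking actually yields is weaker: writing $q:\opF\limits_iHE_1X_i\rightarrow U^{F_1}\opFpr\limits_i\psi_1^*(E_1X_i,\mu_{X_i})$ for the canonical map, the component of $\psi'$ satisfies $U^{F_1}\psi'\circ q=H\sigma\circ\psi_{E_1X_i}$, and then naturality of $\psi$ applied to $u_{X_i}:X_i\rightarrow E_1X_i$ together with the unit law $\sigma\circ\opE\limits_iu_{X_i}=\id$ gives $U^{F_1}\psi'\circ q\circ\opF\limits_iHu_{X_i}=\psi_{X_i}$. The literal statement therefore requires the comparison $q\circ\opF\limits_iHu_{X_i}:\opF\limits_iHX_i\rightarrow U^{F_1}\opFpr\limits_i\psi_1^*(E_1X_i,\mu_{X_i})$ to be invertible; this holds when $\psi_1$ is invertible (as in example(\ref{ex:Gray-comparison}), where $\psi_1=\id$), but fails in general, so no argument can close your gap without an extra hypothesis --- indeed the statement as printed needs one. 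Concretely, take $V=W=\Set$, $E$ the cartesian product, $\opF\limits_iX_i=M\times\prod_iX_i$ for a nontrivial monoid $M$, $H=\id$ and $\psi$ the projection: then $E_1=\id$, $F_1=M\times(-)$, $\psi_1^*$ equips a set with the trivial $M$-action, and the $F$-side coequaliser identifies $(mm_1\cdots m_n,\vec{x})$ with $(m,\vec{x})$, collapsing $M$ entirely; the component of $\psi'$ comes out as an isomorphism $\prod_iX_i\rightarrow\prod_iX_i$, whereas $\psi_{X_i}$ is the non-invertible projection $M\times\prod_iX_i\rightarrow\prod_iX_i$, and the two are not even isomorphic as arrows. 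Either assume such an invertibility hypothesis, or weaken the conclusion to the ``induced map'' identity displayed above. (A small separate point: the paper's constraint convention is $\psi_{X_i}:\opF\limits_iHX_i\rightarrow H\opE\limits_iX_i$, opposite to the direction you wrote, though this does not affect where the gap lies.)
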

%

\subsection{Contractible multitensors revisited}\label{sec:contractibility-lifted-multitensors}
In section(7.3) of \cite{WebMultMnd} we saw that a $\ca T_{\leq{n{+}1}}$-operad $A$ over $\Set$ is contractible iff its associated $\ca T_{\leq{n}}^{\times}$-multitensor $E$ is contractible. We now extend this result to the associated functor operad $E'$ on $\ca G^n(\Set)^{E_1}$.
\begin{proposition}\label{prop:contractible}
Let $(H,\psi):(V,E){\rightarrow}(W,F)$ be a lax monoidal functor between distributive lax monoidal categories, and $\ca I$ a class of maps in $W$. Suppose that $W$ is extensive, $H$ preserves coproducts and the codomains of maps in $\ca I$ are connected. Then the following statements are equivalent
\begin{itemize}
\item[(1)] $\psi$ is a trivial $\ca I$-fibration.
\item[(2)] $\Gamma\psi$ is a trivial $\ca I^+$-fibration.
\end{itemize}
and moreover when in addition $V$ and $W$ are cocomplete and $E$ and $F$ are accessible, these conditions are also equivalent to
\begin{itemize}
\item[(3)] The components of $U^F\psi'$ at sequences $(E_1X_1,...,E_1X_n)$ of free $E_1$-algebras are trivial $\ca I$-fibrations.
\end{itemize}
\end{proposition}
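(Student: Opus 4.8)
The plan is to prove the equivalence $(1)\Leftrightarrow(2)$ first, as this is a statement purely about the $\Gamma$-construction and extensivity, and only then bring in the lifting machinery to handle $(3)$. For $(1)\Leftrightarrow(2)$, recall that $\psi$ being a trivial $\ca I$-fibration means each component $\psi_{X_i}:\opE\limits_i X_i \to \opF\limits_i HX_i$ has the right lifting property against $\ca I$, and $\Gamma\psi$ being a trivial $\ca I^+$-fibration means its components against the enlarged class $\ca I^+$ do. I would unpack the object map
\[ \Gamma E X(a,b) = \coprod\limits_{a=x_0,...,x_n=b} \opE\limits_i X(x_{i-1},x_i), \]
so that $\Gamma\psi$ is, componentwise, a coproduct of copies of the components of $\psi$. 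The hypotheses that $W$ is extensive, $H$ preserves coproducts, and the codomains of maps in $\ca I$ are connected are exactly what is needed to reduce a lifting problem against a connected-codomain map into a coproduct to a lifting problem against a single summand: connectedness of the codomain forces any map from it into a coproduct to factor through one summand, and extensivity guarantees this factorisation is well-behaved. This is presumably the content of the passage from $\ca I$ to $\ca I^+$, and I expect this equivalence to be essentially a diagram-chase using extensivity.

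\textbf{The key step} is then to relate condition $(3)$ to the others, and here I would lean entirely on the explicit description of $\psi'$ from corollary(\ref{cor:free-lift-1cell}). That corollary states precisely that the component of $\psi'$ at a sequence of \emph{free} $E_1$-algebras $(E_1X_1,...,E_1X_n)$ is just $\psi_{X_i}$. Applying $U^F$ (which merely forgets the $F_1$-algebra structure and lands us back in $W$), the component $U^F\psi'$ at $(E_1X_1,...,E_1X_n)$ is again identified with $\psi_{X_i}:\opE\limits_i X_i \to \opF\limits_i HX_i$, up to the identifications $E'_1(E_1X,x)\iso(E_1X,x)$ and the analogous fact for $F'$ noted after corollary(\ref{cor:lifted-obj-simple}). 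Thus the components appearing in $(3)$ are literally (isomorphic to) the components $\psi_{X_i}$ appearing in $(1)$, and so $(1)\Leftrightarrow(3)$ reduces to the observation that these are the same family of maps, whence they are simultaneously trivial $\ca I$-fibrations.

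\textbf{The main obstacle} I anticipate is bookkeeping rather than conceptual: one must be careful that the identification of $U^F\psi'$ at free algebras with $\psi$ is genuinely an identification of maps in $W$ (not merely in $W^{F_1}$ after forgetting), and that the lifting property is tested against the correct class $\ca I$ in $W$ in all three conditions. A secondary subtlety is confirming that restricting attention to sequences of \emph{free} $E_1$-algebras loses no information for condition $(3)$ to imply the others; but since $(3)$ is stated only about free algebras and is being shown equivalent to $(1)$, which also only involves the raw components $\psi_{X_i}$, the free-algebra restriction is exactly matched on both sides and no general-algebra information is required. I would therefore structure the proof as: establish $(1)\Leftrightarrow(2)$ by the extensivity/connectedness argument on coproducts, then establish $(1)\Leftrightarrow(3)$ by invoking corollary(\ref{cor:free-lift-1cell}) to identify the relevant components, assembling these into the stated chain of equivalences.
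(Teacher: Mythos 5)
Your proposal is correct and takes essentially the same approach as the paper: the paper disposes of $(1)\iff(2)$ by citing proposition (58) of \cite{WebMultMnd} (whose content is exactly the extensivity/connectedness argument you sketch for lifting against coproducts), and links condition (3) to the others via corollary(\ref{cor:free-lift-1cell}), precisely as you do. The only cosmetic difference is that the paper words the second step as $(2)\iff(3)$ while you argue $(1)\iff(3)$; given the first equivalence, this is immaterial.
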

\begin{proof}
$(1){\iff}(2)$ is proposition(58) of \cite{WebMultMnd} and $(2){\iff}(3)$ follows immediately from corollary(\ref{cor:free-lift-1cell}).
\end{proof}
Recall from \cite{WebMultMnd} section(7.2) that $\ca I_{{\leq}n}$ is the set of boundary inclusions of $m$-globes for $m \leq n$. Then proposition(\ref{prop:contractible}) has the immediate
\begin{corollary}\label{cor:contractible}
Let $0 \leq n \leq \infty$, $\alpha:A \rightarrow \ca T_{{\leq}n{+}1}$ be an $n{+}1$-operad over $\Set$ and $\varepsilon:E \rightarrow \ca T^{\times}_{{\leq}n}$ be the corresponding $n$-multitensor. TFSAE:
\begin{enumerate}
\item  $\alpha:A \rightarrow \ca T_{{\leq}n{+}1}$ is contractible.\label{cont1}
\item  $\varepsilon:E \rightarrow \ca T^{\times}_{{\leq}n}$ is contractible.\label{cont2}
\item  The components of $\varepsilon'_{(X_i,x_i)}$ of remark(\ref{rem:general-comparison}) are trivial $\ca I_{{\leq}n}$-fibrations of $n$-globular sets, when the $(X_i,x_i)$ are free strict $n$-categories.
\end{enumerate}
\end{corollary}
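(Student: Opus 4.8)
The plan is to reduce Corollary \ref{cor:contractible} to Proposition \ref{prop:contractible} by instantiating the latter in the globular setting. Specifically, I would take $V = W = \ca G^n(\Set)$, let $H = \id$, and let the lax monoidal functor $(H,\psi)$ be the multitensor map $\varepsilon : E \rightarrow \ca T^{\times}_{{\leq}n}$ itself. This is exactly the instantiation already set up in remark(\ref{rem:general-comparison}), where $\psi = \varepsilon$ and where the induced comparison map $\varepsilon'$ is obtained from applying the $2$-functoriality of remark(\ref{rem:functoriality-lifting}). The first step, then, is to verify that the hypotheses of Proposition \ref{prop:contractible} hold for this choice: $W = \ca G^n(\Set)$ is extensive (being a presheaf topos), $H = \id$ trivially preserves coproducts, and $V$ and $W$ are cocomplete with $E$ and $\ca T^{\times}_{{\leq}n}$ distributive and accessible. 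The class $\ca I$ should be taken to be $\ca I_{{\leq}n}$, whose codomains are the $m$-globes for $m \leq n$; these are connected, as required.

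With the hypotheses checked, the equivalence $(1) \iff (2)$ of Corollary \ref{cor:contractible} is exactly the content of \cite{WebMultMnd} section(7.3), which established that the operad $\alpha : A \rightarrow \ca T_{{\leq}n{+}1}$ is contractible iff the associated multitensor $\varepsilon : E \rightarrow \ca T^{\times}_{{\leq}n}$ is. The essential new content is $(2) \iff (3)$. Here I would simply apply the $(1) \iff (3)$ equivalence of Proposition \ref{prop:contractible}: that proposition says $\psi = \varepsilon$ is a trivial $\ca I_{{\leq}n}$-fibration exactly when the components of $U^{\ca T^{\times}_{{\leq}n}}\varepsilon'$ at sequences of free $E_1$-algebras are trivial $\ca I_{{\leq}n}$-fibrations. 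Contractibility of the multitensor map $\varepsilon$ is precisely the statement that $\varepsilon$ is a trivial $\ca I_{{\leq}n}$-fibration in the appropriate sense, so condition (2) of the corollary matches condition (1) of the proposition, and condition (3) of the corollary matches condition (3) of the proposition once one identifies the comparison map $\varepsilon'$ of remark(\ref{rem:general-comparison}) with the $\psi'$ produced by the functoriality machinery.

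The one point requiring genuine care --- and the main obstacle --- is matching the description of condition (3) in the corollary against condition (3) in the proposition. Proposition \ref{prop:contractible} speaks of the components of $U^F\psi'$, whereas the corollary speaks of the components $\varepsilon'_{(X_i,x_i)}$ of remark(\ref{rem:general-comparison}). I would use remark(\ref{rem:general-comparison}) to identify $\varepsilon'$ as the comparison map $E' \rightarrow \prod$ induced by $\varepsilon$ together with example(\ref{ex:strict-n-cat}), and then invoke corollary(\ref{cor:free-lift-1cell}) to pin down its component at a sequence of free $E_1$-algebras $(E_1X_1,...,E_1X_n)$ as simply $\varepsilon_{X_i}$ (up to the forgetful functor $U^{E_1}$). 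Since $U^{E_1}$ reflects the property of being a trivial $\ca I_{{\leq}n}$-fibration --- being over $\Set$ and creating the relevant globular structure --- the condition on the components of $U^F\psi'$ in the proposition translates directly into the condition on $\varepsilon'_{(X_i,x_i)}$ for free strict $n$-categories stated in the corollary. Stringing these identifications together yields $(2) \iff (3)$, completing the proof.
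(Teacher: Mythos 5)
Your proposal is correct and takes essentially the same approach as the paper: the paper obtains the corollary as an immediate instantiation of proposition(\ref{prop:contractible}) at $V=W=\ca G^n\Set$, $H=\id$, $\psi=\varepsilon$, $\ca I=\ca I_{{\leq}n}$ (exactly the setup of remark(\ref{rem:general-comparison})), with condition (3) matched via corollary(\ref{cor:free-lift-1cell}), and it likewise notes that $(1)\iff(2)$ already appeared as corollary(59) of \cite{WebMultMnd}. The only cosmetic difference is that the paper can also read $(1)\iff(2)$ directly off the proposition, since $\Gamma\varepsilon=\alpha$ and $\ca I_{{\leq}n}^+=\ca I_{{\leq}n{+}1}$ identify its condition (2) with contractibility of $\alpha$, whereas you route that equivalence through the citation.
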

The equivalence of (\ref{cont1}) and (\ref{cont2}) appeared already as \cite{WebMultMnd} corollary(59).
\begin{example}\label{ex:Gray-contractible}
When $\alpha$ is the $\ca T_{{\leq}3}$-operad for Gray-categories, the contractibility of $\alpha$ is a consequence of the fact that the canonical 2-functors from the Gray to the cartesian tensor product are identity-on-object biequivalences.
\end{example}

\section{Standard convolution}\label{sec:convolution}

\subsection{Recalling convolution for multicategories}\label{sec:substitudes}
The set of multimaps $(X_1,...,X_n) \to Y$ in a given multicategory $\C$ shall be denoted as $\C(X_1,...,X_n;Y)$. Recall that a linear map in $\C$ is a multimap whose domain is a sequence of length $1$. The objects of $\C$ and linear maps between them form a category, which we denote as $\C_l$, and we call this the linear part of $\C$. The set of objects of $\C$ is denoted as $\C_0$. Given objects
\[ \begin{array}{lcccr} {A_{11}, ..., A_{1n_1}, ... ..., A_{k1},...,A_{kn_k}} && {B_1,...,B_k} && {C} \end{array} \]
of $\C$, we denote by
\[ {\sigma_{A,B,C} : \C(B_1,...,B_k;C) \times \prod\limits_i \C(A_{i1},...,A_{in_i};B_i) \rightarrow \C(A_{11},...,A_{kn_k};C)} \]
the substitution functions of the multicategory $\C$. One thus induces a function
\[ \sigma_{A,C} : \int^{B_1,...,B_k} \C(B_1,...,B_k;C) \times \prod\limits_i \C(A_{i1},...,A_{in_i};B_i) \rightarrow \C(A_{11},...,A_{kn_k};C) \]
in which for the purposes of making sense of this coend, the objects $B_1,...,B_k$ are regarded as objects of the category $\C_{l}$. A \emph{promonoidal category} in the sense of Day \cite{DayConvolution}, in the unenriched context, can be defined as a multicategory $\C$ such that these induced functions $\sigma_{A,C}$ are all bijective. A \emph{promonoidal structure} on a category $\D$ is a promonoidal category $\C$ such that $\C_l = \D^{\op}$.

A lax monoidal category $(V,E)$ is \emph{cocomplete} when $V$ is cocomplete as a category and $E_n:V^n \to V$ preserves colimits in each variable for all $n \in \N$. In this situation the multitensor $E$ is also said to be cocomplete. When $\C$ is small it defines a functor operad on the functor category $[\C_l,\Set]$ whose tensor product $F$ is given by the coend
\[ \opF\limits_i X_i = \int^{C_1,...,C_n} \C(C_1,...,C_n;-) \times \prod\limits_i X_iC_i \]
and substitution is defined in the evident way from that of $\C$. By proposition(2.1) of \cite{DS-Substitude} $F$ is a cocomplete functor operad and is called the \emph{standard convolution} structure of $\C$ on $[\C_l,\Set]$. By proposition(2.2) of \cite{DS-Substitude}, for each fixed category $\D$, standard convolution gives an equivalence between multicategories on $\C$ such that $\C_l = \D$ and cocomplete functor operads on $[\D,\Set]$, which restricts to the well-known \cite{DayConvolution} equivalence between promonoidal structures on $\D^{\op}$ and closed monoidal structures on $[\D,\Set]$.

We have recalled these facts in a very special case compared with the generality at which this theory is developed in \cite{DS-Substitude}. In that work all structures are considered as enriched over some nice symmetric monoidal closed base $\ca V$, and moreover rather than $\D = \C_l$ as above, one has instead an identity on objects functor $\D \to \C_l$. The resulting combined setting is then what are called $\ca V$-substitudes in \cite{DS-Substitude}, and in the $\ca V = \Set$ case the extra generality of the functor $\D \to \C_l$, corresponds at the level of multitensors, to the consideration of general closed multitensors on $[\D,\Set]$ instead of mere functor operads. In this section we shall recover standard convolution, for the special case that we have described above, from the lifting theorem.

\subsection{Convolution via lifting}\label{sec:convolution-via-lifting}
Given a multicategory $\C$ we define the multitensor $E$ on $[\C_0,\Set]$ via the formula
\[ \left(\opE\limits_{1{\leq}i{\leq}n} X_i\right)(C) = \coprod\limits_{C_1,...,C_n} \left(\C(C_1,...,C_n;C) \times \prod\limits_{1{\leq}i{\leq}n} X_i(C_i)\right) \]
using the unit and compositions for $\C$ in the evident way to give the unit $u$ and substitution $\sigma$ for $E$. When $\C_0$ has only one element, this is the multitensor on $\Set$ coming from the operad $P$ described in \cite{EnHopI} and \cite{WebMultMnd}, whose tensor product is given by the formula
\[ \opE\limits_{1{\leq}i{\leq}n} X_i = P_n \times X_1 \times ... \times X_n.  \]
An $E$-category with one object is exactly an algebra of the coloured operad $P$ in the usual sense. A general $E$-category amounts to a set $X_0$, sets $X(x_1,x_2)(C)$ for all $x_1,x_2 \in X_0$ and $C \in \C_0$, and functions
\begin{equation}\label{eq:sc-cat} \C(C_1,...,C_n;C) \times \prod\limits_i X(x_{i-1},x_i)(C_i) \to X(x_0,x_n)(C) \end{equation}
compatible in the evident way with the multicategory structure of $\C$. On the other hand an $F$-category amounts to a set $X_0$, sets $X(x_1,x_2)(C)$ natural in $C$, and maps as in (\ref{eq:sc-cat}) but which are natural in $C_1,...,C_n,C$, and compatible with $\C$'s multicategory structure. However this added naturality enjoyed by an $F$-category isn't really an additional condition, because it follows from the compatibility with the linear maps of $\C$. Thus $E$ and $F$-categories coincide, and one may easily extend this to functors and so give $\Enrich E \iso \Enrich F$ over $\ca G[\C_0,\Set]$.

The unary part of $E$ is given on objects by
\[ E_1(X)(C) = \coprod\limits_{D} \C_l(D,C) \times X(D) \]
which should be familiar -- $E_1$ is the monad on $[\C_0,\Set]$ whose algebras are functors $\C_l \to \Set$, and may be recovered from left kan extension and restriction along the inclusion of objects $\C_0 \hookrightarrow \C_l$. Thus the category of algebras of $E_1$ may be identified with the functor category $[\C_l,\Set]$. Since the multitensor $E$ is clearly cocomplete, it satisfies the hypotheses of theorem(\ref{thm:lift-mult}), and so one has a unique finitary distributive multitensor $E'$ on $[\C_l,\Set]$ such that $\Enrich E \iso \Enrich {E'}$ over $\ca G[\C_0,\Set]$. By uniqueness we have
\begin{proposition}\label{prop:convolution-via-lifting}
Let $\C$ be a multicategory, $F$ be the standard convolution structure on $[\C_l,\Set]$ and $E$ be the multitensor on $[\C_0,\Set]$ defined above. Then one has an isomorphism $F \iso E'$ of multitensors.
\end{proposition}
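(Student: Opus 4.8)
The plan is to obtain the isomorphism $F \iso E'$ from the uniqueness clause of Theorem~\ref{thm:lift-mult}, rather than by comparing the two multitensors directly. Since $E$ is a cocomplete multitensor on the cocomplete category $[\C_0,\Set]$, each $E_n$ preserves all colimits in each variable, so $E$ is in particular finitary (hence $\aleph_0$-accessible) and distributive; the theorem therefore applies and produces, to within isomorphism, a \emph{unique} distributive functor operad $E'$ on $([\C_0,\Set])^{E_1} \iso [\C_l,\Set]$ with $\Enrich{E'} \iso \Enrich E$ over $\ca G[\C_0,\Set]$. It thus suffices to exhibit the standard convolution $F$ as a second distributive functor operad on $[\C_l,\Set]$ enjoying the same enriched-category property, i.e. with $\Enrich F \iso \Enrich E$ over $\ca G[\C_0,\Set]$; uniqueness then forces $F \iso E'$.

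First I would confirm that $F$ meets the structural hypotheses of the uniqueness clause. By proposition(2.1) of \cite{DS-Substitude}, $F$ is a cocomplete functor operad on $[\C_l,\Set]$, so its unary part is the identity monad, and each $F_n$ preserves colimits, and in particular coproducts, in each variable. The latter is exactly the distributivity demanded by Theorem~\ref{thm:lift-mult}. Hence, under the identification $([\C_0,\Set])^{E_1} \iso [\C_l,\Set]$ recorded in the text above, $F$ is a distributive functor operad on the correct category and so is a genuine competitor for $E'$.

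The isomorphism $\Enrich F \iso \Enrich E$ over $\ca G[\C_0,\Set]$ is precisely the content of the discussion preceding this proposition: an $E$-category and an $F$-category both amount to a set $X_0$ together with families $X(x_1,x_2)(C)$ and composition functions of the form (\ref{eq:sc-cat}) compatible with the multicategory structure of $\C$, the only apparent difference being the extra naturality in $C_1,\dots,C_n,C$ required of an $F$-category. That naturality is automatic, being forced by compatibility with the linear maps of $\C$, so the two notions coincide; the same comparison extends to enriched functors, giving the isomorphism over $\ca G[\C_0,\Set]$.

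With both verifications in hand, applying the uniqueness part of Theorem~\ref{thm:lift-mult} completes the argument, and notably no direct matching of the substitutions of $F$ and $E'$ is needed, since this is subsumed by uniqueness. The genuinely substantive step, the identification of $E$- and $F$-categories, has already been carried out in the text; within the proof itself the only point requiring care is that the base of comparison is $\ca G[\C_0,\Set]$ throughout — that is, both $\Enrich F$ and $\Enrich{E'}$ must be compared to $\Enrich E$ via their underlying $\ca G[\C_0,\Set]$-valued forgetful functors (equivalently, after composing with $\ca G(U^{E_1})$) rather than via $\ca G[\C_l,\Set]$ — since it is only in this form that the hypotheses of the uniqueness clause are literally met.
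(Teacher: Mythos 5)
Your proposal is correct and follows exactly the paper's own route: the paper's proof consists of the preceding discussion (the identification of $E$- and $F$-categories over $\ca G[\C_0,\Set]$, the identification $[\C_0,\Set]^{E_1} \iso [\C_l,\Set]$, and the observation that the cocomplete functor operad $F$ is distributive) followed by the words ``By uniqueness we have,'' which is precisely your invocation of the uniqueness clause of Theorem~\ref{thm:lift-mult}. Your extra care about comparing both $\Enrich F$ and $\Enrich{E'}$ to $\Enrich E$ over the base $\ca G[\C_0,\Set]$ is a faithful, slightly more explicit rendering of the same argument.
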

In particular when $\C$ is a promonoidal category proposition(\ref{prop:convolution-via-lifting}) expresses classical Day convolution, in the unenriched context, as a lift in the sense of theorem(\ref{thm:lift-mult}).

\appendix

\section{Transfinite constructions in monad theory}\label{sec:transfinite}

\subsection{Overview}\label{ssec:transfinite-overview}
Here we review some of the transfinite constructions in monad theory that we will use in sections(\ref{sec:lifting-theorem}) and (\ref{sec:explicit-lifting}). An earlier reference for these matters is \cite{Kel80}. However due to the technical nature of this material, and our need for its details when we come to making our constructions explicit, we feel that it is appropriate to give a rather thorough account of this background.

\subsection{Coequalisers in categories of algebras}\label{ssec:coequalisers}
Let $T$ be a monad on a category $V$ that has filtered colimits and coequalisers and let
\[ \xygraph{!{0;(2,0):} {(A,a)}="l" [r] {(B,b)}="r" "l":@<-1ex>"r"_-{g} "l":@<1ex>"r"^-{f}} \]
be morphisms in $V^T$. We shall now construct morphisms
\[ \begin{array}{lcccr} {v_n : TQ_n \rightarrow Q_{n{+}1}} && {q_n:Q_n \to Q_{n{+}1}} && {q_{{<}n}:B \to Q_n} \end{array} \]
starting with $Q_0=B$ by transfinite induction on $n$, such that for $n$ large enough $q_{<{n}}$ is the coequaliser of $f$ and $g$ in $V^T$ when $T$ is accessible. The initial stages of this construction are described in the following diagram.
\[ \xygraph{!{0;(1.5,0):(0,.666)::} {T^2A}="tA" [r] {T^2B}="tB" [r] {T^2Q_1}="t0" [r] {T^2Q_2}="t1" [r] {T^2Q_3}="t2" [r] {T^2Q_4}="t3" [r] {...}="t4"
"tA" [d] {TA}="mA" [r] {TB}="mB" [r] {TQ_1}="m0" [r] {TQ_2}="m1" [r] {TQ_3}="m2" [r] {TQ_4}="m3" [r] {...}="m4"
"mA" [d] {A}="bA" [r] {B}="bB" [r] {Q_1}="b0" [r] {Q_2}="b1" [r] {Q_3}="b2" [r] {Q_4}="b3" [r] {...}="b4"
"tA":@<-1ex>"tB"_-{T^2g} "tA":@<1ex>"tB"^-{T^2f}:"t0"^-{T^2q_0}:"t1"^-{T^2q_1}:"t2"^-{T^2q_2}:"t3"^-{T^2q_3}:"t4"
"mA":@<-1ex>"mB"_-{Tg} "mA":@<1ex>"mB"^-{Tf}:"m0"^-{Tq_0}:"m1"^-{Tq_1}:"m2"^-{Tq_2}:"m3"^-{Tq_3}:"m4"
"bA":@<-1ex>"bB"_-{g} "bA":@<1ex>"bB"^-{f}:"b0"_-{q_0}:"b1"_-{q_1}:"b2"_-{q_2}:"b3"_-{q_3}:"b4"
"tA":@<-1ex>"mA"_-{\mu}:@<-1ex>"bA"_-{a} "tA":@<1ex>"mA"^-{Ta}:@<1ex>@{<-}"bA"^-{\eta}
"tB":@<-1ex>"mB"_-{\mu}:@<-1ex>"bB"_-{b} "tB":@<1ex>"mB"^-{Tb}:@<1ex>@{<-}"bB"^-{\eta}
"t0":"m0"^-{\mu}:@{<-}"b0"^-{\eta} "t1":"m1"^-{\mu}:@{<-}"b1"^-{\eta} "t2":"m2"^-{\mu}:@{<-}"b2"^-{\eta} "t3":"m3"^-{\mu}:@{<-}"b3"^-{\eta}
"tB":"m0"^-{Tv_0} "mB":"b0"^-{v_0} "t0":"m1"^-{Tv_1} "m0":"b1"^-{v_1} "t1":"m2"^-{Tv_2} "m1":"b2"^-{v_2} "t2":"m3"^-{Tv_3} "m2":"b3"^-{v_3}} \]
{\bf Initial step}. Define $q_{{<}0}$ to be the identity, $q_0$ to be the coequaliser of $f$ and $g$, $q_{<{1}}=q_0$ and $v_0=q_0b$. Note also that $q_0=v_0\eta_B$.
\\ \\
{\bf Inductive step}. Assuming that $v_n$, $q_n$ and $q_{<{n{+}1}}$ are given, we define $v_{n{+}1}$ to be the coequaliser of $T(q_n)\mu$ and $Tv_n$, $q_{n{+}1}=v_{n{+}1}\eta$ and $q_{<{n{+}2}}=q_{n{+}1}q_{<{n{+}1}}$. One may easily verify that $q_{n{+}1}v_n=v_{n{+}1}T(q_n)$, and that $v_1$ could equally well have been defined as the coequaliser of $\eta{v_0}$ and $Tq_0$.
\\ \\
{\bf Limit step}. Define $Q_n$ as the colimit of the sequence given by the objects $Q_m$ and morphisms $q_m$ for $m < n$, and $q_{<{n}}$ for the component of the universal cocone at $m=0$.
\[ \xygraph{!{0;(3,0):(0,.333)::} {\colim_{m{<}n} T^2Q_m}="tl" [r] {\colim_{m{<}n} TQ_m}="tm" [r] {\colim_{m{<}n} Q_m}="tr" [d] {Q_n}="br" [l] {TQ_n}="bm" [l] {T^2Q_n}="bl" "tl":@<1ex>"tm"^-{\mu_{<{n}}}:@<1ex>"tr"^-{v_{<{n}}} "tl":@<-1ex>"tm"_-{(Tv)_{<{n}}}:@<-1ex>@{<-}"tr"_-{\eta_{<{n}}} "bl":"bm"_-{\mu}:@{<-}"br"_-{\eta} "tl":"bl"_{o_{n,2}} "tm":"bm"^{o_{n,1}} "tr":@{=}"br"} \]
We write $o_{n,1}$ and $o_{n,2}$ for the obstruction maps measuring the extent to which $T$ and $T^2$ preserve the colimit defining $Q_n$. We write $\mu_{<{n}}$, $(Tv)_{<{n}}$, $v_{<{n}}$ and $\eta_{<{n}}$ for the maps induced by the $\mu_{Q_m}$, $Tv_m$, $v_m$ and $\eta_{Q_m}$ for $m < n$ respectively. The equations
\[ \begin{array}{ccccccc} {{\mu}o_{n,2}=o_{n,1}\mu_{<{n}}} && {\eta=o_{n,1}\eta_{<{n}}} && {v_{<{n}}(Tv)_{<{n}}=v_{<{n}}\mu_{<{n}}} && {v_{<{n}}\eta_{<{n}}=\id} \end{array} \]
follow easily from the definitions. Define $v_n$ as the coequaliser of $o_{n,1}\mu_{<{n}}$ and $o_{n,1}(Tv)_{<{n}}$, $q_n=v_n\eta$ and $q_{<{n{+}1}}=q_nq_{<{n}}$.
\\ \\
{\bf Stabilisation}. We say that the sequence \emph{stabilises at $n$} when $q_n$ and $q_{n{+}1}$ are isomorphisms. In the case $n=0$ one may easily show that stabilisation is equivalent to just $q_0$ being an isomorphism, which is the same as saying that $f=g$.
\begin{lemma}\label{stable-limit}
If $n$ is a limit ordinal and $o_{n,1}$ and $o_{n,2}$ are invertible, then the sequence stabilises at $n$.
\end{lemma}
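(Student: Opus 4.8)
The plan is to show that, once $o_{n,1}$ and $o_{n,2}$ are invertible, the partial structure maps $v_m$ (for $m<n$) assemble into a genuine $T$-algebra structure on $Q_n$, and that the coequaliser defining $v_n$ merely reconstructs the canonical presentation of this algebra; this forces $Q_{n+1}\iso Q_n$ and makes $q_n$ invertible, and the same argument one step later disposes of $q_{n+1}$. First I would use invertibility of $o_{n,1}$ and $o_{n,2}$ to identify $TQ_n\iso\colim_{m<n}TQ_m$ and $T^2Q_n\iso\colim_{m<n}T^2Q_m$, and set $\bar v:=v_{<n}\,o_{n,1}^{-1}:TQ_n\to Q_n$. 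The recorded equations $v_{<n}\eta_{<n}=\id$ and $\eta=o_{n,1}\eta_{<n}$ give at once the unit law $\bar v\,\eta_{Q_n}=\id$. The heart of the matter is the claim that $v_{<n}:\colim_{m<n}TQ_m\to Q_n$ is the coequaliser of the pair $(\mu_{<n},(Tv)_{<n})$ --- equivalently, after transport along the isomorphism $o_{n,1}$, that $\bar v$ is the coequaliser of the very pair $(o_{n,1}\mu_{<n},\,o_{n,1}(Tv)_{<n})$ that defines $v_n$.

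That $v_{<n}$ coequalises this pair is one of the equations recorded at the limit step, and $v_{<n}$ is a split epimorphism since $v_{<n}\eta_{<n}=\id$, so only the factorisation property remains. For this, given $k:\colim_{m<n}TQ_m\to Z$ with $k\mu_{<n}=k(Tv)_{<n}$, I would show that $\bar k:=k\eta_{<n}$ factors $k$ through $v_{<n}$. The device is the single section $t:\colim_{m<n}TQ_m\to\colim_{m<n}T^2Q_m$ induced by the unit components $\eta_{TQ_m}$: the monad unit law $\mu\cdot\eta T=\id$ yields $\mu_{<n}t=\id$, while naturality of $\eta$ gives $(Tv)_{<n}t=\eta_{<n}v_{<n}$. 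Therefore $\bar k\,v_{<n}=k\eta_{<n}v_{<n}=k(Tv)_{<n}t=k\mu_{<n}t=k$, and uniqueness is immediate since $v_{<n}$ is epi. As $\bar v$ and $v_n$ are then coequalisers of the same pair, the comparison isomorphism $\theta:Q_{n+1}\to Q_n$ satisfies $\theta q_n=\theta v_n\eta_{Q_n}=\bar v\eta_{Q_n}=\id$, so $q_n$ is invertible.

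To reach $q_{n+1}$ I would promote $\bar v$ to a genuine algebra. Associativity $\bar v\mu_{Q_n}=\bar v\,T\bar v$ reduces, after precomposition with the isomorphism $o_{n,2}$ and use of $v_{<n}\mu_{<n}=v_{<n}(Tv)_{<n}$, to the single identity $T\bar v\cdot o_{n,2}=o_{n,1}(Tv)_{<n}$. This I would verify by unwinding the comparison maps: factoring $o_{n,2}=T(o_{n,1})\cdot p$ through the canonical comparison $p:\colim_{m<n}T^2Q_m\to T(\colim_{m<n}TQ_m)$, both sides evaluate on each coprojection to $T(j_{m+1})\cdot Tv_m$, where $j_{m+1}$ is the coprojection of $\colim_{m<n}Q_m$. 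With $(Q_n,\bar v)$ an algebra, its canonical presentation exhibits $\bar v$ as the split coequaliser of $(\mu_{Q_n},T\bar v)$. Transporting the coequaliser $v_{n+1}=\tn{coeq}(T(q_n)\mu,Tv_n)$ along the isomorphism $T\theta$ converts its pair into $(\mu_{Q_n},T\bar v)$, so $v_{n+1}$ is once more this canonical presentation; the resulting comparison isomorphism $\theta'$ satisfies $\theta'q_{n+1}=\theta$ by naturality of $\eta$, whence $q_{n+1}$ is invertible as well.

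The main obstacle is the central coequaliser identification of the second paragraph: recognising $v_{<n}$ as an honest coequaliser, rather than merely a split epimorphism that coequalises the pair, hinges on the clean factorisation supplied by the unit-induced section $t$ together with the colimit coprojection identities; the same careful comparison-map bookkeeping is exactly what underlies the associativity identity $T\bar v\cdot o_{n,2}=o_{n,1}(Tv)_{<n}$. Everything else is formal transport along the isomorphisms $o_{n,1}$, $o_{n,2}$, $\theta$ and $T\theta$.
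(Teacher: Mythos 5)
Your proof is correct, and while it rests on the same two computations as the paper's, it is organised along a genuinely different route. The paper argues by direct construction of inverses: it uses the universal property of $v_n$ to produce $q'_n$ with $q'_nv_n=v_{<n}o_{n,1}^{-1}$ (only the \emph{coequalising} property of $v_{<n}o_{n,1}^{-1}$ is needed there, not that it is itself a coequaliser), checks $q'_n=q_n^{-1}$ using $v_{<n}\eta_{<n}=\id$ and the componentwise identity $v_nT(q_{m,n})=q_nq_{m+1,n}v_m$, and then uses the universal property of $v_{n+1}$ together with equation(\ref{eq:succ-limit}), namely $T(q_n)o_{n,1}(Tv)_{<n}=T(v_n)o_{n,2}$, to produce $q_{n+1}^{-1}$. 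Your key identity $T\bar v\cdot o_{n,2}=o_{n,1}(Tv)_{<n}$ is exactly (\ref{eq:succ-limit}) composed with $T(q_n^{-1})$, and you prove it by the same evaluation on coprojections. What you do differently is to prove strictly more along the way: your unit-induced section $t$ upgrades $v_{<n}$ from a map that merely coequalises $(\mu_{<n},(Tv)_{<n})$ to an honest split coequaliser, so that invertibility of $q_n$ falls out of uniqueness of coequalisers rather than a hand-verification; and you equip $Q_n$ with the algebra structure $\bar v=q_n^{-1}v_n$ already inside this lemma, so that invertibility of $q_{n+1}$ follows from the canonical presentation of $(Q_n,\bar v)$ plus transport along $T\theta$. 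The paper, by contrast, defers the algebra axioms for $(Q_n,q_n^{-1}v_n)$ to lemma(\ref{coeq-when-stable}), after stabilisation is already in hand. So your route is more conceptual and effectively pre-proves part of lemma(\ref{coeq-when-stable}), at the cost of establishing the full coequaliser property of $v_{<n}$, which the paper's leaner argument never needs.
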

\begin{proof}
Let us write $q_{m,n}:Q_m \to Q_n$,
\[ \begin{array}{lccr} {q'_{m,n}:TQ_m \to \colim_{m<n}TQ_m} &&& {q''_{m,n}:T^2Q_m \to \colim_{m<n}T^2Q_m} \end{array} \]
for the colimit cocones. First we contemplate the diagram
\[ \xygraph{!{0;(3,0):(0,.333)::} {\colim_{m{<}n} T^2Q_m}="tl" [r] {\colim_{m{<}n} TQ_m}="tm" [r] {\colim_{m{<}n} Q_m}="tr" [d] {Q_n}="br" [l] {TQ_n}="bm" [l] {T^2Q_n}="bl" [d] {T^2Q_{n{+}1}}="bbl" [r] {TQ_{n{+}1}}="bbm" [r] {Q_{n{+}1}}="bbr" [d] {Q_{n{+}2}}="bbbr"
"tl":@<1ex>"tm"^-{\mu_{<{n}}}:@<1ex>"tr"^-{v_{<{n}}} "tl":@<-1ex>"tm"_-{(Tv)_{<{n}}}:@<-1ex>@{<-}"tr"_-{\eta_{<{n}}} "bl":"bm"_-{\mu}:@{<-}"br"_-{\eta} "bbl":"bbm"_-{\mu}:@{<-}"bbr"_-{\eta} "tl":"bl"_{o_{n,2}} "tm":"bm"^{o_{n,1}} "tr":@{=}"br" "bl":"bbl"_{T^2q_n} "bm":"bbm"^{Tq_n} "br":"bbr"^{q_n}:"bbbr"^{q_{n{+}1}} "bl":"bbm"_-{Tv_n}:"bbbr"_-{v_{n{+}1}} "bm":"bbr"_-{v_n}} \]
and in general one has
\begin{equation}\label{eq:succ-limit} T(q_n)o_{n,1}(Tv)_{{<}n} = T(v_n)o_{n,2}. \end{equation}
To prove this note that from the definitions of $q_m$ and $q_n$ and the naturality of the $q_{m,n}$ in $m$, one may show easily that $v_nT(q_{m,n})=q_nq_{m{+}1,n}v_m$, and from this last equation and all the definitions it is easy to show that
\[ T(q_n)o_{n,1}(Tv)_{{<}n}q''_{m,n} = T(v_n)o_{n,2}q''_{m,n} \]
for all $m < n$ from which (\ref{eq:succ-limit}) follows.

Suppose that $o_{n,1}$ and $o_{n,2}$ are isomorphisms. Then define $q'_{n}:Q_{n{+}1} \to Q_n$ as the unique map such that $q'_nv_no_{n,1}=q_nv_{{<}n}$. It follows easily that $q'_n=q^{-1}_n$. From (\ref{eq:succ-limit}) and the invertibility of $o_{n,2}$ it follows easily that $v_n\mu=v_nT(q_n^{-1})T(v_n)$ and so there is a unique $q'_{n{+}1}$ such that $q'_{n{+}1}v_{n{+}2}=v_nT(q_n^{-1})$, from which it follows easily that $q'_{n{+}1}=q_{n{+}1}^{-1}$.
\end{proof}
\begin{lemma}\label{really-stable}
If the sequence stabilises at $n$ then it stabilises at any $m \geq n$, and moreover one has an isomorphism of sequences between the given sequence $(Q_m,q_m)$ and the following one:
\[ \xygraph{{Q_0}="p1" [r] {...}="p2" [r] {Q_n}="p3" [r] {Q_n}="p4" [r] {...}="p5" "p1":"p2"^-{q_0}:"p3":"p4"^-{\id}:"p5"^-{\id}} \]
\end{lemma}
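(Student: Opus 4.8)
The plan is to isolate the real content---propagation of stabilisation---from the bookkeeping. First I would prove, by transfinite induction on $m \geq n$, the statement that the sequence stabilises at $m$ (that is, both $q_m$ and $q_{m+1}$ are invertible); the base case $m=n$ is the hypothesis. Granting this, every $q_m$ with $m \geq n$ is invertible, and the second assertion follows formally: a sequence whose transition maps are eventually isomorphisms is isomorphic to the evidently constant one. Concretely I would set $\theta_m = \id_{Q_m}$ for $m \leq n$ and $\theta_m = q_{n,m}^{-1}$ for $m > n$, where $q_{n,m}:Q_n \to Q_m$ denotes the transition map of the original sequence (invertible, being a composite of the $q_k$ for $k \geq n$, or the corresponding colimit comparison at limit stages). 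Checking the naturality squares is immediate: for $m > n$ one has $q_{n,m+1}=q_m q_{n,m}$, so $\theta_{m+1}q_m = q_{n,m}^{-1}=\theta_m$, which is exactly commutativity against the identity transition map of the constant sequence, while the boundary square at $m=n$ reads $\theta_{n+1}q_n = q_n^{-1}q_n = \id$.

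The crux is the successor step of the induction, so I would treat it with care. Assume the sequence stabilises at $m$, i.e. $q_m$ and $q_{m+1}$ are invertible; I must show $q_{m+2}$ is invertible. By definition $v_{m+2}$ is the coequaliser of $T(q_{m+1})\mu_{Q_{m+1}}$ and $Tv_{m+1}$. Using the identity $q_{m+1}v_m = v_{m+1}T(q_m)$ and the invertibility of $T(q_m)$, I would rewrite $v_{m+1}=q_{m+1}v_m T(q_m)^{-1}$, so that both parallel maps acquire the common invertible left factor $T(q_{m+1})$. Cancelling it and precomposing with the isomorphism $T^2(q_m)$, the naturality of $\mu$ turns the coequaliser defining $v_{m+2}$ into the coequaliser of $T(q_m)\mu_{Q_m}$ and $Tv_m$---which is precisely $v_{m+1}$. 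Tracking the isomorphisms, one may therefore take $v_{m+2}=v_{m+1}T(q_{m+1})^{-1}$; then, using $q_{m+1}=v_{m+1}\eta_{Q_{m+1}}$ and the naturality of $\eta$, one computes $q_{m+2}=v_{m+2}\eta_{Q_{m+2}}=v_{m+1}\eta_{Q_{m+1}}q_{m+1}^{-1}=q_{m+1}q_{m+1}^{-1}=\id$. In particular $q_{m+2}$ is invertible, and together with $q_{m+1}$ this gives stabilisation at $m+1$.

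For the limit step, let $m$ be a limit ordinal and assume stabilisation at all $k$ with $n \leq k < m$; in particular $q_k$ is invertible for every such $k$. Since the tail $\{k : n \leq k < m\}$ is cofinal in $m$ and the transition maps there are isomorphisms, the colimit defining $Q_m$ is preserved by any functor---a colimit of a diagram of isomorphisms over a connected category is absolute---so both $T$ and $T^2$ preserve it, i.e. the obstruction maps $o_{m,1}$ and $o_{m,2}$ are invertible. Lemma(\ref{stable-limit}) then yields stabilisation at $m$, completing the induction.

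I expect the successor step to be the main obstacle, since it is where the coequaliser genuinely collapses: the argument is a cancellation against the two isomorphisms $T(q_m)$ and $T(q_{m+1})$ combined with the naturality of the monad structure maps. The bonus that the canonical choice gives $q_{m+2}=\id$ makes the final assembly of the isomorphism of sequences entirely transparent.
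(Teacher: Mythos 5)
Your proof is correct and takes essentially the same route as the paper's: the same transfinite induction, in which your successor-step cancellation of the isomorphisms $T(q_m)$ and $T(q_{m+1})$ against the pair defining $v_{m+2}$ is exactly the paper's key equation $v_{m+1}\mu = v_{m+1}T(q^{-1}_{m+1})T(v_{m+1})$ combined with the universal property of $v_{m+2}$, and in which the limit step identically invokes absoluteness of the colimit together with lemma(\ref{stable-limit}). The only cosmetic differences are that the paper builds the isomorphism of sequences (its components $i_m$) during the induction rather than assembling it afterwards as you do with $\theta_m = q_{n,m}^{-1}$, and that it phrases your ``one may take $v_{m+2}=v_{m+1}T(q_{m+1})^{-1}$'' more carefully by producing an explicit inverse $q'_{m+2}$ to the actual $q_{m+2}$ of the fixed construction --- a harmless distinction, since invertibility of $q_{m+2}$ is unaffected by replacing a coequaliser with an isomorphic one.
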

\begin{proof}
We show for $m \geq n$ that $q_m$ and $q_{m{+}1}$ are isomorphisms, and provide the component isomorphisms $i_m:Q_m \to Q_n$ of the required isomorphism of sequences, by transfinite induction on $m$. We define $i_m$ to be the identity when $m \leq n$. In the initial step $m=n$, $q_m$ and $q_{m{+}1}$ are isomorphisms by hypothesis and we define $i_{n{+}1}=q_n$. In the inductive step when $m \geq n$ is a non-limit ordinal, we must show that $q_{m{+}2}$ is an isomorphism and define $i_{m{+}2}=q_{m{+}1}i_{m{+}1}$. The key point is that
\begin{equation}\label{eq:key} v_{m{+}1}\mu = v_{m{+}1}T(q^{-1}_{m{+}1})T(v_{m{+}1}) \end{equation}
because with this equation in hand one defines $q'_{m{+}2}:Q_{m{+}3} \to Q_{m{+}2}$ as the unique morphism satisfying $q'_{m{+}2}v_{m{+}2}T(q_{m{+}1})=v_{m{+}1}$ using the universal property of $v_{m{+}2}$, and then it is routine to verify that $q'_{m{+}2}=q^{-1}_{m{+}2}$. So for the inductive step it remains to verify (\ref{eq:key}). But we have
\[ \begin{array}{rll} v_{m{+}1}{\mu}T^2(q_m) &=& v_{m{+}1}T(q_m)\mu = v_{m{+}1}T(v_m) = v_{m{+}1}T(q^{-1}_{m{+}1})T(q_{m{+}1}v_m) \\
&=& v_{m{+}1}T(q^{-1}_{m{+}1})T(v_{m{+}1})T^2(q_m) \end{array} \]
and so (\ref{eq:key}) follows since $q_m$ is an isomorphism. In the case where $m$ is a limit ordinal, we have stabilisation at $m'$ established whenever $n \leq m' < m$ by the induction hypothesis. Thus the colimit defining $Q_m$ is absolute (ie preserved by all functors) since its defining sequence from the position $n$ onwards consists only of isomorphisms. Thus $q_m$ and $q_{m{+}1}$ are isomorphisms by lemma(\ref{stable-limit}). By induction, the previously constructed $i_{m'}$'s provide a cocone on the defining diagram of $Q_m$ with vertex $Q_n$, thus one induces the isomorphism $i_m$ compatible with the earlier $i_{m'}$'s and defines $i_{m{+}1}=q_mi_m$.
\end{proof}
\begin{lemma}\label{coeq-when-stable}
If the sequence stabilises at $n$ then $(Q_n,q_n^{-1}v_n)$ is a $T$-algebra and
\[ q_{<{n}}:(B,b) \to (Q_n,q_n^{-1}v_n) \]
is the coequaliser of $f$ and $g$ in $V^T$.
\end{lemma}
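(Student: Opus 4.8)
Set $a_n := q_n^{-1}v_n$; the plan is to first check this is a $T$-algebra structure, then that $q_{<n}$ is a morphism of algebras coequalising $f$ and $g$, and finally to verify the universal property. The unit axiom is immediate from the relation $q_n = v_n\eta_{Q_n}$, valid at every stage of the construction: $a_n\eta_{Q_n} = q_n^{-1}v_n\eta_{Q_n} = q_n^{-1}q_n = \id$. For associativity, $a_nT(a_n) = a_n\mu_{Q_n}$ reduces, after cancelling the isomorphism $q_n^{-1}$ on the left, to $v_nT(q_n^{-1})T(v_n) = v_n\mu_{Q_n}$. I would obtain this directly from stabilisation: since $v_{n{+}1}$ coequalises $T(q_n)\mu_{Q_n}$ and $T(v_n)$ one has $v_{n{+}1}T(q_n)\mu_{Q_n} = v_{n{+}1}T(v_n)$, and substituting the relation $v_{n{+}1} = q_{n{+}1}v_nT(q_n^{-1})$ (obtained from $q_{n{+}1}v_n = v_{n{+}1}T(q_n)$ and the invertibility of $T(q_n)$) then cancelling the isomorphism $q_{n{+}1}$ yields the claim.

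Next I would show $q_{<n}$ underlies a morphism $(B,b) \to (Q_n,a_n)$ of $T$-algebras and coequalises $f$ and $g$ there. It coequalises them because $q_{<n}$ factors through $q_0$, which is the coequaliser of $f$ and $g$ in $V$. That it is an algebra map, namely $a_nT(q_{<n}) = q_{<n}b$, reduces after cancelling $q_n^{-1}$ to the identity $v_nT(q_{<n}) = q_{<n{+}1}b$, which I would prove by transfinite induction on $n$ from $v_0 = q_0b$ and $q_{m{+}1}v_m = v_{m{+}1}T(q_m)$; the successor step is $v_{m{+}1}T(q_mq_{<m}) = q_{m{+}1}v_mT(q_{<m}) = q_{m{+}1}q_{<m{+}1}b = q_{<m{+}2}b$, and limit stages follow from the description of $q_{<m}$ as a colimit leg. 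Thus $q_{<n}$ is a cocone under $f,g$ in $V^T$.

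Finally, for the universal property, given a $T$-algebra $(C,c)$ and an algebra map $h:(B,b)\to(C,c)$ with $hf = hg$, I would build maps $\ell_m:Q_m\to C$ by transfinite induction, starting from $\ell_0 = h$, subject to the cocone condition $\ell_{m{+}1}q_m = \ell_m$ and the recursion $\ell_{m{+}1}v_m = cT(\ell_m)$. At the initial stage $\ell_1$ is induced by the coequaliser $q_0$ from $hf = hg$; at a successor stage $cT(\ell_{m{+}1})$ coequalises $T(q_m)\mu_{Q_m}$ and $T(v_m)$ — the two composites agree by the associativity axiom $cT(c) = c\mu_C$ for $(C,c)$ and naturality of $\mu$ — so $\ell_{m{+}2}$ is induced by $v_{m{+}1}$, the cocone condition being recovered from $q_{m{+}1} = v_{m{+}1}\eta$ and $c\eta_C = \id$; limit stages use the colimit defining $Q_m$. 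Setting $\bar h := \ell_n$, the cocone condition gives $\bar h q_{<n} = h$, and the recursion at the stabilisation index gives $cT(\bar h) = \ell_{n{+}1}v_n = \ell_n q_n^{-1}v_n = \bar h a_n$, so $\bar h$ is a morphism of algebras; uniqueness follows since any competing algebra map $\bar h'$ forces $\bar h'q_{m,n}$ to obey the same recursion, hence to equal $\ell_m$, and so $\bar h' = \ell_n = \bar h$. I expect the main obstacle to be the transfinite bookkeeping in this last step: checking stage by stage that each candidate map coequalises the relevant defining pair and descends correctly through the limit stages, while reconciling the exceptional initial stage — where $q_0$, not $v_0$, is the operative coequaliser — with the uniform recursion used everywhere else.
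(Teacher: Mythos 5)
Your proposal is correct and follows essentially the same route as the paper's proof: the unit law from $q_n = v_n\eta$, associativity from the coequaliser relation for $v_{n{+}1}$ combined with $q_{n{+}1}v_n = v_{n{+}1}T(q_n)$, the algebra-map property of $q_{<n}$ via transfinite induction on the identity $q_{<n{+}1}b = v_nT(q_{<n})$, and the universal property via the recursion $h_{m{+}1}v_m = cT(h_m)$ with uniqueness obtained by restricting a competing algebra map along the maps $q_{m,n}$. The only place you are terser than the paper is at the limit stages of the two inductions --- which you rightly flag as the main bookkeeping burden --- where the paper's diagrams invoke the coequaliser relation defining $v_m$ at a limit ordinal together with $Tq_{<m} = o_{m,1}q'_{0,m}$ and the monad laws, rather than merely the colimit description of $q_{<m}$.
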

\begin{proof}
The unit law for $(Q_n,q_n^{-1}v_n)$ is immediate from the definition of $q_n$ and the associative law is the commutativity of the outside of the diagram on the left
\[ \xygraph{{\xybox{\xygraph{!{0;(1.5,0):(0,.7)::} {T^2Q_n}="tl" [r(2)] {TQ_n}="tr" [d] {Q_{n{+}1}}="mr" [d] {Q_n}="br" [l] {Q_{n{+}1}}="bm" [l] {TQ_n}="bl" [u] {TQ_{n{+}1}}="ml" "bm" [u(.75)] {Q_{n{+}2}}="mm" [u(.75)] {TQ_{n{+}1}}="tm"
"tl":"tr"^-{\mu}:"mr"^-{v_n}:"br"^-{q_n^{-1}}:@{<-}"bm"^-{q_n^{-1}}:@{<-}"bl"^-{v_n}:@{<-}"ml"^-{Tq_n^{-1}}:@{<-}"tl"^-{Tv_n}
"tr":"tm"^-{Tq_n}:"mm"_-{v_{n{+}1}}:"bm"_-{q_{n{+}1}^{-1}} "ml":"mm"_-{v_{n{+}1}}:"mr"^-{q_{n{+}1}^{-1}}}}}
[r(5)u(.1)]
{\xybox{\xygraph{{TB}="tl" [r(2)] {TQ_n}="tr" [d] {Q_{n{+}1}}="m" [d] {Q_n}="br" [l(2)] {B}="bl" "tl":"tr"^-{Tq_{{<}n}}:"m"^-{v_n}:"br"^-{q_n^{-1}}:@{<-}"bl"^-{q_{{<}n}}(:@{<-}"tl"^-{b},:"m"^-{q_{{<}n{+}1}}) "m":@{}"tl"|{(I)})}}}} \]
the regions of which evidently commute. The commutativity of the outside diagram on the right exhibits $q_{{<}n}$ as a $T$-algebra map, and this follows immediately from the commutativity of the region labelled (I).

The equational form of (I) says $q_{{<}n{+}1}b=v_nT(q_{{<}n})$ and we now proceed to prove this by transfinite induction on $n$. The case $n=0$ is just the statement $v_0=q_0b$. The inductive step comes out of the calculation
\[ q_{{<}n{+}2}b = q_{n{+}1}q_{{<}n{+}1}b = q_{n{+}1}v_nT(q_{{<}n}) = v_{n{+}1}T(q_nq_{{<}n}) = v_{n{+}1}T(q_{{<}n{+}1}) \]
and since $Tq_{{<}n}=o_{n,1}q'_{0,n}$. The case where $n$ is a limit ordinal is the commutativity of the outside of
\[ \xygraph{!{0;(1.4,0):} {TB}="tb" ([ur] {B}="b" [r(3)] {Q_{n{+}1}}="qnp1",[dr] {TB}="tb2" [r(3)] {\col \, TQ_m}="ctq" [ur] {TQ_n}="tqn")
"tb"(:"b"^-{b}:"qnp1"^-{q_{{<}n{+}1}},:"tb2"_-{\id}|-{}="mla":@/_{1pc}/"ctq"_-{q'_{0,n}}|-{}="mma":"tqn"_-{o_{n,1}}|-{}="mra":"qnp1"_-{v_n} "b":@{}"ctq"|*{TB}="m")
"m" (:@{}"b"|(.6)*{Q_n}="ptl",:@{}"qnp1"|(.6)*{TQ_n}="ptr",:@{}"mra"|(.6)*{\col \, TQ_m}="pmr",:@{}"mma"|(.6)*{\col \, T^2Q_m}="pb",:@{}"mla"|(.6)*{T^2B}="pml")
"b":"ptl"_(.6){q_{{<}n}}:"ptr"^-{\eta}:"qnp1"^(.4){v_n}
"tb":"pml"^-{\eta}:"m"^-{Tb}(:"ptr"^(.4){Tq_{{<}n}},:"pmr"^-{q'_{0,n}})
"pml":"pb"^-{q''_{0,n}}:"pmr"^-{(Tv)_{{<}n}}:"ptr"^-{o_{n,1}}
"pml":"tb2"^-{\mu} "pb":"ctq"^-{\mu_{{<}n}}} \]
the regions of which evidently commute. Thus $q_{{<}n}$ is indeed a $T$-algebra map.

To see that $q_{{<}n}$ is a coequaliser let $h:(B,b) \to (C,c)$ such that $hf=hg$. For each ordinal $m$ we construct $h_m : Q_m \to C$ such that $h_{m{+}1}v_m=cT(h_m)$ for all $m$ by transfinite induction on $m$. When $m=0$ we define $h_0=h$ and $h_1$ as unique such that $h_1q_0=h$. The equation $h_1v_0=cT(h)$ is easily verified. For the inductive step we note that the commutativity of
\[ \xygraph{{T^2Q_n}="ml" [ur] {TQ_n}="tl" [r(2)] {TC}="tr" [dr] {C}="mr" [dl] {TC}="br" [l(2)] {TQ_{n{+}1}}="bl" [ur] {T^2C}="m"
"ml":"tl"^-{\mu}:"tr"^-{Th_n}:"mr"^-{c}:@{<-}"br"^-{c}:@{<-}"bl"^-{Th_{n{+}1}}:@{<-}"ml"^-{Tv_n} "m"(:@{<-}"ml"_-{T^2h_n},:"tr"^-{\mu},:"br"^-{Tc})} \]
and the universal property of $v_{m{+}1}$ ensures there is a unique $h_{m{+}2}$ such that $h_{m{+}2}v_{m{+}1}=cT(h_{m{+}1})$. When $m$ is a limit ordinal it follows from all the definitions that
\[ cT(h_m)o_{m,1}\mu_{{<}m}q''_{m',m} = cT(h_m)o_{m,1}(Tv)_{{<}m}q''_{m',m} \]
for all $m' < m$, and so $cT(h_m)o_{m,1}\mu_{{<}m} = cT(h_m)o_{m,1}(Tv)_{{<}m}$ and so by the universal property of $v_{m{+}1}$ there is a unique $h_{m{+}1}$ such that $h_{m{+}1}v_m=cT(h_m)$. The sequence of $h_m$'s just constructed is clearly unique such that $h_0=h$ and $h_{m{+}1}v_m=cT(h_m)$. It follows immediately that $h_n$ is a $T$-algebra map, and that $h_nq_{{<}n}=h$. Conversely given $h':Q_n \to C$ such that $h'q_{{<}n}=h$, one constructs $h'_m:Q_m \to C$ as $h'_{m}=h'q_{m,n}$, and it follows easily that $h'_0=h$, $h'_{m{+}1}v_m=cT(h'_m)$ and $h'_n=h'$ whence $h'_m=h_m$ and so $h=h'$.
\end{proof}
\noindent From these results we recover the usual theorem on the construction of coequalisers of algebras of accessible monads.
\begin{theorem}\label{thm:coeq-Talg}
Let $V$ be a category with filtered colimits and coequalisers, $T$ be a monad on $V$ and
\[ \xygraph{!{0;(2,0):} {(A,a)}="l" [r] {(B,b)}="r" "l":@<-1ex>"r"_-{g} "l":@<1ex>"r"^-{f}} \]
be morphisms in $V^T$. If $T$ is $\lambda$-accessible for some regular cardinal $\lambda$, then $q_{<{n}}$ as constructed above is the coequaliser of $f$ and $g$ in $V^T$ for any ordinal $n$ such that $|n| \geq \lambda$.
\end{theorem}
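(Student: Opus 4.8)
The plan is to deduce the theorem from the three lemmas just established, the essential point being to locate a single ordinal at which the sequence stabilises. First I would observe that, since $\lambda$ is a cardinal, the hypothesis $|n| \geq \lambda$ is equivalent to $n \geq \lambda$ as ordinals: any ordinal strictly below $\lambda$ has cardinality strictly below $\lambda$, so $|n| \geq \lambda$ forces $n \geq \lambda$, while the converse is immediate. It therefore suffices to prove that the sequence stabilises at $\lambda$ itself, for then lemma(\ref{really-stable}) guarantees stabilisation at every $m \geq \lambda$, and lemma(\ref{coeq-when-stable}) identifies $q_{<{n}}:(B,b) \to (Q_n, q_n^{-1}v_n)$ as the coequaliser of $f$ and $g$ in $V^T$ at each such $n$.

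To establish stabilisation at $\lambda$ I would invoke lemma(\ref{stable-limit}). Since $\lambda$ is an infinite cardinal it is a limit ordinal, so that lemma applies once the obstruction maps $o_{\lambda,1}$ and $o_{\lambda,2}$ are seen to be invertible. The key observation is that the colimit defining $Q_\lambda$, namely the colimit of the chain $(Q_m,q_m)$ for $m < \lambda$, is a $\lambda$-filtered colimit: this is exactly the regularity of $\lambda$, which asserts that every subset of $\lambda$ of cardinality less than $\lambda$ is bounded in $\lambda$, so that $\lambda$ viewed as a poset is $\lambda$-directed. Because $T$ is $\lambda$-accessible it preserves this colimit, whence $o_{\lambda,1}$ is an isomorphism; and since a composite of $\lambda$-accessible functors again preserves $\lambda$-filtered colimits, $T^2$ preserves it too, whence $o_{\lambda,2}$ is an isomorphism.

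With both obstruction maps invertible, lemma(\ref{stable-limit}) yields stabilisation at $\lambda$, and the two reductions above then complete the argument. I expect the only genuine subtlety to be the filtered-colimit bookkeeping: confirming that the defining chain of $Q_\lambda$ is honestly $\lambda$-filtered, and that $\lambda$-accessibility of $T$ transfers to $T^2$. Everything substantive about coequalisers of algebras — the algebra structure on $Q_n$, the verification of the universal property, and the propagation of stabilisation through successor and limit stages — has already been absorbed into lemmas(\ref{stable-limit}), (\ref{really-stable}) and (\ref{coeq-when-stable}), so the role of the theorem is simply to package them under the single accessibility hypothesis.
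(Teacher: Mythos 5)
Your proposal is correct and follows essentially the same route as the paper: the paper's proof also takes the smallest ordinal $n$ with $|n| \geq \lambda$ (namely $\lambda$ itself, a limit ordinal), notes that $T$ and $T^2$ preserve the $\lambda$-filtered colimit defining $Q_\lambda$, and concludes via lemmas(\ref{stable-limit}) and (\ref{coeq-when-stable}), handling general $n$ by lemmas(\ref{really-stable}) and (\ref{coeq-when-stable}). Your write-up merely makes explicit the bookkeeping (regularity giving $\lambda$-directedness, and accessibility of $T$ passing to $T^2$) that the paper leaves implicit.
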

\begin{proof}
Take the smallest such ordinal $n$ -- it is necessarily a limit ordinal, and $T$ and $T^2$ by hypothesis preserve the defining colimit of $Q_n$. Thus by lemmas(\ref{stable-limit}) and (\ref{coeq-when-stable}) the result follows in this case, and in general by lemmas(\ref{really-stable}) and (\ref{coeq-when-stable}).
\end{proof}
Finally we mention the well-known special case when the above transfinite construction is particularly simple, that will be worth remembering.
\begin{proposition}\label{prop:simple-coeq}
Let $V$ be a category with filtered colimits and coequalisers, $T$ be a monad on $V$ and
\[ \xygraph{!{0;(2,0):} {(A,a)}="l" [r] {(B,b)}="r" "l":@<-1ex>"r"_-{g} "l":@<1ex>"r"^-{f}} \]
be morphisms in $V^T$. If $T$ and $T^2$ preserve the coequaliser of $f$ and $g$ in $V$, then the sequence $(Q_n,q_n)$ stabilises at $1$. Denoting by $w:TQ_1 \to Q_1$ the unique map such that $wT(q_0)=q_0b$, $q_0:(B,b) \to (Q_1,w)$ is the coequaliser of $f$ and $g$ in $V^T$.
\end{proposition}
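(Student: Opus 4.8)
The plan is to identify $(Q_1,w)$ directly as a $T$-algebra and then to check by hand that the transfinite sequence collapses after the first step, so that lemma(\ref{coeq-when-stable}) applies with $n=1$. First I would construct the algebra structure. Since $T$ preserves the coequaliser of $f$ and $g$, the map $T(q_0)$ is the coequaliser of $T(f)$ and $T(g)$; and since $f,g$ are algebra maps we have $q_0bT(f)=q_0fa=q_0ga=q_0bT(g)$, the middle equality because $q_0$ coequalises $f$ and $g$. Hence there is a unique $w:TQ_1\to Q_1$ with $wT(q_0)=q_0b=v_0$, which is exactly the $w$ of the statement. The unit law $w\eta_{Q_1}=\id$ follows by postcomposing the epimorphism $q_0$ (using naturality of $\eta$ and $b\eta_B=\id$), while the associativity law $w\mu_{Q_1}=wT(w)$ follows by precomposing the epimorphism $T^2(q_0)$: the two composites reduce to $q_0b\mu_B$ and $q_0bT(b)$, which agree by associativity of $(B,b)$. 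This is the only place the hypothesis on $T^2$ is used. Thus $(Q_1,w)$ is a $T$-algebra and $q_0:(B,b)\to(Q_1,w)$ is an algebra map by construction.

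Next I would show the sequence stabilises at $1$, i.e. that $q_1$ and $q_2$ are invertible. For $q_1$, recall from the inductive step of section(\ref{ssec:coequalisers}) that $v_1$ may equally be taken as the coequaliser of $\eta_{Q_1}v_0$ and $T(q_0)$. Since $wT(q_0)=v_0$, postcomposing gives $v_1\eta_{Q_1}w\,T(q_0)=v_1\eta_{Q_1}v_0=v_1T(q_0)$, and as $T(q_0)$ is epic this yields $v_1\eta_{Q_1}w=v_1$, that is $q_1w=v_1$ where $q_1=v_1\eta_{Q_1}$. Because $w$ coequalises the pair $T(q_0)\mu_B,\,T(v_0)$ defining $v_1$ (both composites equalling $q_0b\mu_B=q_0bT(b)$), there is a unique $\overline{w}:Q_2\to Q_1$ with $\overline{w}v_1=w$. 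One then checks $\overline{w}q_1=w\eta_{Q_1}=\id$ and $q_1\overline{w}v_1=q_1w=v_1$, so $q_1$ is invertible with inverse $\overline{w}$, and in particular $q_1^{-1}v_1=w$.

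The substantive step — and the one I expect to be the main obstacle — is the invertibility of $q_2$. Using $v_1=q_1w$, the pair defining $v_2$, namely $T(q_1)\mu_{Q_1}$ and $T(v_1)=T(q_1)T(w)$, factors through the isomorphism $T(q_1)$, so $v_2$ is, up to that isomorphism, a coequaliser of $\mu_{Q_1}$ and $T(w)$. But for the algebra $(Q_1,w)$ the fork with legs $\mu_{Q_1},T(w):T^2Q_1\to TQ_1$ and coequalising map $w$ is the canonical presentation, which is a split — hence absolute — coequaliser with splittings $\eta_{Q_1}$ and $\eta_{TQ_1}$; thus we may take its coequaliser to be $w$ itself. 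Choosing $Q_3=Q_1$ accordingly gives $v_2=wT(q_1)^{-1}$, whence $q_2=v_2\eta_{Q_2}=wT(q_1^{-1})\eta_{Q_2}=w\eta_{Q_1}q_1^{-1}=q_1^{-1}$, an isomorphism. With $q_1$ and $q_2$ both invertible the sequence stabilises at $1$, so lemma(\ref{coeq-when-stable}) applied with $n=1$ shows that $(Q_1,q_1^{-1}v_1)$ is a $T$-algebra and that $q_{<1}=q_0$ is the coequaliser of $f$ and $g$ in $V^T$. Finally $q_1^{-1}v_1=\overline{w}v_1=w$, so the algebra structure produced is precisely the $w$ of the statement, which completes the proof.
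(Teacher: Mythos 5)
Your proof is correct and takes essentially the same route as the paper's: establish that $(Q_1,w)$ is a $T$-algebra using the epimorphisms $q_0$ and $T^2(q_0)$, show by hand that the sequence stabilises at $1$, and conclude via lemma(\ref{coeq-when-stable}). The only differences are tactical: where you build $q_1^{-1}=\overline{w}$ explicitly and then compute $q_2=q_1^{-1}$ from the split (canonical presentation) coequaliser of $(Q_1,w)$ together with the invertibility of $T(q_1)$, the paper gets $q_1$ as the canonical isomorphism between two coequalisers of the pair defining $v_1$ (using that same split coequaliser precomposed with the epimorphism $T^2(q_0)$) and handles $q_2$ by repeating that argument with $q_1q_0$ in place of $q_0$.
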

\begin{proof}
Refer to the diagram in $V$ above that describes the first few steps of the construction of $(Q_n,q_n)$. Since $q_0$ and $T^2q_0$ are epimorphisms, the $T$-algebra axioms for $(Q_1,w)$ follow from those for $(B,b)$, and $q_0$ is a $T$-algebra map by definition. Thus $w$ is the coequaliser in $V$ of $\mu_{Q_1}$ and $Tw$, and since $T^2q_0$ is an epimorphism it is also the coequaliser of $\mu_{Q_1}T^2(q_0)$ and $T(w)T^2(q_0)=Tv_0$, but so is $v_1$, and so $q_1$ is the canonical isomorphism between them. To see that $q_2$ is also invertible, apply the same argument with the composite $q_1q_0$ in place of $q_0$. The result now follows by lemma(\ref{coeq-when-stable}).
\end{proof}
%

\subsection{Monads induced by monad morphisms}\label{ssec:Dubuc}
Suppose that $(M,\eta^M,\mu^M)$ and $(S,\eta^S,\mu^S)$ are monads on a category $V$, and $\phi:M{\rightarrow}S$ is a morphism of monads. Then one has an obvious forgetful functor $\phi^*:V^S{\rightarrow}V^M$ and one can ask whether $\phi^*$ has a left adjoint which, when it exists, we denote as $\phi_!$. By the Dubuc adjoint triangle theorem \cite{Dubuc}, one may compute the value of $\phi_!$ at an $M$-algebra $(X,x:MX{\rightarrow}X)$ as a reflexive coequaliser
\[ \xygraph{!{0;(3,0):}
{(SMX,\mu^S_{MX})}="l" [r] {(SX,\mu^S_X)}="m" [r] {\phi_!(X,x)}="r"
"l":@<2ex>"m"^-{\mu^S_XS(\phi_X)}:"l"|-{S\eta^M_X}:@<-2ex>"m"_-{Sx}:"r"^-{q_{(X,x)}}} \]
in $V^S$, when this coequaliser exists. Thus by theorem(\ref{thm:coeq-Talg}) for the existence of $\phi_!$ it suffices that $V$ admit filtered colimits and coequalisers, and $S$ be accessible. With the aid of section(\ref{ssec:coequalisers}) we shall now give an explicit description of the composite $U^S\phi_!$ under these hypotheses. To do so we shall construct morphisms
\[ \begin{array}{lccr} {v_{n,X,x} : SQ_n(X,x) \rightarrow Q_{n{+}1}(X,x)} &&& {q_{n,X,x}:Q_n(X,x) \to Q_{n{+}1}(X,x)} \end{array} \]
\[ \begin{array}{c} {q_{{<}n,X,x}:SX \to Q_n(X,x)}  \end{array} \]
starting with $Q_0(X,x)=SX$ by transfinite induction on $n$.
\\ \\
{\bf Initial step}. Define $q_{{<}0}$ to be the identity, $q_0$ to be the coequaliser of $\mu^S(S\phi)$ and $Sx$, $q_{<{1}}=q_0$ and $v_0=q_0b$. Note also that $q_0=v_0\eta^S$.
\\ \\
{\bf Inductive step}. Assuming that $v_n$, $q_n$ and $q_{<{n{+}1}}$ are given, we define $v_{n{+}1}$ to be the coequaliser of $S(q_n)(\mu^SQ_n)$ and $Sv_n$, $q_{n{+}1}=v_{n{+}1}(\eta^SQ_{n{+}1})$ and $q_{<{n{+}2}}=q_{n{+}1}q_{<{n{+}1}}$.
\\ \\
{\bf Limit step}. Define $Q_n(X,x)$ as the colimit of the sequence given by the objects $Q_m(X,x)$ and morphisms $q_m$ for $m < n$, and $q_{<{n}}$ for the component of the universal cocone at $m=0$.
\[ \xygraph{!{0;(3,0):(0,.333)::} {\colim_{m{<}n} S^2Q_m}="tl" [r] {\colim_{m{<}n} SQ_m}="tm" [r] {\colim_{m{<}n} Q_m}="tr" [d] {Q_n}="br" [l] {SQ_n}="bm" [l] {S^2Q_n}="bl" "tl":@<1ex>"tm"^-{\mu_{<{n}}}:@<1ex>"tr"^-{v_{<{n}}} "tl":@<-1ex>"tm"_-{(Sv)_{<{n}}}:@<-1ex>@{<-}"tr"_-{\eta_{<{n}}} "bl":"bm"_-{\mu}:@{<-}"br"_-{\eta} "tl":"bl"_{o_{n,2}} "tm":"bm"^{o_{n,1}} "tr":@{=}"br"} \]
We write $o_{n,1}$ and $o_{n,2}$ for the obstruction maps measuring the extent to which $S$ and $S^2$ preserve the colimit defining $Q_n(X,x)$. We write $\mu^S_{<{n}}$, $(Sv)_{<{n}}$, $v_{<{n}}$ and $\eta^S_{<{n}}$ for the maps induced by the $\mu^SQ_m$, $Sv_m$, $v_m$ and $\eta^SQ_m$ for $m < n$ respectively. Define $v_n$ as the coequaliser of $o_{n,1}\mu_{<{n}}$ and $o_{n,1}(Sv)_{<{n}}$, $q_n=v_n({\eta^S}Q_n)$ and $q_{<{n{+}1}}=q_nq_{<{n}}$.
\\ \\
Instantiating theorem(\ref{thm:coeq-Talg}) to the present situation gives
\begin{corollary}\label{cor:explicit-phi-shreik}
Suppose that $V$ admits filtered colimits and coequalisers, $M$ and $S$ are monads on $V$, $\phi:M{\rightarrow}S$ is a morphism of monads, and $(X,x)$ is an $M$-algebra. If moreover $S$ is $\lambda$-accessible for some regular cardinal $\lambda$, then for any ordinal $n$ such that $|n| \geq \lambda$
one may take
\[ \begin{array}{lccr} {\phi_!(X,x)=(Q_n(X,x),q_n^{-1}v_n)} &&& {q_{<n} : (SX,\mu_X) \to (Q_n(X,x),q_n^{-1}v_n)} \end{array} \]
as an explicit definition of $\phi_!(X,x)$ and the associated coequalising map in $V^S$ coming from the Dubuc adjoint triangle theorem.
\end{corollary}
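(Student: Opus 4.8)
The plan is to recognise this corollary as the literal instantiation of theorem(\ref{thm:coeq-Talg}) at the reflexive pair supplied by the Dubuc adjoint triangle theorem, so that almost all of the work has already been done. First I would recall that Dubuc's theorem expresses $\phi_!(X,x)$, when it exists, as the coequaliser in $V^S$ of the parallel pair
\[ \mu^S_X S(\phi_X), \quad Sx \,:\, (SMX,\mu^S_{MX}) \longrightarrow (SX,\mu^S_X), \]
these two maps sharing the common section $S\eta^M_X$. Setting $(A,a)=(SMX,\mu^S_{MX})$, $(B,b)=(SX,\mu^S_X)$, $f=\mu^S_X S(\phi_X)$ and $g=Sx$ exhibits this as exactly a reflexive pair in $V^S$ of the form treated in section(\ref{ssec:coequalisers}), with the monad $T$ there taken to be $S$.

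Next I would check that the transfinite recursion written out above in section(\ref{ssec:Dubuc}) agrees step for step with the general construction of section(\ref{ssec:coequalisers}) applied to this pair. This is immediate from comparing the two: the initial step begins with $Q_0(X,x)=SX=B$ and takes $q_0$ to be the coequaliser of $f=\mu^S(S\phi)$ and $g=Sx$, while the inductive and limit steps are the identical recursions defining $v_n$, $q_n$ and $q_{<n}$. Since $V$ has filtered colimits and coequalisers, every stage is well-defined.

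Finally, since $S$ is $\lambda$-accessible, theorem(\ref{thm:coeq-Talg}) applies directly: for any ordinal $n$ with $|n|\geq\lambda$ the sequence stabilises and $q_{<n}:(B,b)\to(Q_n(X,x),q_n^{-1}v_n)$ is the coequaliser of $f$ and $g$ in $V^S$, the $S$-algebra structure $q_n^{-1}v_n$ being the one produced by lemma(\ref{coeq-when-stable}). By Dubuc's theorem this coequaliser is precisely $\phi_!(X,x)$, with coequalising map $q_{<n}$, which is the assertion. I anticipate no real obstacle here: the construction and its convergence have already been established, and the only point requiring care is the routine verification that the Dubuc pair is reflexive and lies in $V^S$, which is exactly what licenses the application of theorem(\ref{thm:coeq-Talg}).
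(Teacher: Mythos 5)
Your proposal is correct and matches the paper's own argument exactly: the paper presents the transfinite recursion of section(\ref{ssec:Dubuc}) as the construction of section(\ref{ssec:coequalisers}) applied to the Dubuc reflexive pair $\mu^S_XS(\phi_X), Sx : (SMX,\mu^S_{MX}) \to (SX,\mu^S_X)$ in $V^S$, and then obtains the corollary by instantiating theorem(\ref{thm:coeq-Talg}), just as you do. The only remark worth adding is that theorem(\ref{thm:coeq-Talg}) does not even need reflexivity of the pair, so that verification, while harmless, is not actually load-bearing.
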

\noindent and instantiating proposition(\ref{prop:simple-coeq}) to the present situation gives
\begin{corollary}\label{cor:phi-shreik-simple}
Suppose that under the hypotheses of corollary(\ref{cor:explicit-phi-shreik}) that $S$ and $S^2$ preserve the coequaliser of $\mu_{X}^{S}S(\phi_X)$ and $Sx$ in $V$. Then the sequence $(Q_n,q_n)$ stabilises at $1$, and writing $w:SQ_1 \to Q_1$ for the unique map such that $wS(q_0)=q_0\mu_{X}^{S}$, one may take
\[ \begin{array}{lccr} {\phi_!(X,x)=(Q_1(X,x),w)} &&& {q_0 : (SX,\mu_X) \to (Q_1(X,x),w)} \end{array} \]
as an explicit definition of $\phi_!(X,x)$ and the associated coequalising map in $V^S$.
\end{corollary}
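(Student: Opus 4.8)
The plan is to observe that this corollary is nothing more than the instantiation of Proposition~(\ref{prop:simple-coeq}) to the parallel pair arising from the Dubuc adjoint triangle theorem, so that the work reduces to checking that the data match up. Recall from the beginning of section~(\ref{ssec:Dubuc}) that for a monad morphism $\phi:M \to S$ the value $\phi_!(X,x)$ is computed, when it exists, as the reflexive coequaliser in $V^S$ of the pair
\[ \mu^S_X S(\phi_X), \quad Sx : (SMX,\mu^S_{MX}) \rightarrow (SX,\mu^S_X), \]
whose common section is $S\eta^M_X$, and that the transfinite construction of section~(\ref{ssec:coequalisers}) applied to $T=S$ and this pair computes precisely this coequaliser; this is the content of Corollary~(\ref{cor:explicit-phi-shreik}). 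The hypotheses carried over from that corollary in particular guarantee that $\phi_!$ exists.

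First I would fix the dictionary $T=S$, $(B,b)=(SX,\mu^S_X)$, $(A,a)=(SMX,\mu^S_{MX})$, and let $f=\mu^S_X S(\phi_X)$ and $g=Sx$ be the two $S$-algebra maps above. Under this dictionary the hypothesis of the present corollary---that $S$ and $S^2$ preserve the coequaliser of $\mu^S_X S(\phi_X)$ and $Sx$ in $V$---is exactly the hypothesis of Proposition~(\ref{prop:simple-coeq}). Hence that proposition applies verbatim: the sequence $(Q_n,q_n)$ stabilises at $1$, and $q_0:(SX,\mu^S_X) \to (Q_1,w)$ is the coequaliser of $f$ and $g$ in $V^S$, where $w:SQ_1 \to Q_1$ is the unique map with $wS(q_0)=q_0 b = q_0\mu^S_X$; the identification $b=\mu^S_X$ is the only place where the specific form of the base algebra enters.

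Second, I would identify this coequaliser with $\phi_!(X,x)$. Since the transfinite construction computes the $V^S$-coequaliser of $f$ and $g$, and since by Dubuc this coequaliser is exactly $\phi_!(X,x)$, the $S$-algebra $(Q_1,w)$ together with the coequalising map $q_0$ furnishes the asserted explicit description. For consistency with Corollary~(\ref{cor:explicit-phi-shreik}) I would also note that Proposition~(\ref{prop:simple-coeq}) identifies $w$ with $q_1^{-1}v_1$, so that the present formula agrees with $\phi_!(X,x)=(Q_n,q_n^{-1}v_n)$ once one takes $n=1$ at the point of stabilisation.

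The only genuinely substantive point---and hence the one I expect to require the most care---is the bookkeeping that the two preservation hypotheses truly coincide and that the reflexivity of the Dubuc coequaliser is harmless here: Proposition~(\ref{prop:simple-coeq}) imposes no reflexivity condition, so no extra argument is needed on that account, but one should confirm that the pair $(f,g)$ and base algebra $(SX,\mu^S_X)$ really are those to which the general transfinite machine of section~(\ref{ssec:coequalisers}) was applied in the monad-morphism setting. Beyond this matching of data the corollary carries no new content, being a direct specialisation.
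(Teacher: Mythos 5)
Your proposal is correct and matches the paper's own treatment exactly: the paper proves this corollary simply by instantiating proposition(\ref{prop:simple-coeq}) with $T=S$, $(A,a)=(SMX,\mu^S_{MX})$, $(B,b)=(SX,\mu^S_X)$, $f=\mu^S_XS(\phi_X)$, $g=Sx$, and identifying the resulting $V^S$-coequaliser with $\phi_!(X,x)$ via the Dubuc adjoint triangle theorem. Your additional remarks (that $b=\mu^S_X$ is where the specific algebra enters, that reflexivity plays no role in the proposition, and that $w=q_1^{-1}v_1$ reconciles the formula with corollary(\ref{cor:explicit-phi-shreik})) are accurate bookkeeping consistent with the paper.
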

\begin{remark}
Here is a degenerate situation in which corollary(\ref{cor:phi-shreik-simple}) applies. Since $U^M\phi^*=U^S$ we have $\phi_!F^M \iso F^U$, but another way to view this isomorphism as arising is to apply the corollary in the case where $(X,x)$ is a free $M$-algebra, say $(X,x)=(MZ,\mu^M_Z)$, for in this case one has the dotted arrows in
\[ \xygraph{!{0;(2,0):} {SM^2Z}="l" [r] {SMZ}="m" [r] {SZ}="r" "l":@<1.5ex>"m"^-{(\mu^SM)(S{\phi}M)} "l":@<-1.5ex>"m"^-{S\mu^M} "l":@{<.}@<-3.5ex>"m"_-{SM\eta^M} "m":"r"^-{\mu^SS(\phi)} "m":@{<.}@<-2ex>"r"_-{S\eta^M}} \]
exhibiting $\mu_Z^SS(\phi_Z)$ as a split coequaliser, and thus absolute.
\end{remark}
Let us denote by $(T,\eta^T,\mu^T)$ the monad on $V^M$ induced by the adjunction $\phi_! \ladj \phi^*$. We now give an explicit description of this monad. Let $(X,x)$ be in $V^M$, suppose $S$ is $\lambda$-accessible and fix an ordinal $n$ such that $|n| \geq \lambda$. Then by corollary(\ref{cor:explicit-phi-shreik}) one may take
\[ \begin{array}{lccr} {T(X,x) = (Q_n(X,x),a(X,x)\phi_{Q_n(X,x)})} &&& {a(X,x) = (q^{-1}_n)_{Q_n(X,x)} (v_{n})_{Q_n(X,x)}} \end{array} \]
as the definition of the endofunctor $T$. Note that $(Q_n(X,x),a(X,x))$ is just a more refined notation for $\phi_!(X,x)$. Referring to the diagram
\[ \xygraph{!{0;(2,0):(0,.6)::} {M^2X}="tl" [r] {MX}="tm" [r] {X}="tr" [d] {Q_n}="br" [l] {SX}="bm" [l] {SMX}="bl" "tl":@<-1ex>"tm"_-{Mx} "tl":@<1ex>"tm"^-{\mu^M_X}:"tr"^-{x} "bl":@<-1ex>"bm"_-{Sx} "bl":@<1ex>"bm"^-{\mu^S_XS(\phi_X)}:"br"_-{q_{{<}n}} "tl":"bl"_{\phi_{MX}} "tm":"bm"^{\phi_X} "tr":@{.>}"br"^{\eta^T_{(X,x)}}} \]
one may define the underlying map in $V$ of $\eta^T_{(X,x)}$ as the unique map making the square on the right commute. This makes sense since the top row is a coequaliser in $V$. Via the evident $M$-algebra structures on each of the objects in this diagram, one may in fact interpret the whole diagram in $V^M$ with the top row now being the canonical presentation coequaliser for $(X,x)$, and this is why $\eta^T_{(X,x)}$ is an $M$-algebra map. The proof that $\eta^T_{(X,x)}$ possesses the universal property of the unit of $\phi_! \ladj \phi^*$ is straight forward and left to the reader. As for $\mu^T_{(X,x)}$, it is induced from the following situation in $V^S$:
\[ \xygraph{{(SMQ_n,\mu^S)}="l" [r(2.5)] {(SQ_n,\mu^S)}="m" [r(4)] {(Q_n(Q_n,a\phi),a(Q_n,a\phi))}="r" [dl] {(Q_n(X,x),a(X,x))}="b" "l":@<-1ex>"m"_-{\mu^SS(\phi)} "l":@<1ex>"m"^-{S(a(X,x)\phi)}(:"r"^-{(q_{{<}n})_{(Q_n,a\phi)}}:@{.>}"b"^(.35){\mu^T_{(X,x)}},:"b"_{a(X,x)})} \]
Since by definition $\mu^T_{(X,x)}$ underlies an $S$-algebra map, to finish the proof that our definition really does describe the multiplication of $T$, it suffices by the universal property of $\eta^T$ to show that $\mu^T_{(X,x)}\eta^T_{T(X,x)}$ is the identity, and this is easily achieved using the defining diagrams of $\mu^T$ and $\eta^T$ together.

Let us now describe $\eta^T$ and (especially) $\mu^T$ in terms of the transfinite data that gives $Q_n(X,x)$. To do so we shall for each ordinal $m$ provide
\[ \begin{array}{lccr} {\eta^{(m{+}1)}_{(X,x)} : X \to Q_{m{+}1}(X,x)} &&& {\mu^{(m)}_{(X,x)} : Q_m(Q_n(X,x),a(X,x)\phi) \to Q_n(X,x)} \end{array} \]
and $\mu^{(m{+}1)}_{(X,x)}$ in $V$ such that $\mu^{(m{+}1)}v_m=a(X,x)S(\mu^{(m)})$, by transfinite induction on $m$.
\\ \\
{\bf Initial step}. Define $\mu^{(0)}_{(X,x)}$ to be the identity, and $\eta^{(1)}_{(X,x)}$ and $\mu^{(1)}_{(X,x)}$ as the unique morphisms such that
\[ \begin{array}{lccr} {\eta^{(1)}_{(X,x)}x=(q_0)_{(X,x)}\phi_X} &&& {\mu^{(1)}_{(X,x)}(q_0)_{(Q_n,a\phi)}=a(X,x)} \end{array} \]
by the universal properties of $x$ and $q_0$ (as the evident coequalisers) respectively.
\\ \\
{\bf Inductive step}. Define $\eta^{(m{+}2)}=q_{m{+}1}\eta^{(m{+}1)}$ and $\mu^{(m{+}2)}$ as the unique map satisfying $\mu^{(m{+}2)}v_{m{+}1}=a(X,x)S(\mu^{(m{+}1)})$ using the universal property of $v_{m{+}1}$ as a coequaliser.
\\ \\
{\bf Limit step}. When $m$ is a limit ordinal define $\eta^{(m)}_{(X,x)}$ and $\mu^{(m)}_{(X,x)}$ as the maps induced by the $\eta^{(r)}$ and $\mu^{(r)}$ for $r<m$ and the universal property of $Q_m(X,x)$ as the colimit of the sequence of the $Q_r$ for $r<m$. Then define $\eta^{(m{+}1)}=q_m\eta^{(m)}$ and $\mu^{(m{+}2)}$ as the unique map satisfying $\mu^{(m{+}2)}v_{m{+}1}=a(X,x)S(\mu^{(m{+}1)})$ using the universal property of $v_{m{+}1}$ as a coequaliser.
\\ \\
The fact that the induction just given was obtained by unpacking the descriptions of $\eta^T$ and $\mu^T$ of the previous paragraph in terms of the transfinite construction of the endofunctor $T$ (ie the $Q_m(X,x)$), is expressed by
\begin{corollary}\label{cor:induced-monad-very-explicit}
Suppose that $V$ admits filtered colimits and coequalisers, $M$ and $S$ are monads on $V$, $\phi:M{\rightarrow}S$ is a morphism of monads, and $(X,x)$ is an $M$-algebra. If moreover $S$ is $\lambda$-accessible for some regular cardinal $\lambda$, then for any ordinal $n$ such that $|n| \geq \lambda$
one may take
\[ \begin{array}{lcccr} {T(X,x) = (Q_n(X,x),a(X,x)\phi_{Q_n(X,x)})} && {\eta^T_{(X,x)}=\eta^{(n)}_{(X,x)}} && {\mu^T_{(X,x)}=\mu^{(n)}_{(X,x)}} \end{array} \]
as constructed above as an explicit description underlying endofunctor, unit and multiplication of the monad generated by the adjunction $\phi_! \ladj \phi^*$.
\end{corollary}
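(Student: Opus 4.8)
The plan is to show that the explicitly constructed transfinite families $\eta^{(n)}$ and $\mu^{(n)}$ coincide with the unit $\eta^T$ and multiplication $\mu^T$ of the monad $T$ induced by $\phi_! \ladj \phi^*$, whose abstract characterizations were given in the two paragraphs preceding the statement. The identification of the underlying endofunctor $T(X,x)=(Q_n(X,x),a(X,x)\phi_{Q_n(X,x)})$ is already recorded in corollary(\ref{cor:explicit-phi-shreik}), so what remains is purely the comparison of the structure maps. Since $\eta^T_{(X,x)}$ and $\mu^T_{(X,x)}$ were each pinned down by a universal property, it suffices to verify that $\eta^{(n)}_{(X,x)}$ and $\mu^{(n)}_{(X,x)}$ satisfy the very same defining equations; uniqueness then forces the two identifications.

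For the unit, recall that $\eta^T_{(X,x)}$ is the unique map satisfying $\eta^T_{(X,x)}\,x = q_{<n}\,\phi_X$, this being well posed because $x$ is the canonical presentation coequaliser of $Mx$ and $\mu^M_X$. I would prove by transfinite induction on $r \geq 1$ that $\eta^{(r)}_{(X,x)}\,x = q_{<r}\,\phi_X$. The base case $r=1$ is exactly the defining equation $\eta^{(1)}x = q_0\phi_X$; the successor case is immediate from $\eta^{(r+1)} = q_r\eta^{(r)}$ together with $q_{<r+1} = q_r q_{<r}$; and the limit case follows because the relations $q_r\eta^{(r)}=\eta^{(r+1)}$ make the $\eta^{(r)}$ compatible with the colimit injections, so that $\eta^{(m)} = q_{r,m}\eta^{(r)}$ and hence $\eta^{(m)}x = q_{r,m}q_{<r}\phi_X = q_{<m}\phi_X$, since $q_{<m}$ is the stage-$0$ injection. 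Taking $r=n$ and using that $x$ is epic yields $\eta^{(n)} = \eta^T_{(X,x)}$.

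For the multiplication, recall that $\mu^T_{(X,x)}$ is the unique $S$-algebra map with $\mu^T_{(X,x)}\,(q_{<n})_{(Q_n,a\phi)} = a(X,x)$, extracted from the coequaliser in $V^S$ that computes $\phi_!$ at the algebra $(Q_n(X,x),a(X,x)\phi)$. The strategy is again a transfinite induction, this time establishing the invariant $\mu^{(r)}\,(q_{<r})_{(Q_n,a\phi)} = a(X,x)$ for all $r \geq 1$. The engine of the argument is the compatibility relation $\mu^{(r+1)}q_r = \mu^{(r)}$, which I would derive from the recursion $\mu^{(r+1)}v_r = a(X,x)S(\mu^{(r)})$ using $q_r = v_r(\eta^S Q_r)$, naturality of $\eta^S$, and the unit law $a(X,x)\eta^S = \id$ for the $S$-algebra $(Q_n,a(X,x))$. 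The base case is the defining equation $\mu^{(1)}(q_0)_{(Q_n,a\phi)} = a(X,x)$; at successors the compatibility relation plus $q_{<r+1}=q_r q_{<r}$ propagates the invariant; and at limit ordinals the same compatibility exhibits the $\mu^{(r)}$ as a cocone under $(Q_r(Q_n,a\phi),q_r)$, so the induced $\mu^{(m)}$ restricts along the stage-$0$ injection $(q_{<m})_{(Q_n,a\phi)}$ to $a(X,x)$. Taking $r=n$ and invoking the universal property of the transfinite coequaliser $(q_{<n})_{(Q_n,a\phi)}$ identifies $\mu^{(n)}$ with $\mu^T_{(X,x)}$.

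The main obstacle is the multiplication step. The abstract $\mu^T$ is specified by a single coequaliser ``at the top'' in $V^S$, whereas the explicit $\mu^{(n)}$ is assembled one stage at a time through the universal properties of the $v_r$; reconciling the two amounts to checking that each stagewise map coheres with the transfinite composite $q_{<n}$ and that the colimit-induced maps at limit ordinals are both well defined and compatible with the ambient $S$-algebra structures. This is exactly the bookkeeping that the explicit inductive clauses were designed to encode, so once the invariant $\mu^{(r)}(q_{<r})_{(Q_n,a\phi)} = a(X,x)$ is in place, each case reduces to a routine diagram chase against the definitions of $v_r$, $q_r$ and the unit and associativity laws, with no genuinely new idea beyond those already deployed in the construction of $\phi_!$ in section(\ref{ssec:coequalisers}).
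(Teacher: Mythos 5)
Your strategy is the one the paper itself intends: it offers no separate proof of this corollary, asserting it as the unpacking of the universal-property descriptions of $\eta^T$ and $\mu^T$ given in the two preceding paragraphs, and your transfinite inductions establishing $\eta^{(r)}x=q_{<r}\phi_X$, the compatibility relation $\mu^{(r+1)}q_r=\mu^{(r)}$, and the invariant $\mu^{(r)}(q_{<r})_{(Q_n,a\phi)}=a(X,x)$ are all correct. In particular the unit half is complete as you give it, since $x$ is a (split) epimorphism, so the equation $\eta^{(n)}x=q_{<n}\phi_X$ determines $\eta^{(n)}=\eta^T_{(X,x)}$ among \emph{all} maps of $V$.

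The multiplication half, however, has a genuine gap at its final step. The map $\mu^T_{(X,x)}$ is induced by the universal property of $(q_{<n})_{(Q_n,a\phi)}$ as a coequaliser in $V^S$, so that universal property gives uniqueness only among \emph{$S$-algebra maps} $(Q_n(Q_n,a\phi),a(Q_n,a\phi))\to(Q_n(X,x),a(X,x))$. It gives no uniqueness among arbitrary maps of $V$: this coequaliser is not a coequaliser in $V$ (the whole point of the transfinite construction is that $U^S$ fails to preserve such coequalisers), and $q_{<n}$ is a transfinite composite of maps $q_{r+1}=v_{r+1}(\eta^S Q_{r+1})$, which need not be epimorphisms in $V$. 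Hence the equation $\mu^{(n)}(q_{<n})_{(Q_n,a\phi)}=a(X,x)$ alone does not identify $\mu^{(n)}$ with $\mu^T_{(X,x)}$; you must also verify that $\mu^{(n)}$ underlies an $S$-algebra map, and this is precisely where the hypothesis $|n|\geq\lambda$ is used beyond corollary(\ref{cor:explicit-phi-shreik}). Fortunately the verification is one line from facts you already have. Since $|n|\geq\lambda$, the sequence for the $M$-algebra $(Q_n(X,x),a(X,x)\phi)$ stabilises at $n$ (lemmas(\ref{stable-limit}) and (\ref{really-stable})), so $(q_n)_{(Q_n,a\phi)}$ is invertible and $a(Q_n,a\phi)=(q_n)^{-1}(v_n)$, all maps here being those of the construction applied to $(Q_n(X,x),a(X,x)\phi)$; your compatibility relation at $r=n$ then gives
\[ \mu^{(n)}a(Q_n,a\phi)=\mu^{(n+1)}(q_n)(q_n)^{-1}(v_n)=\mu^{(n+1)}(v_n)=a(X,x)S(\mu^{(n)}), \]
which is exactly the algebra-map condition. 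Alternatively, and this is the paper's implicit route: for $m\geq 1$ your $\mu^{(m)}$ satisfy the same recursion (initial condition $\mu^{(1)}q_0=a(X,x)$ and $\mu^{(m+1)}v_m=a(X,x)S(\mu^{(m)})$) as the maps $h_m$ constructed in the proof of lemma(\ref{coeq-when-stable}) with $h=a(X,x)$, hence coincide with them by the uniqueness of that recursion; that lemma already asserts that $h_n$ is the $S$-algebra map induced by the universal property, which is $\mu^T_{(X,x)}$.
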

\noindent and the simplification coming from proposition(\ref{prop:simple-coeq}) gives
\begin{corollary}\label{cor:vexp-simple}
Under the hypotheses of corollary(\ref{cor:induced-monad-very-explicit}), if for $(X,x) \in V^M$, $S$ and $S^2$ preserve the coequaliser of $\mu_{X}^{S}S(\phi_X)$ and $Sx$ in $V$, then one may take
\[ \begin{array}{lcccr} {T(X,x) = (Q_1(X,x),w\phi)} && {\eta^T_{(X,x)}=\eta^{(1)}_{(X,x)}} && {\mu^T_{(X,x)}=\mu^{(1)}_{(X,x)}} \end{array} \]
with $w$ as constructed in corollary(\ref{cor:phi-shreik-simple}).
\end{corollary}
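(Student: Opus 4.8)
The plan is to read corollary(\ref{cor:vexp-simple}) as the stabilised form of corollary(\ref{cor:induced-monad-very-explicit}): the extra hypothesis is precisely the hypothesis of proposition(\ref{prop:simple-coeq}), so in place of a large ordinal one is entitled to work at stage $1$. First I would fix a regular ordinal $n$ with $|n| \geq \lambda$ and apply corollary(\ref{cor:induced-monad-very-explicit}) verbatim, obtaining $T(X,x) = (Q_n(X,x), a(X,x)\phi)$, $\eta^T_{(X,x)} = \eta^{(n)}_{(X,x)}$ and $\mu^T_{(X,x)} = \mu^{(n)}_{(X,x)}$. Since $S$ and $S^2$ preserve the coequaliser of $\mu^S_X S(\phi_X)$ and $Sx$, corollary(\ref{cor:phi-shreik-simple}) applies and the sequence $(Q_m,q_m)$ stabilises at $1$, with $a(X,x)$ identified with the map $w$ determined by $wS(q_0) = q_0\mu^S_X$; this already pins down the underlying object $Q_1(X,x)$ and the $S$-action, hence the $M$-algebra $(Q_1(X,x), w\phi)$ of the statement.

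Next I would transport the stage-$n$ data back to stage $1$ using lemma(\ref{really-stable}). Stabilisation at $1$ yields an isomorphism of sequences whose components $i_m : Q_m(X,x) \to Q_1(X,x)$ are identities for $m \leq 1$ and are built from the now-invertible $q_m$ thereafter; in particular $i_n$ is an isomorphism, and because the $i_m$ are compatible with the $q_m$ and the $v_m$ it is an isomorphism of $S$-algebras carrying $(Q_n, a(X,x))$ to $(Q_1, w)$. For the unit this is immediate: the recursive clause $\eta^{(m+1)} = q_m\eta^{(m)}$ merely post-composes with isomorphisms once $m \geq 1$, so $i_n$ carries $\eta^{(n)}$ back to $\eta^{(1)}$, giving $\eta^T_{(X,x)} = \eta^{(1)}_{(X,x)}$ after the identification.

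The delicate point -- and the step I expect to be the main obstacle -- is the multiplication. Here $\mu^{(m)}$ has source $Q_m(Q_n(X,x), a(X,x)\phi)$ and is built by the recursion $\mu^{(m+1)}v_m = a(X,x)S(\mu^{(m)})$ riding on the $\phi_!$-construction applied to $T(X,x)$ itself, so to legitimately set $n=1$ one must also know that this second sequence, built over $(Q_1(X,x), w\phi)$, stabilises at $1$. I would handle this by checking that $S$ and $S^2$ preserving the base coequaliser of $(X,x)$ forces the analogous preservation for $T(X,x)$ -- equivalently, that past stage $1$ the $\mu$-recursion only re-expresses the map through the invertible $v_m$ -- and then running a transfinite induction parallel to the construction of the $\mu^{(m)}$ to show that the stabilisation isomorphisms intertwine $\mu^{(m)}$ with $\mu^{(1)}$. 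Transporting the stage-$n$ description of corollary(\ref{cor:induced-monad-very-explicit}) along $i_n$ then yields $\mu^T_{(X,x)} = \mu^{(1)}_{(X,x)}$, completing the identification claimed in the statement.
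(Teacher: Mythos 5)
Your reading of the corollary as ``corollary(\ref{cor:induced-monad-very-explicit}) made stable via proposition(\ref{prop:simple-coeq})'' is exactly the paper's (the paper offers no separate proof at all, presenting the statement as an immediate instantiation), and your treatment of the object and unit parts is fine: corollary(\ref{cor:phi-shreik-simple}) gives $T(X,x)=(Q_1(X,x),w\phi)$ directly, and since $\eta^{(m+1)}=q_m\eta^{(m)}$ with the $q_m$ invertible past stage $1$, the identification $\eta^T_{(X,x)}=\eta^{(1)}_{(X,x)}$ follows. You have also located the genuinely delicate point precisely: $\mu^{(1)}_{(X,x)}$ has source $Q_1(Q_1(X,x),w\phi)$, whereas $\mu^T_{(X,x)}$ has source the underlying object of $\phi_!(Q_1(X,x),w\phi)=T^2(X,x)$, so the formula $\mu^T=\mu^{(1)}$ only typechecks if the transfinite construction over the algebra $(Q_1(X,x),w\phi)$ itself stabilises at stage $1$.

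The gap is in how you propose to secure this. The claim that ``$S$ and $S^2$ preserving the base coequaliser of $(X,x)$ forces the analogous preservation for $T(X,x)$'' is asserted, not proved, and it does not follow from the hypothesis as you have read it: the second-level pair is $\mu^S_{Q_1}S(\phi_{Q_1}),\ S(w\phi_{Q_1}):SMQ_1\rightrightarrows SQ_1$, whose domain is $SM$ applied to $Q_1$, and preservation of the single coequaliser $q_0:SX\to Q_1$ gives no control whatsoever over how $S$, $S^2$, or $SM$ interact with this different colimit. Your parenthetical reformulation (``past stage $1$ the $\mu$-recursion only re-expresses the map through the invertible $v_m$'') is circular, since the invertibility of the $v_m$, $q_m$ of the \emph{second} sequence is exactly what is at stake. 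The way the corollary becomes true is not by proving such a propagation result but by applying the preservation hypothesis to the $M$-algebra $(Q_1(X,x),w\phi)$ as well as to $(X,x)$ --- this is evidently what the phrase ``for $(X,x)\in V^M$'' in the statement is meant to quantify over. With that reading, corollary(\ref{cor:phi-shreik-simple}) applied to $(Q_1(X,x),w\phi)$ yields $T^2(X,x)=Q_1(Q_1(X,x),w\phi)$ with coequalising map $q_0$, and then $\mu^T=\mu^{(1)}$ is immediate from uniqueness of maps induced by that coequaliser (both satisfy $(-)\circ q_0=w$); no transfinite induction intertwining $\mu^{(m)}$ with $\mu^{(1)}$ is needed. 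So your plan is simultaneously heavier than necessary and, at its crucial step, resting on an implication that is not available.
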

%

\section*{Acknowledgements}\label{sec:Acknowledgements}
We would like to acknowledge
Clemens Berger, Richard Garner, Andr\'{e} Joyal, Steve Lack, Joachim Kock, Jean-Louis Loday, Paul-Andr\'{e} Melli\`{e}s, Ross Street and Dima Tamarkin
for interesting discussions on the substance of this paper. We would also like to acknowledge the Centre de Recerca Matem\`{a}tica in Barcelona for the generous hospitality and stimulating environment provided during the thematic year 2007-2008 on Homotopy Structures in Geometry and Algebra.
The first author would like to acknowledge the financial support on different stages of this project of the Scott Russell Johnson Memorial Foundation, the Australian Research Council (grant No.DP0558372) and L'Universit\'{e} Paris 13.
The second author would like to acknowledge the support of the ANR grant no. ANR-07-BLAN-0142.
The third author would like to acknowledge the laboratory PPS (Preuves Programmes Syst\`{e}mes) in Paris, the Max Planck Institute in Bonn, the IHES and the Macquarie University Mathematics Department for the excellent working conditions he enjoyed during this project.

\end{document}